\documentclass[10pt, letterpaper,reqno]{amsart}
\usepackage[left=1in,right=1in,bottom=0.5in,top=0.8in]{geometry}
\usepackage{amsfonts}
\usepackage{amsmath, amssymb,mathtools}
\usepackage{graphicx}
\usepackage[font=small,labelfont=bf]{caption}
\usepackage{epstopdf}
\usepackage{xcolor}
\usepackage{amsthm}
\usepackage{float}
\usepackage{enumitem}
\usepackage{pgfplots}
\usepackage{listings}
\usepackage{longtable}
\usepackage{mathrsfs}
\usepackage{dsfont}
\usepackage{mathtools}
\usepackage{dsfont} 
\usepackage{tipa}
\usepackage{xfrac} 
\usepackage[normalem]{ulem}
\usepackage[pdfpagelabels,hyperindex]{hyperref}
\hypersetup{linkbordercolor=green}

\newcommand*{\rom}[1]{\expandafter\@slowromancap\romannumeral #1@}
\DeclareMathAlphabet{\mathpzc}{OT1}{pzc}{m}{it}
 \colorlet{lgray}{white!80!black}
\colorlet{lred}{white!85!red}
\colorlet{lgreen}{white!60!green}
\colorlet{dgreen}{black!30!green}
\colorlet{lpurple}{white!60!purple}
\colorlet{lblue}{white!60!blue}
\definecolor{green}{rgb}{0.1,0.8,0.1}
\definecolor{yellow}{rgb}{1.0,0.85,0.25}
\definecolor{purple}{rgb}{1.0, 0, 1.0}
\definecolor{blue}{rgb}{0, 0, 1.0} 
\tikzstyle{unfused}=[lgray, line width=1.5pt, ->]
\tikzstyle{fused}=[lgray, line width=4pt, ->]
\tikzstyle{dual}=[black, line width=1pt, dashed]
\tikzstyle{lightdual}=[black, line width=0.5pt, dashed]
\tikzstyle{cut}=[black, line width=1.0pt]

\theoremstyle{plain}
\newtheorem{theorem}{Theorem}[section]
\newtheorem{lemma}[theorem]{Lemma}
\newtheorem{proposition}[theorem]{Proposition}
\newtheorem{corollary}[theorem]{Corollary}

\theoremstyle{definition}
\newtheorem{definition}[theorem]{Definition}
\newtheorem{remark}[theorem]{Remark}

\numberwithin{equation}{section}
\pgfplotsset{compat=1.9} 
\DeclareFontFamily{U}{mathx}{}
\DeclareFontShape{U}{mathx}{m}{n}{<-> mathx10}{}
\DeclareSymbolFont{mathx}{U}{mathx}{m}{n}
\DeclareMathAccent{\widehat}{0}{mathx}{"70}
\DeclareMathAccent{\widecheck}{0}{mathx}{"71}  
\def \tw {w}
\def \bw {\overline{w}}

\def \bc {\overline{c}}
\def \C {\mathcal{C}}
\def \lb {\left(}
\def \rb {\right)}
\def \be {\begin{equation}}
\def \ee {\end{equation}}
\def \k {\kappa}
\def \de {\delta}
\def \e {\mathfrak{e}}
\def \f {\mathfrak{f}}
\def \E {\mathtt{E}}
\def \WO {\widetilde{\Omega}}
\def \ep {\varepsilon}
\def \aa {\mathbf{a}}
\def \bb {\mathbf{b}}
\def \cc {\mathbf{c}}
\def \dd {\mathbf{d}}
\def \eee{\mathbf{e}}
\def \fff{\mathbf{f}} 
\def \RR {\mathbb{R}}
\def \NN {\mathbb{N}_0}
\def \i {\mathtt{i}}
\def \fc{\mathfrak{c}}
\def \tc {\widetilde{c}}
\def \td {\widetilde{d}}
\def \bpi {\pi} 
\def \bP {P} 
\def \ll{\langle}
\def \rr{\rangle}
\def \p{\mathbf{p}}
\def \x {\mathbf{x}}
\def \y {\mathbf{y}} 
\def \ep {\varepsilon}
\def \D {\mathbf{D}}
\def \E {\mathbf{E}} 
\def \tE {\mathtt{E}}
\def \xx {\mathtt{x}}

\def \d {\mathsf{d}}  
\def \B {\mathbb{B}}
\def \one {\mathds{1}}
\def \c {\boldsymbol{c}}
\def \xx {\boldsymbol{x}}
\def \ph {\Phi}
\def \le {\left[}
\def \re {\right]}
\def \pp {\mathfrak{p}}
\def \bI {\mathbf{I}}
\def \co {\eta^A}
\def \pone {\partial_{t=1}}
\def \al {\theta}
\def \nal {n^{\theta}}

\def\floor#1{\left\lfloor #1 \right\rfloor}
\DeclareMathOperator{\dehp}{DEHP}
\DeclareMathOperator{\usw}{USW}
\DeclareMathOperator{\res}{Res}
\DeclareMathOperator{\LL}{L}
\DeclareMathOperator{\HH}{H}
\DeclareMathOperator{\CL}{CL}
\DeclareMathOperator{\cont}{cont} 
\newcommand{\upperRomannumeral}[1]{\uppercase\expandafter{\romannumeral#1}}
\usepgflibrary{fpu}
\makeatletter
\raggedbottom
\let\NAT@parse\undefined
\makeatother

\title{Askey--Wilson signed measures and open ASEP in the shock region}
\author{Yizao Wang}
\address
{
Yizao Wang\\
Department of Mathematical Sciences\\
University of Cincinnati\\
2815 Commons Way\\
Cincinnati, OH, 45221-0025, USA.
}
\email{yizao.wang@uc.edu}
\author{Jacek Weso\l owski}
\address{ 
Jacek Weso\l owski\\
 Faculty of Mathematics and Information Science\\
Warsaw University of Technology and Statistics\\
Poland,
Warszawa, Poland} 
\email{jacek.wesolowski@pw.edu.pl} 
\author{Zongrui Yang}
\address
{
Zongrui Yang\\
Department of Mathematics\\
Columbia University\\
2990 Broadway\\
New York, NY, 10027-7110, USA.
}
\email{zy2417@columbia.edu}
 
\begin{document}
\begin{abstract}
We introduce a family of  multi-dimensional Askey--Wilson signed measures. We offer an  explicit description of the stationary measure of the open asymmetric simple exclusion process (ASEP) in the full phase diagram, in terms of integrations with respect to these Askey--Wilson signed measures. Using our description, we   provide a rigorous derivation of the density profile and limit fluctuations of open ASEP  in the entire shock region, including the high and low density phases as well as the coexistence line. This in particular confirms the existing physics postulations of the density profile.
\end{abstract}
\maketitle
\section{Introduction and main results}
\subsection{Preface}
The open asymmetric simple exclusion process (ASEP) is a paradigmatic model for non-equilibrium systems with open boundaries and, asymptotically, for KPZ universality. In the last 30 years, extensive studies have been devoted to understanding its stationary measure through the `matrix product ansatz' approach introduced in the seminal work \cite{derrida1993exact} by  B. Derrida,
 M. Evans, V. Hakim and V. Pasquier. Based on the relations with 
 Askey--Wilson polynomials found in \cite{uchiyama2004asymmetric} by T. Sasamoto, M. Uchiyama and M.~Wadati, 
\cite{bryc2017asymmetric} expressed the joint generating function of the stationary measure in terms of expectations of the Askey--Wilson processes introduced in \cite{bryc2010askey}.
The   phase diagram of open ASEP consists of the fan region and the shock region, and 
the explicit characterization by \cite{bryc2017asymmetric}
 is only available in the fan region because of certain constraints on parameters which guarantee the positivity of the Askey--Wilson measures.
This is a powerful method that allows the rigorous derivations of  many   asymptotics (in the macroscopic scale) of the open ASEP stationary measure in the fan region, including the large deviation \cite{bryc2017asymmetric} and density profile and limit fluctuations \cite{bryc2019limit}. By heavily exploiting  this method, the stationary measure of open KPZ equation was first described by \cite{corwin2021stationary} under boundary parameter range $u+v\geq 0$ (which come from the fan region of open ASEP under the weakly asymmetric scaling \cite{corwin2018open}), see  also related works \cite{bryc2021markov,bryc2021markov2,barraquand2021steady} and review \cite{corwin2022some}. 

In the shock region, however, the macroscopic asymptotic behavior of stationary measure is less rigorously understood. The shock region consists of portions of the high and low density phases, as well as the coexistence line. On the high and low density phases, it is widely accepted in the physics literature that the density profile is a constant, depending on the bulk and boundary parameters (for a non-exhaustive list, see \cite{derrida1993exact,schutz1993phase,sasamoto2000density,derrida2003exact,uchiyama2004asymmetric} and references in  surveys \cite{blythe2007nonequilibrium,corwin2022some}).
We are unable to find previous results on the second order fluctuation limits of open ASEP stationary measure in the shock region. On the coexistence line, the density profile is a random process, depending on the random position of the shock that is uniformly distributed along the system. 
 This density profile was predicted in physics works \cite{derrida1993exact,schutz1993phase,essler1996representations,mallick1997finite,derrida2002exact,derrida2003exact} utilizing the matrix product ansatz and  physical heuristics, assuming different special parameter conditions.



In this paper we offer an explicit description of the open ASEP stationary  
measures
 in  full phase diagram, by modifying and extending the techniques in \cite{bryc2010askey,bryc2017asymmetric}.
This description puts both the fan region and shock region in a unified framework, which enables  rigorous derivations of the asymptotics.
Instead of expectations of Askey--Wilson processes, we describe the joint generating  
functions
 of the stationary measure in terms of integrations with respect to certain multi-dimensional Askey--Wilson signed measures. 
To demonstrate the usefulness of this method, we investigate the density profiles and fluctuations in the shock region. 
Our results confirm the constant and random density profiles postulated in the aforementioned physics literature, which apply respectively to the high and low-density phases, as well as on the coexistence line. We also obtain the limit fluctuations on the high and low density phases, which are the same as in the fan region \cite{bryc2019limit}, given by Brownian motions. To our knowledge, our work here is the first rigorous derivation of macroscopic density profiles and fluctuation limits for open ASEP in the entire shock region.


We expect that the explicit description given in this paper should have other applications. Many other most commonly investigated statistics could be studied by this method, including currents, multi-point correlation functions and large deviations. By taking different scaling limits, one could possibly compute the multi-point Laplace transform of open KPZ equation stationary measure and the (conjectural) open KPZ fixed point stationary measure, extending the formulas respectively in \cite{corwin2021stationary} and \cite{bryc2022asymmetric} for $u+v\geq0$ to the full phase diagram. Then it remains the work of inverting those multi-point Laplace transforms as in \cite{bryc2021markov,barraquand2021steady,bryc2021markov2} and proving the conjectural descriptions in \cite{barraquand2021steady}.



\subsection{Backgrounds} 
The open asymmetric simple exclusion process (ASEP) is a continuous-time irreducible Markov process on state space $\{0,1\}^n$ with parameters
\be\label{eq:conditions open ASEP}
\alpha,\beta>0,\quad \gamma,\de\geq 0,\quad 0\leq q<1,
\ee  
which models the evolution of particles on the lattice $\left\{1,\dots,n\right\}$. Particles are allowed to move to its nearest left/right neighbors and can also enter or exit the system at two boundary sites $1, n$. Specifically, particles move at random to the left with rate $q$ and to the right with rate $1$, but a move is prohibited (excluded) if the target site is already occupied. Particles enter at random the system and are placed at site $1$ with rate $\alpha$ and at site $n$ with rate $\delta$, provided that the site is empty. Particles are also removed at random with rate $\gamma$ from site $1$ and with rate $\beta$ from site $n$. These jump rates are summarized in Figure \ref{fig:openASEP}.  
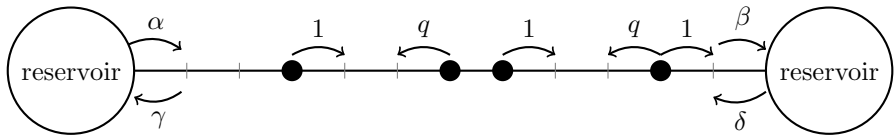
\begin{figure}[h]
\centering
\begin{tikzpicture}[scale=0.7]
\draw[thick] (-1.2, 0) circle(1.2);
\draw (-1.2,0) node{reservoir};
\draw[thick] (0, 0) -- (12, 0);
\foreach \x in {1, ..., 12} {
	\draw[gray] (\x, 0.15) -- (\x, -0.15);
}
\draw[thick] (13.2,0) circle(1.2);
\draw(13.2,0) node{reservoir};
\fill[thick] (3, 0) circle(0.2);
\fill[thick] (6, 0) circle(0.2);
\fill[thick] (7, 0) circle(0.2);
\fill[thick] (10, 0) circle(0.2);
\draw[thick, ->] (3, 0.3)  to[bend left] node[midway, above]{$1$} (4, 0.3);
\draw[thick, ->] (6, 0.3)  to[bend right] node[midway, above]{$q$} (5, 0.3);
\draw[thick, ->] (7, 0.3) to[bend left] node[midway, above]{$1$} (8, 0.3);
\draw[thick, ->] (10, 0.3) to[bend left] node[midway, above]{$1$} (11, 0.3);
\draw[thick, ->] (10, 0.3) to[bend right] node[midway, above]{$q$} (9, 0.3);
\draw[thick, ->] (-0.1, 0.5) to[bend left] node[midway, above]{$\alpha$} (0.9, 0.4);
\draw[thick, <-] (0, -0.5) to[bend right] node[midway, below]{$\gamma$} (0.9, -0.4);
\draw[thick, ->] (12, -0.4) to[bend left] node[midway, below]{$\de$} (11, -0.5);
\draw[thick, <-] (12, 0.4) to[bend right] node[midway, above]{$\beta$} (11.1, 0.5);
\end{tikzpicture}
\caption{Jump rates in the open ASEP.}
\label{fig:openASEP}
\end{figure} 

Throughout the paper we assume condition \eqref{eq:conditions open ASEP} and work with the following parameterization: We set
 $$
\k_{\pm}(x,y)=\frac{1}{2x}\lb1-q-x+y\pm\sqrt{(1-q-x+y)^2+4xy}\rb, \quad \mbox{for }\; x>0\mbox{ and }y\geq 0,
$$ 
and denote
\be\label{eq:defining ABCD}
A=\k_+(\beta,\de),\quad B=\k_-(\beta,\de),\quad C=\k_+(\alpha,\gamma),\quad D=\k_-(\alpha,\gamma).
\ee
The quantities $\frac{A}{1+A}$ and $\frac{1}{1+C}$ defined by the parameters above have nice
physical interpretations as the `effective densities' near the left and right boundaries of the system, see for example the review paper \cite[Section 6.2]{corwin2022some}.

One can check that \eqref{eq:defining ABCD} gives a bijection between \eqref{eq:conditions open ASEP} and 
\be\label{eq:conditions qABCD}
A,C\geq0,\quad -1<B,D\leq 0,\quad 0\leq q<1.
\ee 
We will assume \eqref{eq:conditions open ASEP} and consequently,  \eqref{eq:conditions qABCD} throughout the paper. 

It has been known since \cite{derrida1993exact} that the phase diagram of open ASEP involves only two boundary parameters $A,C$, and exhibits three phases (see Figure \ref{fig:phase diagram density fluctuation}):
\begin{enumerate}
        \item [$\bullet$] (maximal current phase) $A<1$, $C<1$,
        \item [$\bullet$] (high density phase) $A>1$, $A>C$,
        \item [$\bullet$] (low density phase) $C>1$, $C>A$.
    \end{enumerate}
The boundary $A=C>1$ between the high and low density phases is called the coexistence line.
    
There are also two regions on the phase diagram distinguished by \cite{derrida2002exact,derrida2003exact}: 
    \begin{enumerate}
        \item [$\bullet$] (fan region) $AC<1$,
        \item [$\bullet$] (shock region) $AC>1$.
    \end{enumerate}

We denote by $\mu_n$ the (unique) stationary measure of open ASEP, which is a probability measure on $(\tau_1,\dots,\tau_n)\in\left\{0,1\right\}^n$, where
$\tau_i\in\left\{0,1\right\}$ is the occupation variable on site $i$, for $1\leq i\leq n$.
A `matrix product ansatz' characterization of the stationary measure $\mu_n$ was given in the seminal work \cite{derrida1993exact} by B. Derrida, M. Evans, V. Hakim and V. Pasquier:  
Assume  there are matrices $\D$, $\E$, a row vector $\ll W|$ and a column vector $|V\rr$ with the same (possibly infinite) dimension, satisfying the following:
\be\label{eq:DEHP algebra} 
        \D\E-q\E\D=\D+\E, \quad
        \ll W|(\alpha\E-\gamma\D)=\ll W|, \quad
        (\beta\D-\de\E)|V\rr=|V\rr 
 \ee
    (which is commonly referred to as the $\dehp$ algebra). Then
for any $t_1,\dots,t_n>0$, we have  
 \be \label{eq:MPA open ASEP}
    \mathbb{E}_{\mu_n}\le\prod_{i=1}^nt_i^{\tau_i}\re
    =\frac{\ll W|(\E+t_1\D)\times\dots\times(\E+t_n\D)|V\rr}{\ll W|(\E+\D)^n|V\rr},
    \ee 
assuming that the denominator $\ll W|(\E+\D)^n|V\rr$ is nonzero. 
 
 To make use of the matrix product ansatz \eqref{eq:MPA open ASEP} for the stationary measure of an open ASEP, one needs to 
 find concrete representations of \eqref{eq:DEHP algebra}.  
Over time various  
representations
have been discovered for different parameters $(q,\alpha,\beta,\gamma,\delta)$.
See for example \cite{derrida1993exact,uchiyama2004asymmetric,enaud2004large,sasamoto1999one,sandow1994partially,blythe2000exact,essler1996representations} for infinite dimensional representations and \cite{mallick1997finite,essler1996representations} for finite dimensional ones.
Some of these representations enable calculations of several asymptotics for open ASEP.
One such representation for general parameters $(q,\alpha,\beta,\gamma,\delta)$  
obtained in the seminal (physics) work \cite{uchiyama2004asymmetric} by M. Uchiyama, T. Sasamoto and M. Wadati, was related to the Askey--Wilson orthogonal polynomials (with parameters $(A,B,C,D,q)$ depending on the open ASEP parameters via \eqref{eq:defining ABCD}; see Section \ref{subsec:Askey--Wilson polynomials} and Section \ref{subsubsec:USW representation} for a brief review). It  is often referred to as the USW representation. Using this representation, the  $n\rightarrow\infty$ asymptotics of the mean and variance of particle density and mean current was studied therein.

 In a different line of research, \cite{bryc2010askey}   observed that the Askey--Wilson polynomials with parameters $(A,B,C,D,q)$, under suitable conditions, are  orthogonal with respect to a unique compactly supported probability measure on $\RR$, referred to as the Askey--Wilson measure. Then, \cite{bryc2010askey} introduced the Askey--Wilson process with parameters $(A,B,C,D,q)$ as a time-inhomogeneous Markov process with specific one-dimensional marginal laws and transition probabilities, denoted by $\pi_t(\d y)$ and $P_{s,t}(x,\d y)$, given by the Askey--Wilson measures depending on $A,B,C,D,q$ and $s,t,x$ (for explicit expressions, see \eqref{eq:marginal AW signed} and \eqref{eq:transition AW signed}). All these are independent from the investigations of open ASEP, but instead following earlier developments on so-called quadratic harnesses, see for example \cite{bryc2007quadratic,bryc2008bi,bryc2005conditional,bryc2015infinitesimal}. 

 Next, when the Askey--Wilson process was related to open ASEP via \eqref{eq:defining ABCD} in \cite{bryc2017asymmetric}, it turned out that the restrictions on parameters that guarantee the existence of Askey--Wilson process become $AC<1$, corresponding exactly to the fan region of the open ASEP. 
By combining several properties of Askey--Wilson polynomials and processes, under the $\usw$ representation of $\D,\E,\ll W|$ and $|V\rr$, the joint  generating function of open ASEP stationary measure (given in \eqref{eq:MPA open ASEP}) was characterized as: For $0<t_1\leq\dots\leq t_n$,
\be\label{eq:MPA as AW process}
(1-q)^n\ll W|(\E+t_1\D)\times\dots\times(\E+t_n\D)|V\rr=\mathbb{E}\lb\prod_{i=1}^n(1+t_i+2\sqrt{t_i}\,Y_{t_i})\rb
\ee
where $\lb Y_t\rb_{t\geq 0}$ is the 
Askey--Wilson
 process with parameters $(A, B, C, D, q)$.

The identity \eqref{eq:MPA as AW process} is a powerful representation which made it possible to have rigorous analysis of many asymptotics of open ASEP stationary measure in the fan region, including large deviation in \cite{bryc2017asymmetric} and density profile and limit fluctuations in \cite{bryc2019limit}.
By (heavily) exploiting this method, the stationary measure of many other models in the KPZ class can also be studied. The stationary measure of open KPZ equation on $[0,1]$ with general Neumann boundary conditions was first constructed by \cite{corwin2021stationary} under boundary parameter range $u+v\geq 0$ (which come from the fan region of open ASEP under the weakly asymmetric scaling \cite{corwin2018open}), see  also related works \cite{bryc2021markov,bryc2021markov2,barraquand2021steady} and review \cite{corwin2022some}. The scaling convergence of open ASEP stationary measure to the (conjectural) stationary
measure of the open KPZ fixed point (postulated in \cite{barraquand2021steady}) was proved in \cite{bryc2022asymmetric} under $u+v\geq 0$. The stationary measure of the `six-vertex model' on a strip with two open boundaries was studied in \cite{yang2022stationary}.

\subsection{Main results}
 Note that the right-hand side of \eqref{eq:MPA as AW process} can be written as an integral. Namely, for $\pi_t(\d y)$ and $P_{s,t}(x,\d y)$  as the one-dimensional marginal laws and transitional probabilities of the Askey--Wilson process $Y$, they determine the multi-dimensional marginal laws of the Askey--Wilson process: 
\[
\pi_{t_1,\dots,t_n}(\d x_1,\dots,\d x_n):=\pi_{t_1}(\d x_1)P_{t_1,t_2}(x_1,\d x_2)\dots P_{t_{n-1},t_n}(x_{n -1},\d x_n).
\] 
In this way, the right-hand side of \eqref{eq:MPA as AW process} can be written as an integration of the measure $\pi_{t_1,\dots,t_n}$. 

Our first contribution is to show that in the shock region, one can still find $\pi_t(\d y)$ and $P_{s,t}(x,\d y)$ as Askey--Wilson {\em signed} measures, define $\pi_{t_1,\dots,t_n}$ accordingly, and establish a counterpart of \eqref{eq:MPA as AW process}. 
 Namely, it turned out that the formulae for $\pi_t$ and $P_{s,t}$ in \cite{bryc2017asymmetric} can be extended to the region $AC>1$ to correspond to certain signed measures (see \eqref{eq:marginal AW signed} and \eqref{eq:transition AW signed} below), and they are exactly the measures to work with in the shock region. 
 All our analysis later shall be based on $\pi_{t_1,\dots,t_n}$, which now becomes a natural extension to the earlier studies. The analysis becomes also more involved because the measures are no longer probability ones. We provide the basic tools related to Askey--Wilson signed measures in Section \ref{sec:AW polynomials and signed measures}. We actually see more similarities between the fan and the shock regions. 
For example, $\{P_{s,t}\}_{s<t}$ 
satisfy the Chapman--Kolmogorov equation {\em formally} (these measures are with  total mass one but they have strictly negative parts when $AC>1$). Obviously, an interpretation of the corresponding Askey--Wilson Markov process is lost. 

 Working with the extended formulae for $\pi_t$ and $P_{s,t}$, the first main theorem provides an explicit and unified description 
of the open ASEP stationary measure in the full parameter space in terms of multiple integrals with respect to certain multi-dimensional Askey--Wilson signed measures. We  only need one technical constraint $ABCD\notin\{q^{-l}:l\in\NN\}$.  In this paper we use the convention that $\NN=\{0,1,\ldots\}$.

\begin{theorem}\label{thm: stationary measure in terms of Askey--Wilson integral}
Assume \eqref{eq:conditions qABCD} and $ABCD\notin\{q^{-l}:l\in\NN\}$.   
Then there exists a polynomial $\Pi_n$ of $n$ variables $t_1,\dots,t_n$ with coefficients depending on $A,B,C,D,q$, such that for any $t_1,\dots,t_n>0$, 
\be \label{eq:MPA open ASEP1}
    \mathbb{E}_{\mu_n}\le\prod_{i=1}^nt_i^{\tau_i}\re
    =\frac{\Pi_n\lb t_1,\dots,t_n\rb}{\Pi_n(1,\dots,1)}.
\ee
There exists an open interval $I$ containing $(1-\ep,1)$ for some $\ep>0$, such that for any $t_1\leq\dots\leq t_n$ in $I$,
\be\label{PIN}
    \Pi_n\lb t_1,\dots,t_n\rb=\int_{\RR^n}\prod_{i=1}^n(1+t_i+2\sqrt{t_i}x_i)\pi_{t_1,\dots,t_n}(\d x_1,\dots,\d x_n),\ee
where $\pi_{t_1,\dots,t_n}$ is a finite signed measure  (depending on $q,A,B,C,D$ and  defined in Section \ref{subsec:The multi-time Askey--Wilson signed measures}), which is compactly supported in $\RR^n$ and has a total mass $1$
(i.e. $\int_{\RR^n}\pi_{t_1,\dots,t_n}(\d x_1,\dots,\d x_n)=1$).

Furthermore, when we (additionally) assume 
$A/C\notin\{q^l:l\in\mathbb{Z}\}$ if $A,C\geq 1$,
then   the open interval $I$ above can be chosen such that $1\in I$. \end{theorem}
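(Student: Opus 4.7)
The natural plan is to build everything on the $\usw$ representation of the $\dehp$ algebra \eqref{eq:DEHP algebra}, in which $\D,\E$ act as tridiagonal operators on $\ell^2(\NN)$ whose basis is indexed by the Askey--Wilson polynomials with parameters $(A,B,C,D,q)$. First I would set
$$\Pi_n(t_1,\dots,t_n):=(1-q)^n\,\ll W|(\E+t_1\D)\cdots(\E+t_n\D)|V\rr$$
in this representation. Since each factor $\E+t_i\D$ is linear in $t_i$, $\Pi_n$ is visibly a polynomial of degree at most one in each variable once convergence of the (a priori infinite) matrix product is checked. The nonvanishing of $\Pi_n(1,\dots,1)=(1-q)^n\ll W|(\D+\E)^n|V\rr$ should reduce to nonvanishing of Askey--Wilson normalization constants whose denominators involve $q$-Pochhammer factors such as $(q^j ABCD;q)_\infty$; the hypothesis $ABCD\notin\{q^{-l}:l\in\NN\}$ is exactly what keeps these nonzero. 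The identity \eqref{eq:MPA open ASEP1} is then a direct restatement of \eqref{eq:MPA open ASEP}.

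For the integral representation \eqref{PIN}, the plan is to exploit that in the Askey--Wilson diagonalization the symbol of $(1-q)(\E+t\D)$ equals $1+t+2\sqrt{t}\,x$ after the substitution $x=\cos\theta$. Inserting a complete system of eigenvectors between consecutive factors $(\E+t_i\D)(\E+t_{i+1}\D)$ produces the Askey--Wilson transition kernel $P_{t_i,t_{i+1}}$, while the outer brackets $\ll W|\cdot|V\rr$ produce the marginal $\pi_{t_1}$; the product then telescopes into the multiple integral in \eqref{PIN} against $\pi_{t_1,\dots,t_n}$. In the fan region $AC<1$ this recovers the identity \eqref{eq:MPA as AW process} of \cite{bryc2017asymmetric}; the crucial observation to leverage is that the explicit formulas \eqref{eq:marginal AW signed} and \eqref{eq:transition AW signed} extend verbatim throughout \eqref{eq:conditions qABCD}, only now defining finite signed measures of compact support. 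Total mass $1$ of $\pi_{t_1,\dots,t_n}$ then follows by integrating the constant function $1$ and using the formal Chapman--Kolmogorov relations $\pi_t(\RR)=1$ and $P_{s,t}(x,\RR)=1$ inherited from the fan region.

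The delicate point, which I expect to be the main obstacle, is the choice of the open interval $I$. Both $\pi_t$ and $P_{s,t}$ consist of an absolutely continuous piece supported on $[-1,1]$ together with finitely many atoms, whose locations depend rationally on $\sqrt{t},\sqrt{s},A,B,C,D$. For $t_i$ slightly below $1$ these atoms sit at distinct, uniformly bounded positions and none of them collides, so the construction of $\pi_{t_1,\dots,t_n}$ in Section \ref{subsec:The multi-time Askey--Wilson signed measures} is well posed and the integral in \eqref{PIN} converges absolutely; this produces an interval of the form $(1-\ep,1)\subseteq I$. To include the endpoint $t_i=1$ under the extra hypothesis, I would analyze the limit $t_i\uparrow 1$ of the atom positions: when $A,C\geq 1$, a direct inspection shows that the only way two atoms can coalesce (or an atom escape the support) is through a resonance of the form $A/C=q^l$ for some $l\in\mathbb{Z}$, and ruling out exactly this coincidence permits the signed measures to extend continuously across $t_i=1$. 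The identity \eqref{PIN} at $t_i=1$ then follows by continuity from the case already established on $(1-\ep,1)$.
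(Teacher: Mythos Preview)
Your outline follows the same route as the paper, and your analysis of the interval $I$ is accurate: Proposition \ref{prop:open ASEP parameters in Omega} verifies exactly the non-collision conditions you describe. The place where your description is too heuristic is the passage from the matrix product to the integral. ``Inserting a complete system of eigenvectors between consecutive factors'' is a spectral-theory picture that is meaningful when $\pi_t$ and $P_{s,t}$ are probability measures (the fan region), but in the shock region there is no Hilbert-space structure to invoke, and one needs a purely algebraic replacement.

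The paper's mechanism is to work with the row vector $\ll\p_t(x)| := (p_0(x;t), p_1(x;t),\dots)$ where $p_m(x;t) = t^{m/2}\bw_m(x;A\sqrt{t},B\sqrt{t},C/\sqrt{t},D/\sqrt{t})$. Three ingredients replace your spectral insertion: (i) the three-term recurrence, giving $\ll\p_t(x)|(1-q)(\E+t\D)=(1+t+2\sqrt{t}x)\ll\p_t(x)|$; (ii) orthogonality of $\bw_m$ against the \emph{signed} measure $\pi_t$ (Theorem \ref{thm:extending orthogonality}), yielding $\int\pi_t(\d x)\ll\p_t(x)|=\ll W|$; and (iii) the \emph{projection formula} $\int P_{s,t}(x,\d y)\,p_m(y;t)=p_m(x;s)$ (Proposition \ref{prop:projection formula}), which moves the polynomial vector from time $t$ back to time $s$ and is what actually produces the kernel $P_{t_i,t_{i+1}}$ between consecutive factors. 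Step (iii) is the non-obvious one: its proof uses a connection-coefficient identity for Askey--Wilson polynomials from \cite{askey1985some} together with an induction on $m$, and has to be carried out directly for the signed kernels since no probabilistic (martingale/Markov) argument is available. Your proposal does not single out this lemma, and without it the peeling of factors from left to right is not justified.
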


    
 
\begin{remark}
    As we will see in the proof (in Section \ref{sec:Proof of Theorem stationary measure in terms of Askey--Wilson integral}), the polynomial $\Pi_n\lb t_1,\dots,t_n\rb$ is actually given by the LHS of \eqref{eq:MPA as AW process}:
    $$
    \Pi_n\lb t_1,\dots,t_n\rb=(1-q)^n\ll W|(\E+t_1\D)\times\dots\times(\E+t_n\D)|V\rr,
    $$
for $\D$, $\E$, $\ll W|$ and $|V\rr$ satisfying the $\dehp$ algebra. By \cite[Appendix A]{mallick1997finite} this quantity does not depend on the specific representation of the $\dehp$ algebra and in particular we will use the USW representation \cite{uchiyama2004asymmetric} (see Section \ref{subsubsec:USW representation}) in the proof. 
It has been noted in \cite{mallick1997finite,essler1996representations} that the matrix product ansatz \eqref{eq:MPA open ASEP1} does not work (i.e. the denominator $\Pi_n(1,\dots,1)$ may equal to zero) when $ABCD=q^{-l}$ for some $l\in\NN$. Such cases are referred to as the `singular' cases of the matrix ansatz, for which an alternative
method is developed in \cite{bryc2019matrix}. In this paper we only deal with the `non-singular' case, thus assuming $ABCD\notin\{q^{-l}:l\in\NN\}$ in Theorem \ref{thm: stationary measure in terms of Askey--Wilson integral}.  
\end{remark}
\begin{remark}
We reiterate that when $AC<1$, the expression \eqref{PIN} has a probabilistic interpretation (see \eqref{eq:MPA as AW process}) and has been established in \cite{bryc2017asymmetric}. The probabilistic interpretation of the integral is lost when $AC>1$. When $AC=1$, the stationary measure $\mu_n$ becomes a product of i.i.d. Bernoulli random variables $\tau_1,\dots,\tau_n$ with mean $A/(1+A)=1/(1+C)$, see for example \cite[Appendix A]{enaud2004large}.
\end{remark} 

As an application, we fix all other parameters $q,\alpha,\beta,\gamma,\delta$ and take $n\rightarrow\infty$ in the open ASEP stationary measure. In the next two theorems, we study the first and second order asymptotics on the high and low density phases and the coexistence line.  

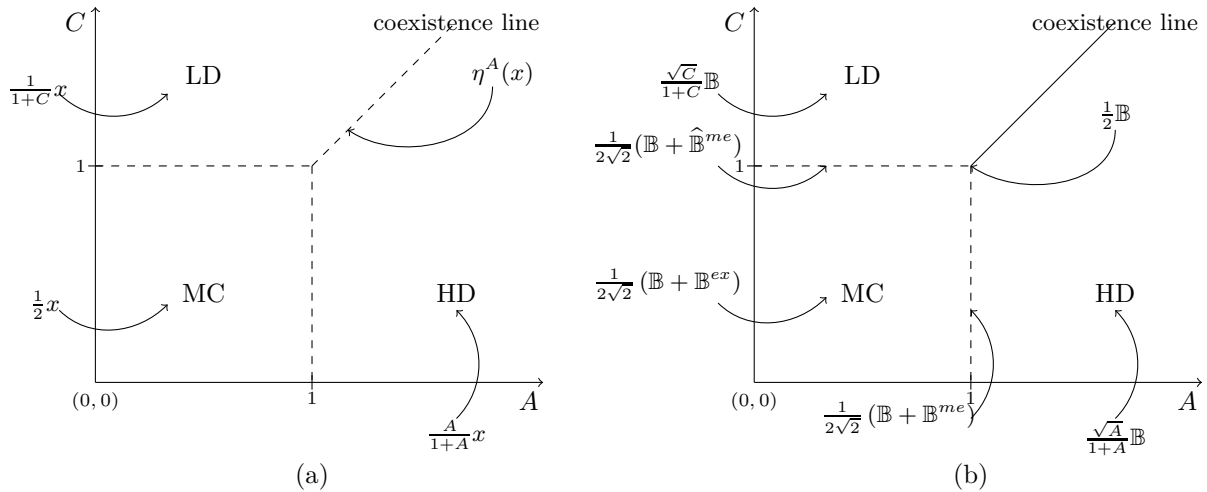
\begin{figure} 
\usetikzlibrary{patterns}
    \centering
    \begin{tikzpicture}[scale=0.96] 
 \draw[->] (5,5) to (5,10.2);
 \draw[->] (5.,5) to (11.2,5);
   \draw[-, dashed] (5,8) to (8,8);
   \draw[-, dashed] (8,8) to (8,5);
   \draw[-, dashed] (8,8) to (10,10);
   \node [left] at (5,8) {\scriptsize$1$};
   \node[below] at (8,5) {\scriptsize $1$};
     \node [below] at (11,5) {$A $};
   \node [left] at (5,10) {$C $};  
  \node [above] at (4.2,8.7) {\small$\frac{1}{1+C}x$};  
  \draw[-] (8,4.9) to (8,5.1);
   \draw[-] (4.9,8) to (5.1,8); 
 \node [below] at (5,5) {\scriptsize$(0,0)$}; ;
     \draw[->] (4.5,9) to [out=-45,in=-135] (6,9);
      \node [below] at (10,4.6) {\small$\frac{A}{1+A}x$}; 
       \draw[->] (10,4.5) to [out =45,in= -45] (10,6) ;
           \node [above] at (6.5,9) {LD}; 
    \node [below] at (10,6.5) {HD};  
     \node [below] at (6.5,6.5) {MC};  
     \draw[->] (4.5,6) to [out=-45,in=-135] (6,6.08);
  \node [above] at (4.3,5.75) {\small$\frac{1}{2}x$};
  \draw[->] (10.5,9.1) to [out=-90,in=-35] (8.5,8.5);
  \node at (10.64,9.3) {\small $\eta^A(x)$};
  \node at (10,10) {\small coexistence line };
  \node at (8,3.7) {(a)};
\end{tikzpicture}
\quad
\begin{tikzpicture}[scale=0.96] 
 \draw[->] (5,5) to (5,10.2);
 \draw[->] (5.,5) to (11.2,5);
   \draw[-, dashed] (5,8) to (8,8);
   \draw[-, dashed] (8,8) to (8,5); 
   \draw[-] (8,8) to (10,10);
   \node [left] at (5,8) {\scriptsize$1$};
   \node[below] at (8,5) {\scriptsize $1$};
     \node [below] at (11,5) {$A$};
   \node [left] at (5,10) {$C$}; 
    \node [above] at (6.5,9) {LD}; 
    \node [below] at (10,6.5) {HD};  
     \node [below] at (6.5,6.5) {MC};   
     \node at (10,10) {\small coexistence line }; 
 \draw[->] (4.5,8) to [out=-45,in=-135] (6,8);
  \node [above] at (3.8,7.9) {\small $\frac{1}{2\sqrt{2}}(\mathbb B+\widehat{\mathbb B}^{me})$}; 
  \node[left] at (8.2,4.5) {\small $\frac{1}{2\sqrt{2}}\lb\mathbb B+{\mathbb B}^{me}\rb$};
 \draw[->] (8,4.5) to [out =45,in= -45] (8,6) ;
  \draw[-] (8,4.9) to (8,5.1);
   \draw[-] (4.9,8) to (5.1,8); 
 \node [below] at (5,5) {\scriptsize$(0,0)$};
     \node [above] at (4.1,8.8){ \small $\frac{\sqrt{C}}{1+C}\mathbb B$};  
     \draw[->] (4.5,9) to [out=-45,in=-135] (6,9); 
     \draw[->] (10,4.5) to [out =45,in= -45] (10,6) ;
     \node [below] at (10,4.6) { \small $\frac{\sqrt{A}}{1+A}\mathbb B$}; 
     \draw[->] (4.5,6.1) to [out=-45,in=-135] (6,6.18);
  \node [above] at (3.8,6) {\small $\frac{1}{2\sqrt{2}}\lb\mathbb B+ {\mathbb B}^{ex}\rb$}; 
  \node at (8,3.7) {(b)};
  \draw[->] (10,8.5) to [out=-90,in=-35] (8,8);
  \node at (10,8.7) {\small $\frac{1}{2}\mathbb B$};  
\end{tikzpicture}
    \caption{Phase diagrams for the open ASEP stationary measures. LD, HD, MC respectively stand for the low density, high density and maximal current phases.
    The asymptotic density profile in different phases are indicated in  (a) and  fluctuation limits are indicated in (b). Part (b) complements Figure 2 of \cite{bryc2019limit} with fluctuations for the shock region. Processes $\mathbb{B}$, ${\mathbb B}^{ex}$, ${\mathbb B}^{me}$ and $\widehat{\mathbb B}^{me}$ respectively stand for the Brownian motion, excursion, meander, and reversed meander, see for example \cite{bryc2019limit} for their definitions. Processes in the sums are assumed to be independent. Note that $A=C>1$ is excluded from (b) since  on the coexistence line, the  asymptotic density profile becomes random, see \eqref{eq:coexistence line process}, and  it is not clear how to make sense of  fluctuations therein.}
    \label{fig:phase diagram density fluctuation}
\end{figure}

\begin{theorem}\label{thm:fluctuations}  
We introduce the centered height functions:
\be  \label{eq:centered height functions}
h_n^{\HH}(x)=\sum_{i=1}^{\floor{nx}}\lb \tau_i-\frac{A}{1+A}\rb,\quad
h_n^{\LL}(x)=\sum_{i=1}^{\floor{nx}}\lb \tau_i-\frac{1}{1+C}\rb.
\ee
In the high density phase,  i.e. $A>C$ and $A>1$, we have, as $n\rightarrow\infty$,
  \be \label{eq:HD fluc}
   \frac{1}{\sqrt{n}} \{ h^{\HH}_n(x)\}_{x\in[0,1]}\stackrel{\mathrm{f.d.d.}}{\Longrightarrow}   \frac{\sqrt{A}}{1+A}\{\B(x)\}_{x\in[0,1]}. 
  \ee
In the low density phase,  i.e. $C>A$ and $C>1$, we have, as $n\rightarrow\infty$,
  \be \label{eq:LD fluc}
   \frac{1}{\sqrt{n}} \{ h^{\LL}_n(x)\}_{x\in[0,1]}\stackrel{\mathrm{f.d.d.}}{\Longrightarrow}   \frac{\sqrt{C}}{1+C}\{\B(x)\}_{x\in[0,1]}. 
  \ee
 \end{theorem}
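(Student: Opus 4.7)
The plan is to establish the finite-dimensional convergence via joint moment generating functions, using the integral representation from Theorem~\ref{thm: stationary measure in terms of Askey--Wilson integral}. Fix $k\geq 1$, points $0=x_0<x_1<\cdots<x_k\leq 1$ and reals $u_1,\ldots,u_k$. In the HD case it suffices to show that
\[
\mathbb{E}_{\mu_n}\left[\exp\left(\frac{1}{\sqrt n}\sum_{j=1}^k u_j\left(h_n^{\HH}(x_j)-h_n^{\HH}(x_{j-1})\right)\right)\right]\ \longrightarrow\ \exp\left(\frac{A}{2(1+A)^2}\sum_{j=1}^k u_j^2(x_j-x_{j-1})\right).
\]
Setting $v_i:=u_j$ for $\floor{nx_{j-1}}<i\leq\floor{nx_j}$ (and $0$ otherwise) and $t_i:=\exp(v_i/\sqrt n)$, the left-hand side equals
\[
\exp\left(-\frac{A}{(1+A)\sqrt n}\sum_{i=1}^n v_i\right)\cdot\frac{\Pi_n(t_1,\ldots,t_n)}{\Pi_n(1,\ldots,1)}.
\]
For $n$ large the $t_i$'s lie in the open interval $I$ of Theorem~\ref{thm: stationary measure in terms of Askey--Wilson integral}, so the representation~\eqref{PIN} applies, and the problem reduces to the asymptotics of $\Pi_n(t_1,\ldots,t_n)/\Pi_n(1,\ldots,1)$ as $t_i\to 1$ at rate $n^{-1/2}$.

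The core step is to extract the dominant contribution to the signed-measure integral. In the HD phase ($A>1$, $A>C$) the Askey--Wilson system built from the shifted quartet $(A\sqrt{t_i},B\sqrt{t_i},C/\sqrt{t_i},D/\sqrt{t_i})$ carries a distinguished atom at
\[
x_i^\star=\tfrac{1}{2}\left(A\sqrt{t_i}+\tfrac{1}{A\sqrt{t_i}}\right),
\]
and a direct computation gives the crucial factorization
\[
1+t_i+2\sqrt{t_i}\,x_i^\star=\frac{(1+A)(1+At_i)}{A}.
\]
I will perform a dominant-term analysis on the multi-dimensional signed measure $\pi_{t_1,\ldots,t_n}$, built iteratively from $\pi_{t_1}$ and the transitions $P_{s,t}(x,\cdot)$ of Section~\ref{subsec:The multi-time Askey--Wilson signed measures}, to show that
\[
\Pi_n(t_1,\ldots,t_n)=W_n(t_1,\ldots,t_n)\prod_{i=1}^n\frac{(1+A)(1+At_i)}{A}\cdot(1+o(1)),
\]
where the prefactor $W_n$ is smooth in the $t_i$'s and satisfies $W_n(t_1,\ldots,t_n)/W_n(1,\ldots,1)\to 1$ uniformly for $|\log t_i|=O(n^{-1/2})$. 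Granting this, $\Pi_n(t_1,\ldots,t_n)/\Pi_n(1,\ldots,1)=\prod_i(1+At_i)/(1+A)\cdot(1+o(1))$, and Taylor expansion yields
\[
\log\frac{1+Ae^{v/\sqrt n}}{1+A}=\frac{A}{1+A}\cdot\frac{v}{\sqrt n}+\frac{A}{2(1+A)^2}\cdot\frac{v^2}{n}+O(n^{-3/2}).
\]
Summing over $i$, the $1/\sqrt n$ term cancels exactly against the centering $-\frac{A}{(1+A)\sqrt n}\sum v_i$, while $\frac{1}{n}\sum v_i^2\to\sum_j u_j^2(x_j-x_{j-1})$, producing the Gaussian log-MGF displayed in~\eqref{eq:HD fluc}. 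The LD case is strictly analogous via the $C$-driven atom $x_i^\star=\tfrac{1}{2}(C/\sqrt{t_i}+\sqrt{t_i}/C)$, for which $1+t_i+2\sqrt{t_i}\,x_i^\star=(1+C)(C+t_i)/C$; the corresponding Taylor expansion of $\log((C+e^{v/\sqrt n})/(C+1))$ has linear coefficient $1/(1+C)$ (matching the LD centering) and quadratic coefficient $C/(2(1+C)^2)$ (matching the desired variance).

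The main obstacle is the dominance step. The signed measure $\pi_{t_1,\ldots,t_n}$ has a layered atomic-plus-continuous structure: the univariate marginals support atoms at $A\sqrt{t_i}\,q^\ell$ (present because $A>1$), possibly atoms at $Cq^\ell/\sqrt{t_i}$, and a continuous part on $[-1,1]$, so $\Pi_n$ decomposes into a sum over many ``path types'' through these components. To finish, one must verify that only the principal path---with every coordinate at the $A$-driven atom $x_i^\star$---contributes at leading exponential order. This amounts to comparing the principal integrand value $(1+A)^2/A$ at $t_i=1$ against each alternative: $(1+Aq^\ell)(1+q^\ell/A)$ for $\ell\geq 1$, $(1+C)^2/C$ from a $C$-driven atom (strictly smaller since $A>C$), and $\max_{x\in[-1,1]}(2+2x)=4<(1+A)^2/A$ from the continuous part (strictly smaller since $A>1$); then one must argue that the residual weight $W_n$, expressible through $q$-Pochhammer factors at the shifted arguments $A\sqrt{t_i}$, $B\sqrt{t_i}$, $C/\sqrt{t_i}$, $D/\sqrt{t_i}$, is continuous at $t_i=1$ under the non-degeneracy hypothesis $ABCD\notin\{q^{-l}:l\in\NN\}$ (and, when $A,C\geq 1$, the auxiliary assumption $A/C\notin\{q^\ell:\ell\in\mathbb{Z}\}$). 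Once these bounds are in place, the iterated transition weights along the principal path combine into the single factor $W_n(t_1,\ldots,t_n)$, all non-principal paths are exponentially suppressed on the $n$-scale, and the computation above goes through.
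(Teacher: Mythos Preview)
Your MGF setup and identification of the principal $A$-atom with the factorization $1+t+2\sqrt{t}\,x^\star=(1+A)(1+At)/A$ are correct, and this is indeed the heart of the paper's argument. But the ``dominance step'' as you describe it has real gaps.

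First, because $\pi_{t_1,\ldots,t_d}$ is a \emph{signed} measure whose parameters drift with $n$, comparing integrand values at the various atoms is not sufficient: atom masses can be negative and can blow up as $t_i\to 1$. You need an a priori total-variation bound. The paper (Proposition~\ref{prop:total variation}) shows $|\pi_{t_{1,n},\ldots,t_{d,n}}|\le K n^{2d}$, a polynomial in $n$ with \emph{fixed} exponent $d$ (the number of distinct test points, not $n$); this is what gets beaten by the exponential gap in integrand values. Your ``prefactor $W_n$'' sweeps this under the rug.

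Second, the reason there is a single dominant path at all---rather than a tree of $A$-atoms, $C$-atoms, and continuous pieces---is Lemma~\ref{lem:property on support}: $P_{s,t}\bigl(y_0^{\aa}(s),\d y\bigr)=\delta_{y_0^{\aa}(t)}(\d y)$, i.e.\ the leading $A$-atom is \emph{absorbing} under the transitions. You never state this, and without it your decomposition into ``path types'' does not collapse.

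Third, the LD case is \emph{not} strictly analogous. The $C$-atom $y_0^{\cc}(t)$ arises from the third slot of the transition kernel and is not absorbing: from $y_0^{\cc}(s)$ the transition $P_{s,t}$ is a genuine mixed signed measure (see Case~3 in the proof of Lemma~\ref{lem:support of signed measure Pst}). The paper instead invokes particle--hole duality, which exchanges $(A,B)\leftrightarrow(C,D)$ and thereby places $C$ in the first, absorbing slot.

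Finally, the theorem carries no assumption $A/C\notin\{q^\ell\}$; the paper first proves the result under this genericity condition and then removes it by a monotone coupling (Lemma~\ref{lem:coupling lemma}), which you do not mention.
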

\begin{remark}
Consider the ordinary height function $h_n(x)=\sum_{i=1}^{\floor{nx}}\tau_i$. For the high density phase, we have
$$\frac{1}{n}h_n(x)-\frac{A}{1+A}x=\frac{A}{1+A}\frac{\floor{nx}-nx}{n}+\frac{1}{n}h_n^{\HH}(x).$$
A similar relation holds for  low density phase. Fluctuations \eqref{eq:HD fluc} and \eqref{eq:LD fluc} also imply the density profile:
\be\label{eq:macroscopic density profile}\frac{1}{n} h_n(x) \stackrel{\mathrm{P}}{\longrightarrow}\begin{cases}
   \frac{A}{1+A}x  & \quad A>C, A>1 \quad\text{ (high density phase)}, \\
   \frac{1}{1+C}x & \quad C>A, C>1 \quad\text{ (low density phase)},
\end{cases}\ee
for all $x\in [0,1]$, where $\stackrel{\mathrm{P}}{\longrightarrow}$ denotes convergence in probability.
This rigorously confirms the postulation of the density profile on the high and low density phases that has been widely accepted in the physics literature. As pointed out in the preface, for a non-exhaustive list of works, see \cite{derrida1993exact,schutz1993phase,sasamoto2000density,derrida2003exact,uchiyama2004asymmetric} and references in the survey papers \cite{blythe2007nonequilibrium,corwin2022some}.
\end{remark}
 
\begin{theorem}\label{thm:density profile coexistence line}
Assume $A=C>1$ and $ABCD\notin\{q^{-l}:l\in\NN\}$. 
We have, as $n\rightarrow\infty$,
$$
\frac{1}{n}\{h_n(x)\}_{x\in[0,1]}\stackrel{\mathrm{f.d.d.}}{\Longrightarrow}\{\co(x)\}_{x\in[0,1]},
$$
where the process 
\be\label{eq:coexistence line process}
\co(x):=\frac{Ax+(1-A)\lb x\wedge U\rb}{1+A},\quad x\in[0,1],
\ee 
and $U\sim U(0,1)$ is uniformly distributed on $[0,1]$.
We use $x\wedge y$ to denote the minimum of $x$ and $y$.
\end{theorem}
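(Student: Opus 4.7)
The plan is to verify finite-dimensional convergence of $(h_n(x_j)/n)_{j=1}^k$ via joint Laplace transforms of their increments. Fix $0=x_0<x_1<\cdots<x_k\leq 1$ and $s_1,\dots,s_k$ in a small one-sided neighborhood of $0$, and set $t_i:=\exp(s_j/n)$ for $\lfloor nx_{j-1}\rfloor<i\leq\lfloor nx_j\rfloor$ (with $t_i:=1$ for the remaining indices when $x_k<1$). For $n$ large enough all these $t_i$'s lie in $I\supset(1-\ep,1)$, so the integral representation \eqref{PIN} applies; note that although the coexistence line has $A/C=1\in\{q^l\}$, so the additional hypothesis of Theorem~\ref{thm: stationary measure in terms of Askey--Wilson integral} fails and $1\notin I$, we only need the $t_i$'s themselves to lie in $I$, not the value $1$. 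Then
\be
\mathbb E_{\mu_n}\!\le\exp\!\lb\tfrac{1}{n}\sum_{j=1}^ks_j(h_n(x_j)-h_n(x_{j-1}))\rb\re=\frac{\Pi_n(t_1,\dots,t_n)}{\Pi_n(1,\dots,1)},
\ee
and one should show this ratio converges to $\mathbb E[\exp(\sum_{j=1}^k s_j(\co(x_j)-\co(x_{j-1})))]$; the one-sided restriction on $s_j$ can then be removed by analytic continuation since $h_n(x)/n\in[0,1]$.

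The heart of the argument is a careful analysis of the Askey--Wilson signed measures $\pi_t$ and $P_{s,t}$ on the coexistence line. In the shock region $AC>1$ the marginal $\pi_t$ carries atomic mass at the positions $y_A(t)=\tfrac12(A\sqrt t+(A\sqrt t)^{-1})$ and $y_C(t)=\tfrac12(C/\sqrt t+\sqrt t/C)$, at which the integrand factorizes as
\be
1+t+2\sqrt t\,y_A(t)=(1+tA)(1+1/A),\qquad 1+t+2\sqrt t\,y_C(t)=(1+C)(1+t/C),
\ee
whose logarithmic derivatives at $t=1$ equal the two effective densities $A/(1+A)$ and $1/(1+C)$. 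On the coexistence line $A=C$ these two atoms merge at $t=1$, a resonance which is precisely what removes $1$ from the interval $I$. I expect this resonance produces a degenerate pole in the Askey--Wilson normalization of $\pi_{t_1,\dots,t_n}$, and a residue expansion around it should rewrite the measure, up to subleading terms, as a sum indexed by a ``shock position'' $k\in\{0,1,\dots,n\}$ of atomic configurations in which the first $k$ coordinates sit at $y_C(t_i)$ and the remaining $n-k$ coordinates sit at $y_A(t_i)$, with asymptotically uniform weight in $k$.

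Granting this decomposition, the ratio $\Pi_n(\vec t)/\Pi_n(\vec 1)$ becomes, to leading order and using $A=C$,
\be
\frac{1}{n+1}\sum_{k=0}^n\prod_{i=1}^k\frac{A+t_i}{A+1}\prod_{i=k+1}^n\frac{1+t_iA}{1+A}.
\ee
For $t_i=e^{s_j/n}$ each block of length $m_j=\lfloor nx_j\rfloor-\lfloor nx_{j-1}\rfloor$ contributes $\exp(s_j m_j/(n(1+A)))$ when contained in the low-density segment and $\exp(s_j m_j A/(n(1+A)))$ in the high-density segment, with partially split blocks interpolating linearly. As $n\to\infty$ the discrete average becomes $\int_0^1 du$, and matching the piecewise-linear form of $\co$ in \eqref{eq:coexistence line process} identifies the limit as $\mathbb E[\exp(\sum_{j=1}^k s_j(\co(x_j)-\co(x_{j-1})))]$ with $U\sim U(0,1)$.

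The main obstacle will be carrying out the shock decomposition with the quantitative control required: one must perform the residue analysis around the $A=C$ resonance carefully, use the transition kernels $P_{s,t}$ to enforce a single switch rather than arbitrary interleaving of the two atom types, and track cancellations between the positive and negative parts of the signed measure so as to isolate the uniform law of the shock position. The hypothesis $ABCD\notin\{q^{-l}:l\in\NN\}$ plays its usual role of avoiding Askey--Wilson degeneracies, while the excluded value $A/C=1$ is precisely the resonance generating the shock.
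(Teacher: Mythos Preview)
Your overall strategy---reduce to Laplace transforms, use the integral representation, and isolate the contribution of the atomic configurations where the first coordinates sit at the $C$-atom and the rest at the $A$-atom---matches the paper's.  The gap is in the shock decomposition itself.

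You propose that $\pi_{t_1,\dots,t_n}$ decomposes, to leading order, into $n+1$ atomic configurations indexed by a switch point $k\in\{0,\dots,n\}$ with asymptotically uniform weight $1/(n+1)$.  This cannot happen: within each block of equal times $t_i=t_{i+1}$ the transition is $P_{t,t}(x,\cdot)=\delta_x$, so the switch can occur only at block boundaries, giving just $d+1$ configurations (where $d$ is the number of distinct $t$-values).  Moreover, the weights of these $d+1$ configurations are \emph{not} uniform.  The paper computes them explicitly: $p_0^{\cc}(t_n)\sim \fc_0 n/s$, $p_0^{\aa}(t_n)\sim -\fc_0 n/s$, $P^{\cc,\cc}(t_n,t_n')\sim s/s'$, and $P^{\cc,\aa}(t_n,t_n')\sim (s'-s)/s'$, so the $l$-th configuration carries weight proportional to $1/s_l-1/s_{l+1}$.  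The uniform law of $U$ does not arise from a uniform sum over shock positions; it emerges only after telescoping these non-uniform weights against the exponential factors and recognizing the result as the Laplace transform of $\co$.  A parallel issue arises for the denominator $Z_n=\Pi_n(1,\dots,1)$: since $1\notin I$, one must take a limit $t\to 1^-$ in which the colliding atoms $y_0^{\aa}(t),y_0^{\cc}(t)$ carry masses diverging with opposite signs like $\pm 1/(1-t)$, and extracting the correct asymptotic $Z_n\sim \fc_0\tfrac{A-1}{A+1}n\,(1+A)^{2n}/A^n$ requires a L'H\^opital-type computation rather than a residue expansion.  Finally, controlling the non-atomic remainder uses the total variation bound in Appendix~\ref{sec:total variation bounds}, which gives polynomial growth in $n$; your proposal mentions this but does not indicate how to obtain it.
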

\begin{remark} 
  The density profile (the formal derivative of $\co(x)$ with respect to $x$) over $[0,1]$ has the following interpretation: 
   the density is a constant $\frac 1{1+A}$ over $[0,U)$, and a different constant $\frac A{1+A}$ over $[U,1]$, where $U$ is a random variable uniformly distributed on $[0,1]$, representing the location of the shock. This can be read by re-writing
  \[
  \co(x) = \frac1{1+A}(x\wedge U) + \frac A{1+A}(x- x\wedge U).
  \]
This rigorously confirms the predictions of the density profile on the coexistence line in the physics literature, scattered in several works. Using the matrix ansatz, this density profile has been predicted in \cite{derrida1993exact,schutz1993phase} in the open totally asymmetric simple exclusion process (TASEP) case (where $q=0$ and $\gamma=\delta=0$)  case and later in \cite{essler1996representations,mallick1997finite} under other special parameter conditions (which guarantee the existence of finite dimensional representations of the DEHP algebra). 
Physical explanations also appear in \cite{derrida2002exact,derrida2003exact}, where the large deviation around the density profile is studied.
\end{remark}
  

\begin{remark}
    On the coexistence line the density profile is no longer deterministic (instead, it becomes a random process). Consequently, the definition of limit fluctuations becomes problematic and we do not pursue this issue here further.  
\end{remark}  

\begin{remark}
In the fan region and its boundary $AC\leq 1$, the density profile and limit fluctuations have been rigorously studied in \cite{bryc2019limit} using the aforementioned Askey--Wilson process method. Combining the results therein and the above two theorems, we are able to draw a picture (Figure \ref{fig:phase diagram density fluctuation}) of the first and second order asymptotics of open ASEP (the second order asymptotics on coexistence line is out of our reach).
\end{remark}
\begin{remark}
    On a different (microscopic) scale, the density profile has been shown in the mathematical works \cite{liggett1999stochastic,liggett1975ergodic} in the  
    open TASEP case (where $q=0$ and $\gamma=\delta=0$)
    in the full phase diagram. See Theorem 3.29 and Theorem 3.41 in Part \upperRomannumeral{3} of \cite{liggett1999stochastic}. 
\end{remark}
\begin{remark}
Under a special condition $AC=q^{-l}$ for some $l\in\NN$, \cite{mallick1997finite} found finite dimensional representations of the $\dehp$ algebra and studied densities and correlation functions on the shock region. Under the same condition, a recent work   \cite{schuetz2022reverse} found a simple representation of the open ASEP stationary measure on the shock region as a convex combination of Bernoulli shock measures, using a reverse duality introduced therein. After the first version of the present paper was posted, \cite{nestoridi2023approximating} established several limit theorems of the open ASEP stationary measure,  where the limits are taken in a different   scale. Their results cover many parts of the phase diagram, including a sub-region of the shock region, where the characterization from \cite{schuetz2022reverse} was used.   It would be interesting to see if there are connections of these works to our methods. 
\end{remark}

\subsection{Outline of the paper}
In Section \ref{sec:AW polynomials and signed measures} we introduce the Askey--Wilson polynomials, signed measures and multi-dimensional signed measures and prove several properties. 
In Section \ref{sec:Proof of Theorem stationary measure in terms of Askey--Wilson integral} we prove Theorem \ref{thm: stationary measure in terms of Askey--Wilson integral} by extending the projection formula. 
In Section \ref{sec:limit fluctuation} we prove Theorem \ref{thm:fluctuations} and in Section \ref{sec:coexistence line} we prove Theorem \ref{thm:density profile coexistence line}, both by their multi-point Laplace transforms, and using the total variation bounds from Appendix \ref{sec:total variation bounds}.

\subsection*{Acknowledgements}  
We thank Wlodek Bryc for many helpful conversations and for pointing out an error in a previous version of the paper.
We thank   Ivan Corwin and Dominik Schmid for helpful discussions related to the results in this paper. Y.W.~was partially supported by Army Research Office, US (W911NF-20-1-0139).   
J.W.~was partially supported by IDUB grant no. 1820/366/201/2021, Warsaw Univ. Techn., Poland and by Taft Research Center at University of Cincinnati.
Z.Y.~was partially supported by Ivan Corwin’s NSF grant DMS-1811143 as well as the Fernholz Foundation’s `Summer Minerva Fellows' program.  

\section{Askey--Wilson polynomials and signed measures}
\label{sec:AW polynomials and signed measures}
\subsection{Askey--Wilson polynomials}
\label{subsec:Askey--Wilson polynomials}
The Askey--Wilson polynomials $w_m(x):=w_m(x;a,b,c,d|q)$, $m\in \NN:=\{0,1,\ldots\}$, constitute  a family of orthogonal polynomials introduced by Askey and Wilson \cite{askey1985some}.
When $abcd\notin\{q^{-l}:l\in\NN\}$, they are defined by three-term recurrence:
\be\label{eq:three term recurrence}
A_mw_{m+1}(x)+B_mw_m(x)+C_mw_{m-1}(x)=2xw_m(x), 
\ee
for $m\in\NN$,
with $w_0(x)=1$ and $w_{-1}(x)=0$,
where
\begin{align}
\label{eq:Am}
A_m&=
\frac{1-q^{m-1}abcd}{(1-q^{2m-1}abcd)(1-q^{2m}abcd)},
\\ 
B_m&=
\frac{q^{m-1}}{(1-q^{2m-2}abcd)(1-q^{2m}abcd)}
[(1+q^{2m-1}abcd)(qs+abcds')-q^{m-1}(1+q)abcd(s+qs')],
\\ 
\label{eq:Cm}
C_m&=
\frac{(1-q^m)(1-q^{m-1}ab)(1-q^{m-1}ac)(1-q^{m-1}ad)(1-q^{m-1}bc)
(1-q^{m-1}bd)(1-q^{m-1}cd)}
{(1-q^{2m-2}abcd)(1-q^{2m-1}abcd)},
\end{align}
with $s=a+b+c+d$ and $s'=a^{-1}+b^{-1}+c^{-1}+d^{-1}$.

\subsection{Askey--Wilson signed measures}
\label{subsec:Askey--Wilson measure}
We define the Askey--Wilson signed measures. We do not pursue the general parameters but only define them on the parameter region that will be necessary for open ASEP. 
\begin{definition}\label{def:region omega}
We define $\WO$ to be the set of $(a,b,c,d)\in\mathbb{C}^4$ such that   
\begin{enumerate} 
    \item[\textbf{(1)}]   $a^2,b^2,c^2,d^2,ab,ac,ad,bc,bd,cd\notin\{q^{-l}:l\in\NN\}$,
    \item[\textbf{(2)}]   $abcd\notin\{q^{-l}:l\in\NN\}$,
    \item[\textbf{(3)}]   For any two  distinct $\e,\f\in\{a,b,c,d\}$ such that $|\e|,|\f|\geq 1$,  we have  $\e/\f\notin\{q^l:l\in\mathbb{Z}\}$,
    \item[\textbf{(4)}]  $a,b$ are real, and $c,d$ are either real or form complex conjugate pair; $ab<1$ and $cd<1$.
\end{enumerate}
We define $\Omega$ to be the set $(a,b,c,d)$ where only \textbf{(2)}, \textbf{(3)}  and \textbf{(4)} are satisfied.
\end{definition}
 
\begin{definition}\label{def:definition of Askey--Wilson measure}
    Assume $(a,b,c,d)\in\Omega$. We first define a set $F(a,b,c,d)\subset\RR\setminus(-1,1)$ of atoms. The atoms are generated by each $\e\in\{a,b,c,d\}$ with  $|\e|\geq 1$. By condition \textbf{(4)} in Definition \ref{def:region omega}, such $\e$ must be a real number. The atoms generated by $\e$ are given by 
    $$
y_{j}^\eee=y_{j}^\eee(a,b,c,d)=\frac{1}{2}\lb \e q^j+(\e q^j)^{-1}\rb,
$$
with $j\geq0$ such that $|aq^j|\geq1$. 
    The bold symbol $\eee$ in the superscripts signal that the atom is caused by the parameter with label $\eee\in\{\aa,\bb,\cc,\dd\}$. We remark that condition \textbf{(3)} in Definition \ref{def:region omega} above guarantees that the atoms do not collide with each other, i.e. for any two $\eee,\fff\in\{\aa,\bb,\cc,\dd\}$ such that the corresponding $\e,\f$ satisfy $|\e q^j|, |\f q^k|\geq1$ for some $j,k\in\NN$, one has $\e q^j\neq \f q^k$ hence $y_j^{\eee}\neq y_k^{\fff}$.

    The Askey--Wilson signed measure is of mixed type:
    $$\nu(\d x;a,b,c,d)=f(x;a,b,c,d)\one_{|x|<1}\d x+\sum_{x\in F(a,b,c,d)}
p(x) \delta_{x}$$
where the continuous part density is defined as, for $x=\cos\theta\in(-1,1)$,
\be\label{eq:continuous part density}f(x;a,b,c,d)=\frac{(q, ab, ac, ad, bc, bd, cd)_{\infty}}{2\pi
(abcd)_{\infty}\sqrt{1-x^2}} \biggl|\frac{(e^{2\i\theta
})_{\infty}}{(ae^{\i\theta}, be^{\i\theta}, ce^{\i\theta},
de^{\i\theta})_{\infty}} \biggr|^2. 
\ee 
Here and below, for complex $z$ and $n\in\NN\cup\{\infty\}$, we use the $q$-Pochhammer symbol:
$$
(z)_n=(z;q)_n=\prod_{j=0}^{n-1}\,(1-z q^j), \quad (z_1,\cdots,z_k)_n=(z_1,\cdots,z_k;q)_n=\prod_{i=1}^k(z_i;q)_n.
$$
The atoms generated by $\e=a$ (if exist) have corresponding masses 
\begin{align}\label{eq: p_0}
    p(y_{0}^\aa)&=p_0^\aa(a,b,c,d)=\frac{(a^{-2},bc,bd,cd)_\infty}{(b/a,c/a,d/a,abcd)_\infty},\\
    \label{eq: p_j}
p(y_{j}^\aa)&=p_j^\aa(a,b,c,d)=p_0^\aa(a,b,c,d)\frac{q^j(1-a^2q^{2j})(a^2,ab,ac,ad)_j}{(q)_j(1-a^2)a^j\prod_{l=1}^j\lb(b-q^la)(c-q^la)(d-q^la)\rb},\quad j\geq 1. 
\end{align}
We reiterate that the bold symbol $\aa$ in the superscripts signal that the atom (if exists) is caused by the parameter with label $\aa$ (i.e., the first parameter).
 
When $abcd\neq 0$, the formula of $p_j^\aa(a,b,c,d)$ for $j\geq 1$ can be written in a more 
succinct
 form
$$p_j^\aa(a,b,c,d)=p_0^\aa(a,b,c,d)\frac{(a^2,ab,ac,ad)_j(1-a^2
q^{2j})}{(q,qa/b,qa/c,qa/d)_j(1-a^2)} \lb\frac{q}{abcd} \rb^j.$$

For $\e\in\{b,c,d\}$, atoms $y_j^\bb$, $y_j^\cc$, $y_j^\dd$ and masses $p(y_j^\bb)$, $p(y_j^\cc)$, $p(y_j^\dd)$ are given by similar formulas with $a$ and $\e$ swapped.   One can observe that any atom at $\pm 1$ must have mass $0$. 
\end{definition}  
\begin{remark}
    Under an additional assumption that $ac<1$ if $c\in\RR$, the Askey--Wilson signed measures defined above become actual probability measures which have been studied in \cite{bryc2010askey}.
\end{remark}
\begin{remark}\label{rmk:atom weights are continuous functions}
One can observe that, for $\eee\in\{\aa,\bb,\cc,\dd\}$ both $y_j^\eee(a,b,c,d)$ and $p_j^\eee(a,b,c,d)$ for $j\geq 0$ are continuous functions on the subset of parameter region $(a,b,c,d)\in\Omega$ where atom $y_{j}^\eee$ exists (i.e. when the corresponding $\e\in\{a,b,c,d\}$ satisfies $|\e q^j|\geq 1$).  
\end{remark}

\subsection{Orthogonality}
\label{subsec:Orthogonality}
Under the condition \textbf{(1)} in Definition \ref{def:region omega} on $a,b,c,d$, \cite[Theorem 2.3]{askey1985some} shows that the Askey--Wilson polynomials satisfy orthogonality relations with respect to a measure on a specific contour $\C$.
When $(a,b,c,d)\in\WO$, this orthogonality can be written as Askey--Wilson polynomials being orthogonal with respect to the signed measure $\nu(\d x;a,b,c,d)$ given in Section \ref{subsec:Askey--Wilson measure}.
Using some continuity arguments, we will extend such orthogonality to the larger parameter region $\Omega$.
We recall from Section \ref{subsec:Askey--Wilson polynomials} that the Askey--Wilson polynomials are denoted by $w_m(x)=w_m(x;a,b,c,d|q)$ for $m\in\NN$.

\begin{theorem}[Theorem 2.3 in \cite{askey1985some}]
\label{thm: contour integral orthogonality Theorem 2.3 AW}
Assume $a,b,c,d\in\mathbb{C}$ satisfy condition \textbf{(1)} in Definition \ref{def:region omega}:
$$
a^2,b^2,c^2,d^2,ab,ac,ad,bc,bd,cd\notin\{q^{-l}:l\in\NN\}.
$$
Denote
$$
r(z)=\frac{(z^2,z^{-2})_\infty}{(az,a/z,bz,b/z,cz,c/z,dz,d/z)_\infty}.
$$
Assume $\C$ is a contour in $\mathbb{C}$ that encloses $\e q^l$ and excludes $\lb\e q^l\rb^{-1}$ for each $\e\in\{a,b,c,d\}$ and $l\in\NN$. 
Then for any $m,k\in\NN$,
\be\label{eq:AW contour integral}
\oint_\C\frac{\d z}{4\pi\i z}r(z)\tw_m\lb\frac{z+z^{-1}}{2}\rb\tw_k\lb\frac{z+z^{-1}}{2}\rb=\delta_{mk}\frac{(abcd)_\infty}{(q,ab,ac,ad,bc,bd,cd)_\infty}\frac{(1-q^{m-1}abcd)(q,ab,ac,ad,bc,bd,cd)_m}{(1-q^{2m-1}abcd)(abcd)_m}.
\ee
\end{theorem}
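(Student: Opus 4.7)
The stated result is the classical Askey--Wilson contour-integral orthogonality, and my plan is to reproduce the strategy of \cite{askey1985some}. The overall scheme has two parts: first prove $\langle w_m, w_k \rangle = 0$ for $m \neq k$ with respect to the bilinear form $\langle f, g \rangle := \oint_\C r(z) f(\tfrac{z+z^{-1}}{2}) g(\tfrac{z+z^{-1}}{2})\, dz/(4\pi\i z)$, and then compute the diagonal values $\langle w_m, w_m\rangle$ by iterating the three-term recurrence \eqref{eq:three term recurrence}, reducing everything to a single base-case evaluation.

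For the orthogonality, my preferred route is via a second-order $q$-difference operator $\mathcal{D}$ (built from shifts $z \mapsto qz$ and $z \mapsto q^{-1}z$) of which $w_m$ is an eigenfunction with eigenvalue $\lambda_m = (q^{-m}-1)(1-q^{m-1}abcd)$. Since condition \textbf{(2)} ensures the $\lambda_m$ are pairwise distinct, self-adjointness of $\mathcal{D}$ --- verified by shifting the integration contour $z \mapsto qz$ and exploiting an explicit identity for the multiplier $r(qz)/r(z)$ --- gives $(\lambda_m - \lambda_k)\langle w_m, w_k \rangle = 0$, hence orthogonality. An alternative is the direct check that multiplication by $x = (z+z^{-1})/2$ is self-adjoint for $\langle\cdot,\cdot\rangle$, combined with the uniqueness of orthogonal polynomials attached to a symmetric moment functional satisfying this property.

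For the diagonal values, pairing both sides of \eqref{eq:three term recurrence} with $w_{m-1}$ and using the self-adjointness of multiplication by $x$ yields the ratio $\langle w_m, w_m \rangle/\langle w_{m-1}, w_{m-1}\rangle = C_m/A_{m-1}$, so by induction everything reduces to $\langle 1, 1\rangle = \oint_\C r(z)\, dz/(4\pi\i z)$. This is the celebrated Askey--Wilson $q$-beta integral, whose value $(abcd)_\infty/(q,ab,ac,ad,bc,bd,cd)_\infty$, plugged back into the inductive formula, produces exactly the right-hand side of \eqref{eq:AW contour integral}.

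The main obstacle is the $q$-beta integral itself. My approach would first establish it in the regime $|a|,|b|,|c|,|d|<1$, where $\C$ can be chosen to be the unit circle: parametrize $z=e^{\i\theta}$, expand the integrand as a basic hypergeometric series, and apply Bailey's ${}_6\psi_6$ summation formula (or an equivalent contiguous-relation argument). For general $(a,b,c,d)$, both sides are meromorphic, with poles confined to $\{q^{-l}:l\in\NN\}$ in the variables $ab, ac, ad, bc, bd, cd, abcd$, all of which are excluded by condition \textbf{(1)} of Definition \ref{def:region omega}; analytic continuation then completes the proof. A secondary technical point is the consistent deformation of the prescribed contour --- enclosing $\e q^l$ while excluding $(\e q^l)^{-1}$ --- throughout the parameter region, which is guaranteed by the separations forced by condition \textbf{(3)}.
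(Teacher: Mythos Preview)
The paper does not prove this theorem; it is stated as a direct quotation of Theorem 2.3 in \cite{askey1985some} and used as a black box to derive Corollary \ref{cor:orthogonality AW on R}. Your outline correctly reconstructs the classical Askey--Wilson argument: orthogonality via the second-order $q$-difference operator for which the $w_m$ are eigenfunctions, followed by an inductive computation of the squared norms through the three-term recurrence and the Askey--Wilson $q$-beta integral.

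Two small corrections. First, your appeal to condition \textbf{(2)} for the distinctness of the eigenvalues $\lambda_m=(q^{-m}-1)(1-q^{m-1}abcd)$ is legitimate but not listed among the theorem's explicit hypotheses; it is, however, implicitly required for the polynomials $w_m$ to be defined via \eqref{eq:three term recurrence}--\eqref{eq:Cm}, so no harm is done. Second, your closing remark that condition \textbf{(3)} guarantees the existence of the contour $\C$ is incorrect: condition \textbf{(3)} concerns ratios $\e/\f$ and is neither assumed here nor relevant. What ensures that a contour separating the points $\{\e q^l:\e\in\{a,b,c,d\},\,l\in\NN\}$ from $\{(\e q^l)^{-1}\}$ exists is precisely condition \textbf{(1)}, since a collision $\e q^l=(\f q^k)^{-1}$ would force $\e\f=q^{-(l+k)}$.
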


\begin{corollary}\label{cor:orthogonality AW on R}
Assume $a,b,c,d\in\WO$. Then for any $m,k\in\NN$,
\be \label{eq:orthogonality on R AW}
\int_\mathbb{R}\nu(\d x;a,b,c,d)\tw_m(x)\tw_k(x)=\delta_{mk}\frac{(1-q^{m-1}abcd)(q,ab,ac,ad,bc,bd,cd)_m}{(1-q^{2m-1}abcd)(abcd)_m}
\ee 
\end{corollary}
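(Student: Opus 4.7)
The plan is to reduce Corollary~\ref{cor:orthogonality AW on R} to Theorem~\ref{thm: contour integral orthogonality Theorem 2.3 AW} by deforming the contour $\C$ in~\eqref{eq:AW contour integral} to the unit circle $|z|=1$. Since $(a,b,c,d)\in\WO$, condition~\textbf{(1)} of Definition~\ref{def:region omega} ensures that the poles of $r(z)$ (arising from the factors $(\e z,\e/z)_\infty^{-1}$) are simple and never lie on the unit circle, while condition~\textbf{(4)} forces any $\e\in\{a,b,c,d\}$ with $|\e|\geq 1$ to be real (since $a,b$ are always real and the conjugate-pair case $d=\overline{c}$ gives $|c|=|d|=\sqrt{cd}<1$). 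I would take $\C$ to be the unit circle modified by outward detours enclosing each outer pole $\e q^j$ (with $|\e q^j|>1$) and inward detours carving out each inner pole $(\e q^j)^{-1}$ (with $|(\e q^j)^{-1}|<1$), so that $\oint_\C=\oint_{|z|=1}+$ contributions from these deformation loops.

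First I would identify the unit-circle contribution with the absolutely continuous part of $\nu(\d x;a,b,c,d)$. Parametrizing $z=e^{\i\theta}$, the integrand is invariant under $\theta\mapsto-\theta$, so the unit-circle integral reduces to one over $\theta\in(0,\pi)$. Using $\overline{(\e e^{\i\theta})_\infty}=(\overline{\e}e^{-\i\theta})_\infty$ together with the reality-or-conjugate-pair structure of the parameters, the denominator of $r(e^{\i\theta})$ rewrites as $|(ae^{\i\theta},be^{\i\theta},ce^{\i\theta},de^{\i\theta})_\infty|^2$ and $(z^2,z^{-2})_\infty=|(e^{2\i\theta})_\infty|^2$. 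Substituting $x=\cos\theta$ (so $\sqrt{1-x^2}=\sin\theta$ and $\d x=-\sin\theta\,\d\theta$) and combining with the normalization $\tfrac{(q,ab,ac,ad,bc,bd,cd)_\infty}{(abcd)_\infty}$ from the right-hand side of~\eqref{eq:AW contour integral}, the resulting density on $(-1,1)$ reproduces $f(x;a,b,c,d)$ as in~\eqref{eq:continuous part density}.

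Second, by the $z\mapsto 1/z$ invariance of $r(z)\tw_m(\tfrac{z+z^{-1}}{2})\tw_k(\tfrac{z+z^{-1}}{2})$, the residue of the integrand at an inner pole $(\e q^j)^{-1}$ is the negative of that at the outer pole $\e q^j$; summing the two deformation contributions (adding the outer, subtracting the inner) therefore collapses to the single clean residue $\mathrm{Res}_{z=\e q^j}\frac{r(z)\tw_m(\tfrac{z+z^{-1}}{2})\tw_k(\tfrac{z+z^{-1}}{2})}{z}$, with the doubling absorbing the prefactor $\tfrac{1}{4\pi\i z}$ versus the usual $\tfrac{1}{2\pi\i z}$ from the residue theorem. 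Since $\tfrac{z+z^{-1}}{2}=y_j^{\eee}$ at the pole, this factors as $\tw_m(y_j^{\eee})\tw_k(y_j^{\eee})$ times a constant depending only on $a,b,c,d,q,j$. The main obstacle is verifying that, after dividing by the normalization from~\eqref{eq:AW contour integral}, this constant equals $p_j^{\eee}(a,b,c,d)$ in~\eqref{eq: p_0}--\eqref{eq: p_j}; this amounts to a lengthy but routine manipulation of $q$-Pochhammer products, which I would handle by first treating $j=0$ to recover~\eqref{eq: p_0} directly, and then computing the ratio $p_j^{\eee}/p_{j-1}^{\eee}$ from the ratio of consecutive residues to obtain~\eqref{eq: p_j} inductively. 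The cases $\e\in\{b,c,d\}$ follow from the symmetry of $r(z)$ in its four parameters.
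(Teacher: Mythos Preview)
Your proposal is correct and follows essentially the same approach as the paper's proof: deform the contour $\C$ in Theorem~\ref{thm: contour integral orthogonality Theorem 2.3 AW} to the unit circle, use the $z\mapsto 1/z$ invariance of $r(z)$ to pair the residues at $\e q^j$ and $(\e q^j)^{-1}$, identify the unit-circle integral with the absolutely continuous part $f(x;a,b,c,d)\,\d x$, and identify the residue contributions with the atomic masses $p_j^{\eee}$. The paper's version is terser, deferring the residue bookkeeping to \cite[Theorem~2.5]{askey1985some}, whereas you spell out the parametrization and the inductive strategy for matching~\eqref{eq: p_0}--\eqref{eq: p_j}; but the logical structure is identical.
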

\begin{proof}
This corollary essentially follows from \cite[Theorem 2.5]{askey1985some}. Denote the integrand on LHS\eqref{eq:AW contour integral} as $s(z)$, which (by the definition of $\WO$) is a meromorphic function with singularity $z=0$ and simple poles at $z=\e q^j$ and $z=\lb\e q^j\rb^{-1}$, for any $j\in\NN$ and $\e\in\{a,b,c,d\}$. None of these poles lie on the unit circle $\{|z|=1\}$ or on $\C$. By considering the poles between $\C$ and $\{|z|=1\}$, one can write:

$$\text{LHS\eqref{eq:AW contour integral}}=\oint_{|z|=1}s(z)\d z+2\pi \i\sum_{\e,j:|\e q^j|>1}\res_{z=\e q^j}s(z)-2\pi \i\sum_{\e,j:|\e q^j|>1}\res_{z=\lb\e q^j\rb^{-1}}s(z),$$
where $\e\in\{a,b,c,d\}$ and $j\in\NN$. Since $r(z)=r(1/z)$, one has $\res_{z=\e q^j}s(z)+\res_{z=\lb\e q^j\rb^{-1}}s(z)=0$. The proof follows from \eqref{eq:AW contour integral} by computing both the continuous part integral and the residues.
\end{proof}
\begin{theorem}\label{thm:extending orthogonality}
    Assume $(a,b,c,d)\in\Omega$, then equation \eqref{eq:orthogonality on R AW}  still holds.
\end{theorem}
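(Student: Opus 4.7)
The plan is to extend Corollary~\ref{cor:orthogonality AW on R} from $\WO$ to $\Omega$ by a continuity argument. The only difference between the two sets is condition~\textbf{(1)} of Definition~\ref{def:region omega}, so $\Omega\setminus\WO$ is a countable union of codimension-one real-analytic subvarieties of $\Omega$, namely the loci $\{\e^2=q^{-l}\}$ and $\{\e\f=q^{-l}\}$ for $l\in\NN$ and $\e,\f\in\{a,b,c,d\}$. Hence $\WO$ is open and dense in $\Omega$, and it suffices to show that both sides of \eqref{eq:orthogonality on R AW} depend continuously on $(a,b,c,d)\in\Omega$; equality on $\WO$ (provided by Corollary~\ref{cor:orthogonality AW on R}) then extends to all of $\Omega$ by taking limits along sequences in $\WO$.

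The right-hand side of \eqref{eq:orthogonality on R AW} is a rational function of $a,b,c,d$ whose denominator $(1-q^{2m-1}abcd)\prod_{j=0}^{m-1}(1-q^{j}abcd)$ is nonzero on $\Omega$ by condition~\textbf{(2)}, so continuity there is immediate. For the left-hand side, write $\nu=\nu^{\mathrm{cont}}+\nu^{\mathrm{disc}}$. For $\nu^{\mathrm{cont}}$, the density in \eqref{eq:continuous part density} is jointly continuous in $(x,a,b,c,d)\in(-1,1)\times\Omega$: the denominator factor $(abcd)_\infty$ is nonzero by~\textbf{(2)}, while for $\theta\in(0,\pi)$ the factors $(\e e^{\pm i\theta})_\infty$ are nonzero (for real $\e$ the imaginary part $-\e q^j\sin\theta$ of $1-\e q^j e^{i\theta}$ is nonzero when $\e\ne 0$, and for the complex-conjugate pair $c,d$ the bound $cd<1$ forces $|c|=|d|<1$). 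The apparent $1/\sqrt{1-x^2}$ singularity at $x=\pm 1$ is canceled by the order-one vanishing of $|(e^{2i\theta})_\infty|^2=4(1-x^2)\prod_{j\ge 1}|1-e^{2i\theta}q^j|^2$ there, so $f(\cdot;a,b,c,d)$ extends continuously to $[-1,1]$ and is uniformly bounded on a compact neighborhood of any $(a_0,b_0,c_0,d_0)\in\Omega$. Dominated convergence then yields continuity of $\int_{-1}^{1}f(x)w_m(x)w_k(x)\,dx$ on $\Omega$.

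For $\nu^{\mathrm{disc}}$, the locations $y_j^{\eee}$ and masses $p_j^{\eee}$ are continuous on the relatively open subset of $\Omega$ where the atom exists (i.e.\ $|\e q^j|\ge 1$) by Remark~\ref{rmk:atom weights are continuous functions}. The key point is that as $|\e q^j|\to 1^+$ the atom $y_j^{\eee}$ merges into $\pm 1$ and its mass tends to $0$: for $\eee=\aa$ this is witnessed by the factor $(a^{-2})_\infty$ in $p_0^{\aa}$ vanishing as $a\to\pm 1$, and by the factor $(1-a^2 q^{2j})$ in the numerator of $p_j^{\aa}$ vanishing as $a\to\pm q^{-j}$; the remaining cases are symmetric. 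Consequently each summand $p_j^{\eee}w_m(y_j^{\eee})w_k(y_j^{\eee})$ extends continuously across the threshold $|\e|=q^{-j}$ with value $0$, and the finite discrete sum is continuous on $\Omega$. One also checks that no denominator Pochhammer in the mass formulas vanishes on $\Omega$ where the atom exists: $(abcd)_\infty\ne 0$ by~\textbf{(2)}, and vanishing of $(\f/a)_\infty$ for some $\f\in\{b,c,d\}$ would force $\f/a=q^{-k}$ and hence $|\f|\ge|a|\ge 1$, contradicting condition~\textbf{(3)}.

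The main obstacle is the case analysis in the last paragraph: one must identify the specific Pochhammer factor responsible for the vanishing of each atom mass at its threshold $|\e q^j|=1$, and rule out denominator singularities of $p_j^{\eee}$ on $\Omega\setminus\WO$ using condition~\textbf{(3)} in place of the stronger condition~\textbf{(1)} that is available on $\WO$. Once continuity of both sides on $\Omega$ is secured, Theorem~\ref{thm:extending orthogonality} follows by taking the limit in \eqref{eq:orthogonality on R AW} along any sequence in $\WO$ converging to the given point of $\Omega$.
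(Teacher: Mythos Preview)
Your overall strategy---density of $\WO$ in $\Omega$ and continuity of both sides of \eqref{eq:orthogonality on R AW}---is exactly the paper's, but your treatment of the continuous part has a genuine gap.

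You claim that $f(\cdot;a,b,c,d)$ extends continuously to $[-1,1]$ and is uniformly bounded near any $(a_0,b_0,c_0,d_0)\in\Omega$, arguing that the numerator factor $|(e^{2i\theta})_\infty|^2=4(1-x^2)\prod_{j\ge1}|1-q^je^{2i\theta}|^2$ cancels the $1/\sqrt{1-x^2}$. This is incorrect at precisely the points of $\Omega$ you are trying to reach. If $\e_0\in\{a_0,b_0,c_0,d_0\}$ satisfies $\e_0=\pm q^{-r}$ for some $r\in\NN$ (such points lie in $\Omega\setminus\WO$ since $\e_0^2=q^{-2r}$), then the \emph{denominator} factor $|1-\e_0 q^r e^{i\theta}|^2=|1\mp e^{i\theta}|^2=2(1\mp x)$ also vanishes at $x=\pm1$, and after all cancellations one gets $f\sim\text{const}\cdot\sqrt{(1\pm x)/(1\mp x)}$, which blows up. Your imaginary-part argument only applies for $\theta\in(0,\pi)$ strictly, and does not control the boundary behaviour.

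The paper closes this gap by choosing a specific approximating sequence $(a_i,b_i,c_i,d_i)=((1+\ep_i)a_0,\ldots)$ satisfying $|\e_i|\ge|\e_0|$, so that the number of atoms stays constant and, crucially, the relevant denominator factor satisfies $|1-\widetilde\e_i e^{i\theta}|^2\ge 2(1\mp x)$ uniformly in $i$. This yields a uniform \emph{integrable} dominating function $1/\sqrt{(1-x)(1+x)}$ (or $1/\sqrt{1\mp x}$), and dominated convergence applies. Condition~\textbf{(3)} ensures at most one parameter can approach each of $\pm1$ in this way, so the bound remains integrable. You need this monotone-approximation-plus-integrable-domination argument (or an equivalent one) to make the continuous part go through; uniform boundedness of $f$ is simply false on all of $\Omega$.
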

\begin{remark}
    This theorem covers all the cases in \cite{bryc2010askey} that will be useful for open ASEP, in particular cases $ a^2,b^2,c^2,d^2 \in\{q^{-l}:l\in\NN\}$, which are not explained clearly enough therein.
\end{remark}
\begin{proof}
When $(a,b,c,d)\in\WO$, we write \eqref{eq:orthogonality on R AW} explicitly: 
\be\label{eq: in the proof of mass 1 and orthogonality}\int_{-1}^1f(x;a,b,c,d)\tw_m(x)\tw_k(x)\d x+\sum_{x\in F(a,b,c,d)}p(x)\tw_m(x)\tw_k(x)=\delta_{mk}\frac{(1-q^{m-1}abcd)(q,ab,ac,ad,bc,bd,cd)_m}{(1-q^{2m-1}abcd)(abcd)_m}.\ee
We need to prove that the above equation holds for all $(a,b,c,d)\in\Omega\setminus\WO$.  

For any $(a_0,b_0,c_0,d_0)\in\Omega\setminus\WO$, we choose a sequence $\{(a_i,b_i,c_i,d_i)\}_{i=1}^\infty\subset\WO$ such that 
\begin{enumerate} 
    \item[\textbf{(i)}]   As $i\rightarrow\infty$, $(a_i,b_i,c_i,d_i)\rightarrow (a_0,b_0,c_0,d_0)$.
    \item[\textbf{(ii)}] $|a_i|\geq|a_0|$, $|b_i|\geq|b_0|$, $|c_i|\geq|c_0|$, $|d_i|\geq|d_0|$ for $i\in\mathbb{N}_+:=\{1,2,\ldots\}$. 
    \item[\textbf{(iii)}] $\# F(a_i,b_i,c_i,d_i)=\# F(a_0,b_0,c_0,d_0)$ for $i\in\mathbb{N}_+$.
    \item[\textbf{(iv)}] If $c_0,d_0$ are not real numbers, then $c_i=c_0$ and $d_i=d_0$ for $i\in\mathbb{N}_+$.
\end{enumerate}
We first show that such sequence exists. Since condition \textbf{(1)} in Definition \ref{def:region omega} fails, the product of some pair of $\{a_0,b_0,c_0,d_0\}$ equals to $q^{-l}$ for some $l\in\NN$. By condition \textbf{(4)} in Definition \ref{def:region omega} such pair must consists of two real numbers.
For $i\in \mathbb N_+$ and $\ep_i>0$ set
$$
(a_i,b_i,c_i,d_i)=\begin{cases}
    ((1+\ep_i)a_0,(1+\ep_i)b_0,(1+\ep_i)c_0,(1+\ep_i)d_0) &\text{if}\quad a_0,b_0,c_0,d_0\in\RR,\\
    ((1+\ep_i)a_0,(1+\ep_i)b_0,c_0,d_0)  &\text{if}\quad c_0,d_0\notin\RR.
\end{cases}
$$ 
Note that \textbf{(ii)} and \textbf{(iv)} are immediately satisfied. Under \textbf{(ii)}, condition \textbf{(iii)} can be guaranteed by $\ep_i$, $i\in\mathbb N_+$, being small enough. Since $(a_0,b_0,c_0,d_0)\in\Omega$, conditions  \textbf{(2)}, \textbf{(3)} and \textbf{(4)} in Definition \ref{def:region omega} for $(a_i,b_i,c_i,d_i)$ can also be guaranteed by $\ep_i$ being small enough, $i\in\mathbb N_+$.
One can then choose a sequence $\ep_i\rightarrow 0$ such that $(a_i,b_i,c_i,d_i)$ satisfy condition \textbf{(1)} in Definition \ref{def:region omega}. Condition \textbf{(i)} is then also satisfied.
 
Note that RHS of \eqref{eq: in the proof of mass 1 and orthogonality} is continuous on $\Omega$. We only need to prove that the LHS of \eqref{eq: in the proof of mass 1 and orthogonality} evaluated at $(a_i,b_i,c_i,d_i)$ converges as $i\rightarrow\infty$, to that evaluated at $(a_0,b_0,c_0,d_0)$.
We first look at the atomic part.
By \textbf{(ii)}, for any $\eee\in\{\aa,\bb,\cc,\dd\}$ and $j\in\NN$, either $y_j^\eee(a_i,b_i,c_i,d_i)\in F(a_i,b_i,c_i,d_i)$ for all $i\in\NN$ or 
$y_j^\eee(a_i,b_i,c_i,d_i)\notin F(a_i,b_i,c_i,d_i)$ for all $i\in\NN$.  
As $i\rightarrow\infty$, the positions $y_j^\eee(a_i,b_i,c_i,d_i)$ and masses $p_j^\eee(a_i,b_i,c_i,d_i)$ of these atoms converge respectively to $y_j^\eee(a_0,b_0,c_0,d_0)$ and $p_j^\eee(a_0,b_0,c_0,d_0)$. Since the Askey--Wilson polynomials $\tw_m(x)=\tw_m(x;a,b,c,d)$ are  continuous in $x,a,b,c,d$ (which follows from three-term recurrence \eqref{eq:three term recurrence}), we have: 
$$\lim_{i\rightarrow\infty}\sum_{x\in F(a_i,b_i,c_i,d_i)}p(x)\tw_m(x)\tw_k(x)=\sum_{x\in F(a_0,b_0,c_0,d_0)}p(x)\tw_m(x)\tw_k(x).$$

We then look at the continuous part $\int_{-1}^1f(x;a,b,c,d)\tw_m(x)\tw_k(x)\d x$. First, $\tw_m(x;a,b,c,d)\tw_k(x;a,b,c,d)$ is uniformly bounded for  $x\in(-1,1)$ and $(a,b,c,d)=(a_i,b_i,c_i,d_i)$, $i\in\NN$, and as $i\rightarrow\infty$, for any $x\in(-1,1)$, it converges to $\tw_m(x;a_0,b_0,c_0,d_0)\tw_k(x;a_0,b_0,c_0,d_0)$. Since $|1-e^{2\i\theta}|^2=4(1-x^2)$ we can write
\be\label{eq:in the proof of orthogonality continuous part}f(x;a,b,c,d)=\frac{(q, ab, ac, ad, bc, bd, cd)_{\infty}|(qe^{2\i\theta})_\infty|^2}{2\pi
(abcd)_{\infty}}\times\frac{4\sqrt{1-x^2}}{|(ae^{\i\theta},be^{\i\theta},ce^{\i\theta},de^{\i\theta})_\infty|^2}.\ee
The first fraction on the RHS of \eqref{eq:in the proof of orthogonality continuous part} is uniformly bounded on $x\in(-1,1)$ and $(a,b,c,d)=(a_i,b_i,c_i,d_i)$, $i\in\NN$, and as $i\rightarrow\infty$, for any fixed $x\in(-1,1)$, it converges to the one for $(a_0,b_0,c_0,d_0)$. 

We then look at the second fraction on the RHS of \eqref{eq:in the proof of orthogonality continuous part}.
Define $m_1$ (resp. $m_2$) to be the number of elements in $\{a_0,b_0,c_0,d_0\}$ that falls in $\{q^{-l}:l\in\NN\}$ (resp. $\{-q^{-l}:l\in\NN\}$). By condition \textbf{(3)} in Definition \ref{def:region omega}, $m_1,m_2\leq 1$.
When $m_1=m_2=1$, we have $\e_0,\f_0\in\{a_0,b_0,c_0,d_0\}$ such that $\e_0=q^{-r}$ and $\f_0=-q^{-s}$ for $r,s\in\NN$. We denote $\widetilde{\e}_i=q^r\e_i$ and $\widetilde{\f}_i=q^s\f_i$ for $i\in\NN$. 
We look at the denominator $\prod_{\mathfrak{h}_i\in\{a_i,b_i,c_i,d_i\}}\prod_{l=0}^{\infty}|1-q^l\mathfrak{h}_ie^{\i\theta}|^2$. Based on the assumption $\lim_{i\rightarrow\infty}(a_i,b_i,c_i,d_i)=(a_0,b_0,c_0,d_0)$, for sufficiently large $i$, all of the numbers $q^l\mathfrak{h}_i$ for $\mathfrak{h}_i\in\{a_i,b_i,c_i,d_i\}$ and $l\in\NN$ are uniformly bounded away from $1$, except for two: $q^r\e_i=\widetilde{\e}_i$ and $q^s\f_i=\widetilde{\f}_i$. Consequently, except for $|1-\widetilde{\mathfrak{e}}_ie^{\i\theta}|^2$ and $|1-\widetilde{\mathfrak{f}}_ie^{\i\theta}|^2$, all other terms in the denominator are uniformly bounded away from $0$. 
By \textbf{(ii)} we have $\widetilde{\e}_i\geq 1$ and  $\widetilde{\f}_i\leq -1$, hence $|1-\widetilde{\e}_ie^{\i\theta}|^2=1+\widetilde{\e}_i^2-2\widetilde{\e}_ix\geq2(1-x)$ and $|1-\widetilde{\f}_ie^{\i\theta}|^2=1+\widetilde{\f}_i^2-2\widetilde{\f}_ix\geq2(1+x)$.
Hence we have $\frac{4\sqrt{1-x^2}}{|1-\widetilde{\e}_ie^{\i\theta}|^2|1-\widetilde{\f}_ie^{\i\theta}|^2}\leq \frac{1}{\sqrt{(1-x)(1+x)}}$. Therefore $f(x;a,b,c,d)$ for $(a,b,c,d)=(a_i,b_i,c_i,d_i)$, $i\in\NN$ and $x\in(-1,1)$ are uniformly bounded by a constant times $\frac{1}{\sqrt{(1-x)(1+x)}}$, which is an integrable function  on $(-1,1)$. By the dominated convergence theorem 
\begin{multline*}
\lim_{i\rightarrow\infty}\int_{-1}^1f(x;a_i,b_i,c_i,d_i)\tw_m(x;a_i,b_i,c_i,d_i)\tw_k(x;a_i,b_i,c_i,d_i)\d x\\
=\int_{-1}^1f(x;a_0,b_0,c_0,d_0)\tw_m(x;a_0,b_0,c_0,d_0)\tw_k(x;a_0,b_0,c_0,d_0)\d x.
\end{multline*} 
In other cases $(m_1,m_2)=(1,0),(0,1)$ and $(0,0)$, one can similarly bound $f(x;a_i,b_i,c_i,d_i)$ by a constant times integrable functions $\frac{1}{\sqrt{1-x}}$,  $\frac{1}{\sqrt{1+x}}$ and $1$ and use the dominated convergence theorem.

By combining the discrete and continuous 
parts,
 we conclude that \eqref{eq: in the proof of mass 1 and orthogonality} holds for any $(a_0,b_0,c_0,d_0)\in\Omega\setminus\WO$, hence \eqref{eq: in the proof of mass 1 and orthogonality} holds for $(a,b,c,d)\in\Omega$.
\end{proof}


The following is a simple corollary of Theorem \ref{thm:extending orthogonality}:
\begin{corollary}\label{cor:1}
    Assume $(a,b,c,d)\in\Omega$. Then we have: 
\be\label{eq:mass equal 1}\int_\mathbb{R}\nu(\d x;a,b,c,d)=1.\ee
\end{corollary}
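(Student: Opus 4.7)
The plan is to apply Theorem \ref{thm:extending orthogonality} with $m=k=0$. By the three-term recurrence \eqref{eq:three term recurrence} together with the initial condition $w_0(x)=1$, the zeroth Askey--Wilson polynomial is the constant function $1$, so the left-hand side of \eqref{eq:orthogonality on R AW} reduces to exactly $\int_\RR \nu(\d x;a,b,c,d)$.

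For the right-hand side, I would use the standard convention that the $q$-Pochhammer symbol $(z)_0=1$ (empty product). Setting $m=0$ in \eqref{eq:orthogonality on R AW} then produces
\[
\frac{(1-q^{-1}abcd)\,(q,ab,ac,ad,bc,bd,cd)_0}{(1-q^{-1}abcd)\,(abcd)_0} = \frac{(1-q^{-1}abcd)\cdot 1}{(1-q^{-1}abcd)\cdot 1}=1,
\]
where one should note that $(a,b,c,d)\in\Omega$ ensures $abcd\notin\{q^{-l}:l\in\NN\}$, in particular $abcd\neq q$, so the factor $1-q^{-1}abcd$ is nonzero and the cancellation is legitimate.

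There is no substantive obstacle here; the only thing to verify carefully is that the $m=0$ denominator does not vanish, which is immediate from condition \textbf{(2)} in Definition \ref{def:region omega}. Thus \eqref{eq:mass equal 1} follows at once from Theorem \ref{thm:extending orthogonality}.
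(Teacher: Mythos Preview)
Your approach is correct and is exactly the route the paper takes (it calls this a ``simple corollary'' of Theorem~\ref{thm:extending orthogonality}). There is, however, a slip in your final justification: condition~\textbf{(2)} in Definition~\ref{def:region omega} says $abcd\notin\{q^{-l}:l\in\NN\}=\{1,q^{-1},q^{-2},\dots\}$, which does \emph{not} exclude $abcd=q$. So your stated reason for $1-q^{-1}abcd\neq 0$ is incorrect. This is not fatal---the right-hand side of \eqref{eq:orthogonality on R AW} at $m=0$ is the ratio of two identical expressions and should be read as $1$ (by continuity in $abcd$, or by going back to the $m=k=0$ case of \eqref{eq:AW contour integral}, which is the Askey--Wilson integral and gives the normalization directly without any cancellation). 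Just be aware that condition~\textbf{(2)} does not do the work you attribute to it.
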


\begin{remark}\label{rmk:total variation uniformly bounded compact}
By combining \eqref{eq:mass equal 1} and Remark \ref{rmk:atom weights are continuous functions}, one can observe that the total variation of Askey--Wilson signed measure $\nu(\d x;a,b,c,d)$ is uniformly bounded on compact subsets of $\Omega$.
\end{remark}

\subsection{Multi-dimensional Askey--Wilson signed measures}
\label{subsec:The multi-time Askey--Wilson signed measures}
We will sequentially define three families of Askey--Wilson signed measures $\pi_t(\d y)$, $P_{s,t}(x,\d y)$ and $\pi_{t_1,\dots,t_n}(\d x_1,\dots,\d x_n)$ that will be useful in  studying open ASEP. 
When restricted to the fan region $AC<1$, they become probability measures, and are respectively the one-dimensional marginal laws, transitional probabilities and multi-dimensional marginal laws of the Askey--Wilson processes in \cite{bryc2010askey}. For general parameters they are finite signed measures with total mass $1$. 
Throughout this subsection we will assume $A,C\geq0$, $B,D\in(-1,0]$, and we also require an extra condition $ABCD\notin\{q^{-l}:l\in\NN\}$. 

On a suitable time interval $t\in I$, we will consider the Askey--Wilson signed measure:
\be\label{eq:marginal AW signed}\pi_t(\d y):=\nu\lb \d y;A\sqrt{t},B\sqrt{t},\frac{C}{\sqrt{t}},\frac{D}{\sqrt{t}}\rb.\ee 
We define 
$$U_t:=[-1,1]\cup F\lb A\sqrt{t},B\sqrt{t},C/\sqrt{t},D/\sqrt{t}\rb.$$
Note that $U_t$ is a compact subset of $\RR$ and $\pi_t$ is supported on $U_t$. For any $s,t\in I$, $s<t$  and $x\in U_s$ we will also consider the following  Askey--Wilson signed measure:
\be\label{eq:transition AW signed}P_{s,t}(x,\d y):=\nu\lb \d y;A\sqrt{t},B\sqrt{t},\sqrt{\frac{s}{t}}\lb x+\sqrt{x^2-1}\rb,\sqrt{\frac{s}{t}}\lb x-\sqrt{x^2-1}\rb\rb\ee 

We prove that there exists some time interval $I$ near $1$ such that the above signed measures  $\pi_t(\d y)$, $t\in I$, and $P_{s,t}(x,\d y)$, $s,t\in I$, $s<t$, $x\in U_s$,   are well-defined:
\begin{proposition}\label{prop:open ASEP parameters in Omega}
Assume  $A,C\geq0$, $B,D\in(-1,0]$ and $ABCD\notin\{q^{-l}:l\in\NN\}$,
then there exists an open interval $I$ containing $(1-\ep,1)$ for some $\ep>0$, such that 
for any $s,t\in I$, $s<t$ and $x\in U_s$, 
\be \label{eq: AW measure for ASEP in Omega}
\begin{split}
    &\lb A\sqrt{t},B\sqrt{t},\frac{C}{\sqrt{t}},\frac{D}{\sqrt{t}}\rb\in\Omega,\\
    &\lb A\sqrt{t},B\sqrt{t},\sqrt{\frac{s}{t}}\lb x+\sqrt{x^2-1}\rb,\sqrt{\frac{s}{t}}\lb x-\sqrt{x^2-1}\rb\rb\in\Omega.
\end{split}
\ee  
If we also assume 
 $A/C\notin\{q^l:l\in\mathbb{Z}\}$ if $A,C\geq 1$,
then one can chose open interval $I$ above such that $1\in I$. 

We denote the open interval $I$ chosen above by $I=I(A,B,C,D)$.
\end{proposition}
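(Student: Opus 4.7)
The plan is to verify the three defining conditions \textbf{(2)}, \textbf{(3)}, \textbf{(4)} of Definition \ref{def:region omega} for both 4-tuples in \eqref{eq: AW measure for ASEP in Omega}, uniformly over $s,t$ in a sufficiently small open interval $I$ around $1$ and over $x\in U_s$. Conditions \textbf{(2)} and \textbf{(4)} will be essentially automatic: for \textbf{(4)}, one has $ab=ABt\leq 0<1$ in both 4-tuples, while $cd=CD/t\leq 0$ in the first and $cd=yz=s/t<1$ in the second (using $s<t$); the reality/complex-conjugacy requirement is immediate since $y,z$ form a complex-conjugate pair when $x\in[-1,1]$ and are real when $x$ is an atom. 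For \textbf{(2)}, the product of the first 4-tuple is $ABCD\notin\{q^{-l}:l\in\NN\}$ by hypothesis, while the product of the second is $(ABt)(s/t)=ABs\leq 0$, which trivially avoids the positive set $\{q^{-l}:l\in\NN\}$.

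The bulk of the work lies in condition \textbf{(3)}. First, since $|B|,|D|<1$, for $\ep$ small and $s,t\in I$ one has $|B\sqrt{t}|,|D/\sqrt{t}|<1$, so these entries never participate in \textbf{(3)}; by the same token $F(A\sqrt{s},B\sqrt{s},C/\sqrt{s},D/\sqrt{s})$ contains no $\bb$- or $\dd$-atoms, so the atoms in $U_s$ are only the $\aa$- and $\cc$-atoms (present when $A\geq 1$, resp.~$C\geq 1$). For the first 4-tuple the only potentially non-trivial check is thus $(A\sqrt{t})/(C/\sqrt{t})=At/C\notin\{q^l:l\in\mathbb{Z}\}$, relevant only when $A,C\geq 1$. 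For the second 4-tuple, the case $x\in[-1,1]$ gives $|y|=|z|=\sqrt{s/t}<1$ and the condition is vacuous; when $x$ is an $\aa$-atom, writing $y=Asq^j/\sqrt{t}$ and $z=1/(Aq^j\sqrt{t})$, the atom condition $A\sqrt{s}q^j\geq 1$ together with $s<t$ forces $|z|<1$, and the only pair to check produces the ratio $A\sqrt{t}/y=t/(sq^j)$, reducing to $t/s\notin\{q^m:m\in\mathbb{Z}\}$; when $x$ is a $\cc$-atom, $y=Cq^j/\sqrt{t}$ and $z=s/(Cq^j\sqrt{t})$ with $|z|\leq \sqrt{s/t}<1$, and $A\sqrt{t}/y=At/(Cq^j)$ reduces once more to $At/C\notin\{q^m:m\in\mathbb{Z}\}$.

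The main obstacle is then selecting $I$ to meet both avoidance conditions simultaneously. The condition $t/s\notin\{q^m\}$ comes for free: for $s<t$ both in $(1-\ep,1+\ep)$ with $\ep$ small enough, $t/s$ lies in an interval of the form $(1,1+O(\ep))\subset(1,q^{-1})$, which misses $\{q^m:m\in\mathbb{Z}\}$ entirely, the excluded value $1$ being ruled out by the strict inequality $s<t$. The condition $At/C\notin\{q^l\}$ is where an isolated bad point can appear: the set $\{t:At/C\in q^{\mathbb{Z}}\}$ is discrete, so at most one such value $t^*$ lies near $1$, and removing it from $(1-\ep,1+\ep)$ yields an open $I$ still containing $(1-\ep,1)$. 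Finally, for $1\in I$: if $\min(A,C)<1$ the condition on $At/C$ is vacuous and $t=1$ is harmless, while under the additional hypothesis $A/C\notin\{q^l:l\in\mathbb{Z}\}$ the value at $t=1$ is bounded away from $\{q^l\}$, so a small full interval around $1$ is admissible and $1\in I$ is allowed.
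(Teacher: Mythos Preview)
Your proof is correct and follows essentially the same approach as the paper's: both verify \textbf{(2)} and \textbf{(4)} directly, reduce \textbf{(3)} to the two avoidance conditions $t/s\notin q^{\mathbb{Z}}$ and (when $A,C\geq 1$) $At/C\notin q^{\mathbb{Z}}$ via the same case split on the atom type of $x$, and then choose $I$ small enough. One minor wording slip: ``removing $t^*$ from $(1-\ep,1+\ep)$ yields an open $I$ still containing $(1-\ep,1)$'' is not literally true when $t^*\in(1-\ep,1)$, since the result is not an interval; what you mean (and what the paper does) is to shrink $\ep$ so that the one bad value near $1$ is excluded, e.g.\ take $I=(t^*,1)$ or $I=(1-\ep',1)$ with $\ep'$ small enough.
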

\begin{proof}
    Condition \textbf{(4)} in Definition \ref{def:region omega} clearly holds. Condition \textbf{(2)} in Definition \ref{def:region omega} follows since either $abcd=ABCD$ or $abcd=ABs/t\leq 0$.

We only need to chose $I$ to satisfy condition \textbf{(3)} in Definition \ref{def:region omega}.
 Let $I$ be an interval satisfying: 
\begin{enumerate} 
    \item[(i)]   For any $\tE\in\{A,B,C,D\}$, if $|\tE|<1$, we have $|\tE\sqrt{t}|,|\tE/\sqrt{t}|<1$ for any $t\in I$.
    \item[(ii)]   For any $\tE\in\{A,B,C,D\}$, if $|\tE|>1$, we have $|\tE\sqrt{t}|,|\tE/\sqrt{t}|>1$ for any $t\in I$.
    \item[(iii)] $I\subset (\sqrt{q},1/\sqrt{q})$. Hence $s/t\notin\{q^l:l\in\mathbb{Z}\}$ for any $s<t$ in $I$.
    \item[(iv)]  $At/C\notin\{q^l:l\in\mathbb{Z}\}$ for all $t\in I$, if $A,C\geq 1$.
\end{enumerate}
The first two conditions can be guaranteed by $I$ being a subset of a small enough neighbourhood of $1$. The third condition in general holds on $t\in(1-\ep,1)$ for some $\ep>0$, and also holds on $t\in(1-\ep,1+\ep)$ for some $\ep>0$, if one furthermore assume that $A/C\notin\{q^l:l\in\mathbb{Z}\}$ if $A,C\geq 1$. 

We first look at $\lb A\sqrt{t},B\sqrt{t},C/\sqrt{t},D/\sqrt{t}\rb$. For $t\in I$, in view of (i)  we have $|B\sqrt{t}|<1$ and $|D/\sqrt{t}|<1$.
If $A<1$ or $C<1$ then, similarly, for $t\in I$ we have $A\sqrt{t}<1$ or $C/\sqrt{t}<1$, and  thus condition \textbf{(3)} in Definition \ref{def:region omega} holds. If $A,C\geq 1$ we have $\left|\frac{A\sqrt{t}}{C/\sqrt{t}}\right|=At/C$ for $t\in I$,  so  in view of (iv), condition \textbf{(3)} in Definition \ref{def:region omega} holds.

We then look at $\lb A\sqrt{t},B\sqrt{t},\sqrt{\frac{s}{t}}\lb x+\sqrt{x^2-1}\rb,\sqrt{\frac{s}{t}}\lb x-\sqrt{x^2-1}\rb\rb$ for $x\in U_s$. In view of (i), we have $|B\sqrt{t}|<1$, and also $U_s$ does not have atoms $<-1$.
\begin{itemize}
    \item [Case 1.]  Let $x\in[-1,1]$. Then $\sqrt{\frac{s}{t}}\lb x+\sqrt{x^2-1}\rb$ and $\sqrt{\frac{s}{t}}\lb x-\sqrt{x^2-1}\rb$ are complex conjugate pairs with norm $<1$ and condition \textbf{(3)} in Definition \ref{def:region omega} holds. 
    \item [Case 2.] Let $x>1$ and $A<1$. In view of (i), we have $|A\sqrt{t}|<1$. Note that $\sqrt{\frac{s}{t}}\lb x+\sqrt{x^2-1}\rb$ and $\sqrt{\frac{s}{t}}\lb x-\sqrt{x^2-1}\rb$ are positive numbers, one of which has norm $<1$. Therefore, condition \textbf{(3)} in Definition \ref{def:region omega} never applies and hence vacuously holds.
    \item [Case 3.] Let $x>1$ and $A\geq 1$. Since $x\in U_s$, either $x=\frac{1}{2}\lb q^jA\sqrt{s} +(q^jA\sqrt{s})^{-1}\rb$ (in case $q^jA\sqrt{s}>1$) or $x=\frac{1}{2}\lb q^jC/\sqrt{s}+(q^jC/\sqrt{s})^{-1}\rb$ (in case $q^jC/\sqrt{s}>1$). In the first case $\sqrt{\frac{s}{t}}\lb x+\sqrt{x^2-1}\rb=Asq^j/\sqrt{t}$ and the ratio with $A\sqrt{t}$ equals $q^js/t$. So condition \textbf{(3)} in Definition \ref{def:region omega} holds by (iii). In the second case we have $A,C\geq 1$, and $\sqrt{\frac{s}{t}}\lb x+\sqrt{x^2-1}\rb=Cq^j/\sqrt{t}$ whose ratio with $A\sqrt{t}$ equals $q^jC/(At)$.  So condition \textbf{(3)} in Definition \ref{def:region omega} holds by (iv).
\end{itemize} 
\end{proof} 
\begin{definition}
     Assume  $A,C\geq0$, $B,D\in(-1,0]$, $ABCD\notin\{q^{-l}:l\in\NN\}$ and $I=I(A,B,C,D)$.
    For any $s,t\in I$, $s<t$ and $x\in U_s$, we define the finite signed measures $\pi_t(\d y)$ and $P_{s,t}(x,\d y)$ respectively by \eqref{eq:marginal AW signed} and \eqref{eq:transition AW signed}.
    For convenience, when $s,t\in I$, $s\leq t$  and $x\notin U_s$ we define $P_{s,t}(x,\d y)=0$.
    When $s=t\in I$ and $x\in U_s$ we define $P_{s,s}(x,\d y)=\delta_x(\d y)$. 
\end{definition}
\begin{remark}\label{rmk:total mass 1}
    One can observe from \eqref{eq:mass equal 1} that the finite signed measures $\pi_t(\d y)$ and $P_{s,t}(x,\d y)$ have total mass $1$, i.e. $\int_{\RR}\pi_t(\d y)=1$ and $\int_{\RR}P_{s,t}(x,\d y)=1$ for any $s,t\in I$, $s\leq t$ and $x\in U_s$.
\end{remark}
\begin{lemma}\label{lem:support of signed measure Pst}
Assume  $A,C\geq0$, $B,D\in(-1,0]$, $ABCD\notin\{q^{-l}:l\in\NN\}$ and $I=I(A,B,C,D)$. Then for any $s,t\in I$, $s\leq t$,  the signed measure $P_{s,t}(x,\d y)$ is supported on $U_t$.
\end{lemma}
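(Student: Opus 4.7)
My plan is to track where the atomic and continuous parts of the defining Askey--Wilson signed measure can sit. Write the four parameters as $a=A\sqrt t$, $b=B\sqrt t$, $c=\sqrt{s/t}(x+\sqrt{x^2-1})$, and $d=\sqrt{s/t}(x-\sqrt{x^2-1})$. The continuous density $f(\cdot;a,b,c,d)$ is supported in $[-1,1]\subset U_t$, so I only need to locate the atoms, which by Definition \ref{def:definition of Askey--Wilson measure} come from parameters of modulus $\ge 1$. Condition (i) in the proof of Proposition \ref{prop:open ASEP parameters in Omega} gives $|b|=|B\sqrt t|<1$, so $b$ contributes nothing, while the atoms produced by $a=A\sqrt t$ are, by construction of $U_t$, exactly the atoms of $U_t$ generated by that same parameter.

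For $x\in[-1,1]$ the parameters $c,d$ form a complex conjugate pair of modulus $\sqrt{s/t}\le 1$, so they produce no atoms either. Otherwise $x>1$ (the conditions $|B\sqrt s|,|D/\sqrt s|<1$ rule out atoms of $U_s$ lying below $-1$), so $x=\tfrac12(\e q^j+(\e q^j)^{-1})$ for some $j\ge 0$ with $\e\in\{A\sqrt s,C/\sqrt s\}$ and $\e q^j\ge 1$. When $\e=C/\sqrt s$, a direct substitution yields $c=Cq^j/\sqrt t$ and $d=s/(C\sqrt t\,q^j)$ with $|d|\le 1$, and the atoms produced by $c$ are $\tfrac12\bigl(Cq^{j+k}/\sqrt t+(Cq^{j+k}/\sqrt t)^{-1}\bigr)$ for $k\ge 0$ with $Cq^{j+k}/\sqrt t\ge 1$, which are atoms of $U_t$ generated by the parameter $C/\sqrt t$.

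The delicate sub-case is $\e=A\sqrt s$, where $c=Asq^j/\sqrt t$ and $d=1/(A\sqrt t\,q^j)$, and one finds the key identity
\[
ad=A\sqrt t\cdot\frac{1}{A\sqrt t\,q^j}=q^{-j},
\]
which forces $(ad;q)_\infty=0$ via the vanishing factor at index $l=j$. Since the weight $p_0^{\cc}$ in Definition \ref{def:definition of Askey--Wilson measure} carries $(ad;q)_\infty$ in its numerator, we get $p_0^{\cc}=0$ and hence every $p_k^{\cc}=0$; the parameter $c$ therefore contributes no surviving atom. The very same cancellation also collapses the continuous density $f(\cdot;a,b,c,d)$ identically on $(-1,1)$, leaving a purely atomic measure whose atoms all come from $a$ and thus lie in $U_t$.

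The main obstacle is that in this last sub-case the denominator $(a/c,b/c,d/c,abcd;q)_\infty$ of $p_0^{\cc}$ may itself pick up vanishing factors for non-generic parameters (for instance if $A^2 s$ happens to equal $q^m$ for some $m\le 0$), producing a $0/0$ indeterminacy. I would resolve this in the spirit of the proof of Theorem \ref{thm:extending orthogonality}, approximating the parameter tuple by a sequence in $\WO$ along which the troublesome denominator factors remain nonzero, and invoking the continuity of the atom weights on $\Omega$ recorded in Remark \ref{rmk:atom weights are continuous functions} to pass to the limit $p_k^{\cc}\to 0$. Piecing the three sub-cases together then yields $\mathrm{supp}\,P_{s,t}(x,\cdot)\subset U_t$.
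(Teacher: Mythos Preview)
Your proof is correct and follows essentially the same case split as the paper's: atoms from $a=A\sqrt t$ lie in $U_t$ by definition, $b$ never generates atoms, the case $x\in[-1,1]$ gives complex $c,d$ of modulus $<1$, the case $\e=C/\sqrt s$ gives atoms from $c=Cq^j/\sqrt t$ that already sit in $U_t$, and the case $\e=A\sqrt s$ is dispatched via the identity $ad=q^{-j}$ forcing $(ad)_\infty=0$ in the numerator of $p_0^{\cc}$.

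The only difference is your final paragraph, where you worry about a possible $0/0$ indeterminacy in $p_0^{\cc}$ and propose an approximation argument in the spirit of Theorem~\ref{thm:extending orthogonality}. This concern is unfounded and the detour unnecessary. In the sub-case $\e=A\sqrt s$ (with $s<t$, the case $s=t$ being trivial) one checks directly that the denominator $(a/c,b/c,d/c,abcd)_\infty$ of $p_0^{\cc}$ is nonzero: since $A\sqrt s\,q^j\ge 1$ and $s<t$ we have $|d|=1/(A\sqrt t\,q^j)<1\le |c|$, and of course $|b|<1\le|c|$, so $|b/c|,|d/c|<1$ and hence $(b/c)_\infty,(d/c)_\infty\ne 0$; also $abcd=ABs\le 0$ gives $(abcd)_\infty\ne 0$; finally $a/c=t/(sq^j)$, and condition~(iii) in the proof of Proposition~\ref{prop:open ASEP parameters in Omega} (namely $s/t\notin\{q^l:l\in\mathbb Z\}$) rules out $a/c\in\{q^{-l}:l\ge 0\}$, so $(a/c)_\infty\ne 0$. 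Thus $p_0^{\cc}=0$ cleanly. The same observation $|d|<1$ also shows that $d$ itself generates no atoms, which you use implicitly but do not state. Your remark that the continuous density also vanishes in this sub-case is correct but superfluous, since $[-1,1]\subset U_t$ regardless.
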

\begin{proof}
    We only need to prove the case when $s<t$ and $x\in U_s$. We first show that, if $x\in U_s$ is an atom generated by $A\sqrt{s}$ or $B\sqrt{s}$, then the atoms of $P_{s,t}(x,\d y)$ generated by $\sqrt{\frac{s}{t}}\lb x\pm\sqrt{x^2-1}\rb$ have mass $0$.
    Assume $\tE\in\{A,B\}$ and $x=\frac{1}{2}\lb\tE\sqrt{s}q^j+(\tE\sqrt{s}q^j)^{-1}\rb$ for some $j\in\NN$ such that $|\tE\sqrt{s}q^j|\geq 1$. Then
    $$P_{s,t}(x,\d y)=\nu\lb \d y;A\sqrt{t},B\sqrt{t},(\tE q^j)s/\sqrt{t},(\tE q^j)^{-1}/\sqrt{t}\rb.$$
    We have $|(\tE q^j)^{-1}/\sqrt{t}|<|\tE\sqrt{s}q^j|^{-1}\leq 1$, hence $(\tE q^j)^{-1}/\sqrt{t}$ does not generate atoms. For the atoms generated by  $(\tE q^j)s/\sqrt{t}$, one has $(\tE\sqrt{t})\lb(\tE q^j)^{-1}/\sqrt{t}\rb=q^{-j}$. This means either $ad=q^{-j}$ or $bd=q^{-j}$ in $P_{s,t}(x,\d y)=\nu(\d y;a,b,c,d)$. Using the formulas \eqref{eq: p_0} and \eqref{eq: p_j} for atom masses (in which we need to swap $a$ with $c$ to get masses for atoms generated by $\cc$), we have $p(x_{k}^\cc)=0$ for all atoms $x_{k}^\cc$ generated by $c=(\tE q^j)s/\sqrt{t}$.
    

    We return to the proof. We split it into the following cases:
    \begin{enumerate}
        \item[Case 1.] If $|x|<1$, we have $\vert\sqrt{\frac{s}{t}}\lb x\pm\sqrt{x^2-1}\rb\vert<1$ which do not generate atom, hence all the atoms of $P_{s,t}(x,\d y)$ are generated by $A\sqrt{t}$ and $B\sqrt{t}$, and they are contained in $U_t$.
        \item[Case 2.] If $x\in U_s$ is an atom generated by $A\sqrt{s}$ or $B\sqrt{s}$, then by the argument above,
    the atoms of $P_{s,t}(x,\d y)$ with nonzero mass are necessarily generated by $A\sqrt{t}$ and $B\sqrt{t}$, which are contained in $U_t$.
    \item[Case 3.] If $x\in U_s$ is an atom generated by $C/\sqrt{s}$ or $D/\sqrt{s}$, one can write $x=\frac{1}{2}\lb\tE q^j/\sqrt{s}+\sqrt{s}/(\tE q^j)\rb$, where $\tE\in\{C,D\}$, $j\in\NN$ such that $|\tE q^j/\sqrt{s}|\geq1$. Then $\vert\sqrt{\frac{s}{t}}\lb x-\sqrt{x^2-1}\rb\vert=\vert sq^{-j}/(\tE\sqrt{t})\vert<\vert\sqrt{s}/(\tE q^j)\vert\leq 1$, hence it does not generate atoms. On the other hand $\sqrt{\frac{s}{t}}\lb x+\sqrt{x^2-1}\rb=\tE q^j/\sqrt{t}$, and when $|\tE q^j/\sqrt{t}|\geq1$ it generates atoms which  are contained in $U_t$. Other atoms can be generated by $A\sqrt{t}$ and $B\sqrt{t}$,  and thus  they are contained in $U_t$.
    \end{enumerate} 

    This concludes the proof that the signed measure $P_{s,t}(x,\d y)$ is supported on $U_t$.
\end{proof}

Next is a special property of the Askey--Wilson signed measure $P_{s,t}(x,\d y)$ that will be useful later.
\begin{lemma}\label{lem:property on support}
    Assume $A,C\geq0$, $B,D\in(-1,0]$, $ABCD\notin\{q^{-l}:l\in\NN\}$,  and $I=I(A,B,C,D)$. Assume furthermore that $A>1$, then for $t\in I$, $A\sqrt{t}$ generates a set of atoms, and we denote the largest among them by  
     $y_0^{\aa}(t):=\frac{1}{2}\lb A\sqrt{t}+\lb A\sqrt{t}\rb^{-1}\rb\in U_t$. 
     Then for any $s,t\in I$, $s\leq t$, we have $P_{s,t}\lb y_0^{\aa}(s),\d y\rb=\delta_{y_0^{\aa}(t)}\lb\d y\rb$.  
\end{lemma}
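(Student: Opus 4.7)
The plan is to prove the equality of two compactly-supported signed measures by showing they have the same integrals against every Askey--Wilson polynomial, exploiting the degeneracy $ad=1$ that arises once one substitutes $x=y_0^{\aa}(s)$ into the transition kernel \eqref{eq:transition AW signed}.

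First, substitute directly. Since $A\sqrt{s}\geq 1$, at $x=y_0^{\aa}(s)=\tfrac12\bigl(A\sqrt{s}+(A\sqrt{s})^{-1}\bigr)$ we have $x\pm\sqrt{x^2-1}=(A\sqrt{s})^{\pm 1}$, so $\sqrt{s/t}\,\bigl(x\pm\sqrt{x^2-1}\bigr)\in\{As/\sqrt{t},\,1/(A\sqrt{t})\}$. Setting
\[
a=A\sqrt{t},\qquad b=B\sqrt{t},\qquad c=As/\sqrt{t},\qquad d=1/(A\sqrt{t}),
\]
this gives $P_{s,t}\bigl(y_0^{\aa}(s),\d y\bigr)=\nu(\d y;a,b,c,d)$ with the crucial feature $ad=1$; moreover $y_0^{\aa}(t)=\tfrac12(a+a^{-1})$ is exactly the first $a$-atom of this measure, which exists since $A\sqrt{t}\geq 1$ by Proposition \ref{prop:open ASEP parameters in Omega}.

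Next, I would compare both signed measures against the Askey--Wilson polynomials $w_k(\,\cdot\,;a,b,c,d)$, $k\in\NN$. On the one hand, Theorem \ref{thm:extending orthogonality} with $m=0$ and $w_0=1$ yields
\[
\int_{\RR}w_k(x;a,b,c,d)\,\nu(\d x;a,b,c,d)=0\quad\text{for all }k\geq 1,
\]
while Corollary \ref{cor:1} gives $\int_{\RR}\nu(\d x;a,b,c,d)=1$. On the other hand, the classical evaluation identity
\[
w_k\!\left(\tfrac{a+a^{-1}}{2};a,b,c,d\right)=a^{-k}(ab,ac,ad;q)_k,
\]
obtained by specializing the ${}_4\phi_3$ representation of $w_k$ at $e^{\i\theta}=a^{-1}$ (then $ae^{\i\theta}=1$ and $(1;q)_j=0$ for $j\geq 1$ collapse the hypergeometric sum to its $j=0$ term), combined with $ad=1$, forces $w_k(y_0^{\aa}(t))=0$ for all $k\geq 1$, while $w_0(y_0^{\aa}(t))=1$. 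Hence $\nu(\d y;a,b,c,d)$ and $\delta_{y_0^{\aa}(t)}(\d y)$ produce identical integrals against every $w_k$.

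Since $\{w_k\}_{k\geq 0}$ is a basis for the polynomial algebra and both $\nu(\d y;a,b,c,d)$ and $\delta_{y_0^{\aa}(t)}$ are compactly supported in $U_t$ (by Lemma \ref{lem:support of signed measure Pst}), they agree on all polynomial moments and therefore, by Weierstrass approximation and the Riesz representation theorem, coincide as signed measures; this gives $P_{s,t}(y_0^{\aa}(s),\d y)=\delta_{y_0^{\aa}(t)}(\d y)$. The only real obstacle is the polynomial evaluation at $(a+a^{-1})/2$, which although classical is not stated earlier in the paper and would need either a short lemma or an explicit citation to \cite{askey1985some}. A purely computational alternative would be to evaluate all atoms of $\nu(\d y;a,b,c,d)$ directly---the continuous density \eqref{eq:continuous part density} carries $(ad)_\infty=(1;q)_\infty=0$, the mass $p_0^{\aa}$ collapses to $1$ after cancellation of the pairs $(bc,abcd)_\infty$, $(bd,b/a)_\infty$, $(cd,c/a)_\infty$, $(a^{-2},d/a)_\infty$, and $p_j^{\aa}=0$ for $j\geq 1$ because $(ad)_j=0$---but that route requires extra care for the $c$-atoms because of potential $0/0$ indeterminacy, which is why I prefer the moment-based argument.
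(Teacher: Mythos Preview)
Your moment-based proof is correct, but the paper takes the direct computational route you sketch at the end. After writing
\[
P_{s,t}\bigl(y_0^{\aa}(s),\d y\bigr)=\nu\bigl(\d y;\,A\sqrt{t},\,B\sqrt{t},\,As/\sqrt{t},\,1/(A\sqrt{t})\bigr),
\]
the paper simply says ``the result can then be observed from Definition~\ref{def:definition of Askey--Wilson measure}.'' The key observation is that $ad=1$ kills the continuous density via $(ad)_\infty=0$, kills the higher $a$-atoms via $(ad)_j=0$ for $j\geq1$, and forces $p_0^{\aa}=1$ through the cancellations you describe.

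Your worry about $0/0$ indeterminacy at the $c$-atoms is unfounded and is the only reason you avoid the direct route. The numerator of $p_0^{\cc}$ contains $(ad)_\infty=0$, while the denominator $(a/c,\,b/c,\,d/c,\,abcd)_\infty$ is nonzero: indeed $a/c=t/s\in(1,1/q)$ by condition (iii) in the proof of Proposition~\ref{prop:open ASEP parameters in Omega}, $b/c=Bt/(As)\in(-1,0]$, $d/c=1/(A^2s)<1$, and $abcd=ABs\leq0$. Hence $p_0^{\cc}=0$ cleanly, and all $c$-atom masses vanish. So the computational approach goes through in two lines with no case difficulties.

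Your route via polynomial moments is valid and conceptually clean, but it costs you an external input (the evaluation $w_k\bigl(\tfrac{a+a^{-1}}{2}\bigr)=a^{-k}(ab,ac,ad;q)_k$, which is not derived in the paper) plus the Weierstrass/Riesz wrap-up, whereas the paper's argument stays entirely within Definition~\ref{def:definition of Askey--Wilson measure}. Also note the trivial $s=t$ case is handled by the convention $P_{s,s}(x,\d y)=\delta_x(\d y)$ and should be separated out in your write-up.
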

\begin{proof} We write: 
    $$P_{s,t}\lb y_0^\aa(s),\d y\rb=\nu\lb \d y;A\sqrt{t},B\sqrt{t},As/\sqrt{t},
    1/(A\sqrt{t}) \rb.$$
    The result can then be observed from Definition \ref{def:definition of Askey--Wilson measure}.
\end{proof}


\begin{definition}
    Assume  $A,C\geq0$, $B,D\in(-1,0]$, $ABCD\notin\{q^{-l}:l\in\NN\}$ and $I=I(A,B,C,D)$. 
    For any $t_1,\ldots,t_n\in I$, satisfying $t_1\leq\dots\leq t_n$, we define a finite signed measure $\pi_{t_1,\dots,t_n}$ supported on $U_{t_1}\times\dots\times U_{t_n}$  by 
    \be\label{eq:definition of multi-time}\pi_{t_1,\dots,t_n}(\d x_1,\dots,\d x_n):=\pi_{t_1}(\d x_1)P_{t_1,t_2}(x_1,\d x_2)\dots P_{t_{n-1},t_n}(x_{n -1},\d x_n).\ee
    \end{definition}
  \begin{remark} Note that \eqref{eq:definition of multi-time} gives a well-defined finite signed measure. Indeed, let us consider the following linear functional on $C(U_{t_1}\times\dots\times U_{t_n})$ (which is the space of continuous functions with supremum norm):
    \be\label{eq:functional defining multi-time measure}
     f\mapsto\int_{U_{t_1}}\pi_{t_1}(\d x_1)\int_{U_{t_2}}P_{t_1,t_2}(x_1,\d x_2)\dots\int_{U_{t_n}}P_{t_{n-1},t_n}(x_{n-1},\d x_n)f(x_1,\dots,x_n).\ee
    By Remark \ref{rmk:total variation uniformly bounded compact},  for any fixed $s,t\in I$, $s\leq t$, the total variation of $P_{s,t}(x,\d y)$ is  uniformly bounded by a finite constant independent of $x\in\RR$. Hence,  \eqref{eq:functional defining multi-time measure} defines a bounded linear functional on $C(U_{t_1}\times\dots\times U_{t_n})$, which, by  the Riesz representation theorem (see for example \cite[Theorem 6.19]{rudin1987real}), is the integration  with respect to a unique finite signed measure $\pi_{t_1,\dots,t_n}$ that was defined in \eqref{eq:definition of multi-time}. 
    
    One can observe from Remark \ref{rmk:total mass 1} that $\pi_{t_1,\dots,t_n}$ has total mass $1$, i.e. $\int_{\RR^n}\pi_{t_1,\dots,t_n}(\d x_1,\dots,\d x_n)=1$.
\end{remark}

\section{Proof of Theorem \ref{thm: stationary measure in terms of Askey--Wilson integral}}
\label{sec:Proof of Theorem stationary measure in terms of Askey--Wilson integral}
The proof follows a similar procedure as \cite[Section 2.1]{bryc2017asymmetric}. We first extend the `projection formula' in  \cite{bryc2010askey} to a larger parameter range involving Askey--Wilson signed measures. By combining the USW representation of the DEHP algebra (see Section \ref{subsubsec:USW representation} for a brief review), the orthogonality of $\pi_t(\d x)$, the projection formula of $P_{s,t}(x,\d y)$ and three-term recurrence of Askey--Wilson polynomials, we are able to peel off the linear terms in  $\ll W|(\E+t_1\D)\times\dots\times(\E+t_n\D)|V\rr$ one by one, from left to right.

We will use the re-normalized version of Askey--Wilson polynomials as in \cite{bryc2010askey}:
\begin{definition}\label{def:bw polynomials}
    Assume $a,b,c,d\in\mathbb{C}$ and $ab, abcd\notin\{q^{-l}:l\in\NN\}$. We define $\bw_m(x)=(ab)_m^{-1}\tw_m(x)$ for $m\in\NN$, which are also called Askey--Wilson polynomials. Note that $\bw_0(x)=1$.  
\end{definition} 

The following orthogonality of Askey--Wilson polynomials is a simple corollary of Theorem \ref{thm:extending orthogonality}:
\begin{corollary}\label{cor:orthogonality for bar w}
    Assume $(a,b,c,d)\in\Omega$.  Then we have: 
$$\int_\mathbb{R}\nu(\d x;a,b,c,d)\bw_m(x)\bw_k(x)=0 \quad\text{for all } m,k\in\NN, m\neq k.$$
\be\label{eq:orthogonality of bar w}
\int_\mathbb{R}\nu(\d x;a,b,c,d)\bw_m(x)=0\quad\text{for all } m\in\mathbb{N}_+.
\ee
\end{corollary}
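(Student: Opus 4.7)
This is a direct rescaling of Theorem \ref{thm:extending orthogonality}; the plan has three short steps and I do not expect any genuine obstacle.

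First, I would verify that the normalizing factor $(ab)_m^{-1}$ in Definition \ref{def:bw polynomials} is well-defined for every $(a,b,c,d)\in\Omega$, even though the hypothesis $ab\notin\{q^{-l}:l\in\NN\}$ is not explicitly part of Definition \ref{def:region omega}. Condition \textbf{(4)} of Definition \ref{def:region omega} forces $a,b\in\RR$ with $ab<1$, and since $0\le q<1$ we have $q^j(ab)<1$ for every $j\in\NN$, so each factor $1-q^j ab$ of $(ab)_m=\prod_{j=0}^{m-1}(1-q^j ab)$ is nonzero. Hence $\bw_m(x)=(ab)_m^{-1}\tw_m(x)$ is a well-defined nonzero rescaling of $\tw_m$.

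Second, for the identity with $m\neq k$, I would pull the scalar out of the integral and invoke the extended orthogonality:
\[
\int_{\RR}\nu(\d x;a,b,c,d)\bw_m(x)\bw_k(x)=\frac{1}{(ab)_m(ab)_k}\int_{\RR}\nu(\d x;a,b,c,d)\tw_m(x)\tw_k(x),
\]
and the right-hand side of \eqref{eq:orthogonality on R AW} in Theorem \ref{thm:extending orthogonality} carries the factor $\delta_{mk}$ and therefore vanishes.

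Third, for the vanishing of the first moment, I would note that the recurrence \eqref{eq:three term recurrence} together with the initial conditions $w_{-1}=0$, $w_0=1$ gives $\tw_0\equiv 1$, hence $\bw_0\equiv 1$. Therefore for $m\in\mathbb{N}_+$,
\[
\int_{\RR}\nu(\d x;a,b,c,d)\bw_m(x)=\int_{\RR}\nu(\d x;a,b,c,d)\bw_m(x)\bw_0(x)=0
\]
by the case $k=0\ne m$ of the first identity. All the serious analytic work has already been carried out in Theorem \ref{thm:extending orthogonality} (extending the orthogonality from $\WO$ to $\Omega$), and this corollary is purely a renormalization.
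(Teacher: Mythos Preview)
Your proposal is correct and matches the paper's approach: the paper simply declares this a ``simple corollary'' of Theorem \ref{thm:extending orthogonality} without further details, and your three-step argument (checking $(ab)_m\ne 0$ via condition \textbf{(4)}, rescaling, and specializing to $\bw_0\equiv 1$) is exactly the intended justification.
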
 

\subsection{Projection formula}
A projection formula for Askey--Wilson polynomials was introduced in \cite{nassrallah1985projection} and later generalized in \cite{bryc2010askey}. We show that \cite[Proposition 3.6]{bryc2010askey} can be extended to a larger parameter range where $P_{s,t}(x,\d y)$ are no longer probability measures, but signed measures.
\begin{proposition}\label{prop:projection formula}
Assume $A,C\geq0$, $B,D\in(-1,0]$, $ABCD\notin\{q^{-l}:l\in\NN\}$, and  $I=I(A,B,C,D)$.
For $n\in\NN$, we consider polynomials :
$$p_m(x;t)=t^{m/2}\bw_m(x;A\sqrt{t},B\sqrt{t},C/\sqrt{t},D/\sqrt{t}),$$
where $\bw_m(x;a,b,c,d)$ is introduced in Definition \ref{def:bw polynomials}. 

Then for any $s,t\in I$, $s\leq t$, $x\in U_s$ and $m\in\NN$,
\be\label{eq:projection formula}\int_\mathbb{R}p_m(y;t)\bP_{s,t}(x,\d y)=p_m(x;s).\ee 
\end{proposition}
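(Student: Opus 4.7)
The strategy is to extend the proof of \cite[Proposition~3.6]{bryc2010askey}, which establishes this identity in the probability-measure setting, to the present signed-measure setting. The key observation is that the argument in \cite{bryc2010askey} is algebraic and uses only three ingredients: (i) the orthogonality of Askey--Wilson polynomials, (ii) the total mass of $\nu$ being $1$, and (iii) the three-term recurrence~\eqref{eq:three term recurrence}. All three are now available on the relevant parameter region, thanks to Theorem~\ref{thm:extending orthogonality}, Corollary~\ref{cor:orthogonality for bar w}, and Corollary~\ref{cor:1}. Applicability to the kernel
\[
P_{s,t}(x,\d y)=\nu\lb \d y;A\sqrt{t},B\sqrt{t},\sqrt{s/t}(x+\sqrt{x^2-1}),\sqrt{s/t}(x-\sqrt{x^2-1})\rb
\]
is confirmed by Proposition~\ref{prop:open ASEP parameters in Omega}, which places the four parameters inside $\Omega$ whenever $s,t\in I$ and $x\in U_s$.

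Concretely, set $a=A\sqrt{t}$, $b=B\sqrt{t}$, $c=\sqrt{s/t}(x+\sqrt{x^2-1})$, $d=\sqrt{s/t}(x-\sqrt{x^2-1})$. The polynomial inside $p_m(\cdot;t)$ has degree $m$ in $y$, so expand it via Askey--Wilson connection coefficients in the basis orthogonal for $P_{s,t}(x,\d y)$:
\[
\bw_m\lb y;A\sqrt{t},B\sqrt{t},C/\sqrt{t},D/\sqrt{t}\rb=\sum_{k=0}^{m}\alpha_k(x)\,\bw_k(y;a,b,c,d).
\]
Integrate against $P_{s,t}(x,\d y)=\nu(\d y;a,b,c,d)$. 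By the orthogonality \eqref{eq:orthogonality of bar w} and the total mass identity, only the $k=0$ term survives, yielding
\[
\int_{\RR}p_m(y;t)\,P_{s,t}(x,\d y)=t^{m/2}\,\alpha_0(x).
\]
The proof then reduces to the identification $t^{m/2}\alpha_0(x)=s^{m/2}\bw_m(x;A\sqrt{s},B\sqrt{s},C/\sqrt{s},D/\sqrt{s})=p_m(x;s)$.

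The main obstacle is this final identification, a purely algebraic identity in the parameters $(A,B,C,D,q,s,t,x)$. Two routes are available. One is to invoke the explicit closed form for the $k=0$ connection coefficient and match it directly, exactly as in~\cite{bryc2010askey}. A cleaner alternative is induction on $m$: the base case $m=0$ is simply $\int P_{s,t}(x,\d y)=1$ (Corollary~\ref{cor:1}), while the inductive step exploits the three-term recurrence together with the observation that, among the symmetric functions of the quadruple $(A\sqrt{t},B\sqrt{t},C/\sqrt{t},D/\sqrt{t})$, the four cross products $AC, AD, BC, BD$ and the full product $ABCD$ are invariant under $t\mapsto s$. The remaining $t$-dependence enters only through $ab=ABt$ and $cd=CD/t$ and is precisely absorbed by the $t^{m/2}$ prefactor in $p_m(\cdot;t)$, so that the recurrence for $p_m(y;t)$ matches the recurrence for $p_m(x;s)$ after integration against $P_{s,t}(x,\d y)$. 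Since positivity of $\nu$ is never used---only orthogonality and total mass, which have been secured on $\Omega$---the transfer of the argument from~\cite{bryc2010askey} is a matter of verifying that each step remains algebraically valid for the signed measures constructed in Section~\ref{subsec:The multi-time Askey--Wilson signed measures}.
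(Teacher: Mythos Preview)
Your overall plan is correct and close in spirit to the paper's proof: both arguments pass through the Askey--Wilson connection formula with the shared pair $(a,b)=(A\sqrt t,B\sqrt t)$, rely on the orthogonality in Corollary~\ref{cor:orthogonality for bar w} (now valid on $\Omega$), and use Corollary~\ref{cor:1} for the base case. The paper, however, runs the connection formula in the opposite direction: it introduces $Q_m(y;x,t,s):=t^{m/2}\bw_m(y;a,b,c,d)$ (the orthogonal polynomials for $P_{s,t}$), expands $Q_m$ in the basis $p_r(\cdot;t)$, and observes that the resulting coefficients $b_{m,r}$ are independent of $t$. Setting $y=x$, $t=s$ (where $Q_m(x;x,s,s)=0$ because $cd=1$) then gives $Q_m(y;x,t,s)=\sum_{r=1}^m b_{m,r}(x,s)\bigl(p_r(y;t)-p_r(x;s)\bigr)$. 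Integrating the single relation $\int Q_{m+1}\,P_{s,t}=0$ and invoking the induction hypothesis for $r\le m$ isolates the $r=m+1$ term with the nonzero leading coefficient $b_{m+1,m+1}$, closing the induction without ever computing $\alpha_0$ explicitly. This avoids the ${}_4\phi_3$ evaluation your Route~1 would require.

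Your Route~2, as written, has a gap. Integrating the three-term recurrence for $p_m(y;t)$ against $P_{s,t}(x,\d y)$ produces the term $\int_{\RR} 2y\,p_m(y;t)\,P_{s,t}(x,\d y)$, and there is no a priori reason this equals $2x\int p_m(y;t)\,P_{s,t}(x,\d y)$; that identity is essentially what you are trying to prove. The $t$-invariance of $AC,AD,BC,BD,ABCD$ ensures that the recursion coefficients for $p_m(\cdot;t)$ and $p_m(\cdot;s)$ agree (after the $t^{m/2}$ rescaling), but this alone does not close the induction. The paper's trick---using orthogonality of $Q_{m+1}$ rather than the recurrence directly---is precisely what bridges this gap.
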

\begin{proof}
We only need to prove case $s<t$.
As in \cite{bryc2010askey}, we introduce a family of polynomials  $(Q_m(\cdot;x,t,s))_{m=1}^{\infty}$: For $m\in\NN$:
    \be\label{eq:def of Q}Q_m(y;x,t,s):=t^{m/2} \bw_m\lb y;A\sqrt{t},B\sqrt{t},\sqrt{\frac{s}{t}} \lb x+\sqrt{x^2-1} \rb, \sqrt{\frac{s}{t}} \lb x-\sqrt{x^2-1} \rb\rb.\ee
We first prove an algebraic lemma: 
\begin{lemma}\label{lem:algid}
    For $m\geq1$, 
\be \label{eq:connection formula algebraic identity}
Q_m(y;x,t,s)=\sum_{r=1}^m b_{m,r}(x,s) \lb p_r(y;t)-p_r(x;s) \rb,
\ee
where $b_{m,r}(x,s)$ does not depend on $t$ for $1\leq r\leq m$, additionally, 
$b_{m,m}(x,s)$ does not depend on $x$, and $b_{m,m}(x,s)\ne0$.\end{lemma}
     
\begin{proof}[Proof of Lemma \ref{lem:algid}]
We begin by recalling \cite[Theorem A.2]{bryc2010askey} which re-states a special case of \cite[formula (6.1)]{askey1985some}. For any $a,b,c,d,\tc,\td\in\mathbb{C}$, $a\neq 0$ and $ab, abcd\notin\{q^{-l}:l\in\NN\}$, we have, for $m\in\NN$:
\be\label{eq:connection formula}
\bw_m(y;a,b,\widetilde c,\widetilde d)=\sum_{r=0}^m \bc_{r,m}
\bw_r(y; a,b,c,d),
\ee
where 
 $$
\bc_{r,m} =  (-1)^r q^{r(r+1)/2} \times\frac{ (q^{-m},q^{m-1}a b \widetilde c \widetilde d
)_r(a\widetilde c,a\widetilde d)_{m}}{a^{m-r}
(q,q^{r-1}abcd,a\widetilde c,a\widetilde d )_r} 
\times{}_4\phi_3\left[ \genfrac{}{}{0pt}{}{q^{r-m},ab\widetilde{c}\widetilde{d}q^{m+r-1},acq^r,adq^r}        {abcdq^{2r},a\widetilde{c}q^r,a\widetilde{d}q^r},q \right].
 $$
Here we use the basic hypergeometric function:
$$
_r\phi_s\left[ \genfrac{}{}{0pt}{}{a_1,\dots,a_r}        {b_1,\dots,b_s},z\right]={_r\phi_s}\left[ \genfrac{}{}{0pt}{}{a_1,\dots,a_r}        {b_1,\dots,b_s};q,z\right]=\sum_{k=0}^{\infty}\frac{(a_1,\dots,a_r;q)_k}{(b_1,\dots,b_s,q;q)_k}\lb(-1)^kq^{k(k-1)/2}\rb^{1+s-r}z^k.
 $$
Inserting 
$$a=A\sqrt{t},\quad b=B\sqrt{t},\quad c=C/\sqrt{t},\quad d=D/\sqrt{t},\quad \tc=\sqrt{\frac{s}{t}} \lb x+\sqrt{x^2-1} \rb,\quad \td=\sqrt{\frac{s}{t}} \lb x-\sqrt{x^2-1} \rb$$
 into \eqref{eq:connection formula} we get 
 \be\label{eq:in proof of connection formula}
 Q_m(y;x,t,s)=\sum_{r=0}^m b_{m,r} p_r(y;t),\ee 
 where $b_{m,r}=t^{(m-r)/2}\bc_{r,m}$. 
Coefficients $b_{m,r}$ do not depend on $t$ as
$t^{(m-r)/2}/a^{m-r}=A^{r-m}$, and $t$ cancels out in all other entries
on the right-hand side of $\bc_{r,m}$ since 
\begin{multline*}
ab\widetilde c\widetilde d = ABs,\quad abcd=ABCD, \quad
a\widetilde c = A\sqrt{s}\lb x+\sqrt{x^2-1}\rb,\quad ac=AC, \\ \quad
a\widetilde d = A\sqrt{s}\lb x-\sqrt{x^2-1}\rb,\quad a d=AD. 
\end{multline*}
 Moreover, 
$
b_{m,m}(x,s)=(-1)^m q^{m(m+1)/2}\frac
{(q^{-m},q^{m-1}ABs)_m}{(q,q^{m-1}ABCD)_m}
$
and thus it is nonzero and does not depend on $x$. 

We note that $\tc\td=1$. By an induction using three-term recurrence \eqref{eq:three term recurrence} we get $Q_m(x;x,s,s)=0$ for $m\geq 1$. Referring to   \eqref{eq:in proof of connection formula} with $y=x$ and $t=s$ we get $\sum_{r=0}^m b_{m,r}(x,s) p_r(x;s)=0$.
Subtracting the latter from \eqref{eq:in proof of connection formula} concludes the proof. 
\end{proof}

 Now we are ready to prove \eqref{eq:projection formula}. We use induction with respect to $m$. 
    The $m=0$ case follows from \eqref{eq:mass equal 1}. Suppose that \eqref{eq:projection formula}  holds  for some $m\geq0$. In view of definitions \eqref{eq:transition AW signed} and \eqref{eq:def of Q}, using the orthogonality \eqref{eq:orthogonality of bar w}, we have \be\label{eq:orthogonality bw in the proof}\int_\mathbb{R}Q_{m+1}(y;x,t,s) \bP_{s,t}(x,\d y)=0.\ee
By the identity \eqref{eq:connection formula algebraic identity}, 
$Q_{m+1}(y;x,t,s)=\sum_{r=1}^{m+1} b_{m+1,r}(x,s) \lb p_r(y;t)-p_r(x;s) \rb$. We plug this into \eqref{eq:orthogonality bw in the proof}. Thus using the induction hypothesis, we obtain
\begin{equation*}
    \begin{split}
        0&=b_{m+1,m+1}(x,s)\int_\mathbb{R}\lb p_{m+1}(y;t)-p_{m+1}(x;s) \rb
\bP_{s,t}(x,\d y)\\
&=b_{m+1,m+1}(x,s) \lb\int_\mathbb{R}p_{m+1}(y;t) \bP_{s,t}(x,\d y)
-p_{m+1}(x;s) \rb.
    \end{split}
\end{equation*} 
Since $b_{m+1,m+1}(x,s)\neq 0$, we see that  \eqref{eq:projection formula}   holds  for  $m+1$ case which ends  the proof.
\end{proof}
 
\subsection{USW representation}\label{subsubsec:USW representation}
A representation of the $\dehp$ algebra for general parameters $q,\alpha,\beta,\gamma,\delta$ was introduced in \cite{uchiyama2004asymmetric}, called the $\usw$ representation. We   review a slightly different version from  \cite{bryc2017asymmetric}.

Assume $\alpha,\gamma>0$, $\beta,\delta\in(-1,0]$ and $q\in[0,1)$. We recall that $A,B,C,D$ are given by \eqref{eq:defining ABCD} and that $A, C\geq0$ and $B,D\in(-1,0]$. Additionally we assume that $ABCD\notin\{q^{-l}:l\in\NN\}$. For $m\in\NN$, we define $\alpha_m,\beta_m,\gamma_m,\delta_m,\ep_{m},\varphi_{m}$ in terms of $(A,B,C,D,q)$ by the formulas given in \cite[page 1243]{bryc2010askey}:
\be 
\begin{split}
    \alpha_m&=-AB q^m \beta_m ,\\
\beta_m&=\frac{1-A B C D q^{m-1} }
{\sqrt{1-q} (1-A B C D q^{2 m} ) (1-A B
C D q^{2 m-1} )} ,\\
\varepsilon_m&=\frac{(1-q^m)
(1-A C
q^{m-1} ) (1-AD
q^{m-1} ) (1-B C
q^{m-1} ) (1-B D q^{m-1} )
}{\sqrt{1-q} (1-A B C D q^{2
m-2} ) (1-A B C D q^{2 m-1} )} ,\\
\varphi_m&=-CD q^{m-1}\varepsilon_m ,\\
\gamma_m&= \frac{A}{\sqrt{1-q}}-\frac{\alpha_m}{A} (1-AC q^m)(1-ADq^m)-
\frac{A\varepsilon_m}{(1-AC q^{m-1})(1-ADq^{m-1})},\\
\delta_m&=\frac{1}{A\sqrt{1-q}}-\frac{\beta_m}{A} (1-AC q^m)(1-ADq^m)-
\frac{A\varphi_m}{(1-AC q^{m-1})(1-ADq^{m-1})}.
\end{split}
\ee 
We note that $\gamma_m$ and $\delta_m$ above are well-defined by the choices of $\ep_m$ and $\varphi_m$.
These formulas are also well-defined for $q=0$ and/or $A=0$ by continuity.

Consider  infinite  tridiagonal matrices:
$$\E=\frac{1}{1-q}\mathbf{I}+\frac{1}{\sqrt{1-q}}\mathbf{y},\quad \D=\frac{1}{1-q}\mathbf{I}+\frac{1}{\sqrt{1-q}}\mathbf{x},$$
where $\bI$ denotes the infinite identity matrix,  
\begin{equation*}
      \mathbf{x}=\left[\begin{matrix}
        \gamma_0 & \ep_1 & 0 &\dots \\
        \alpha_0 &  \gamma_1& \ep_2 &\dots \\
        0 & \alpha_1 & \gamma_2& \dots\\
        \vdots &\vdots & \vdots& \ddots\\
      \end{matrix}\right], \quad \mathbf{y}=\left[\begin{matrix}
        \delta_0 & \varphi_1 & 0 &\dots \\
        \beta_0 & \delta_1 & \varphi_2 & \dots\\
        0 & \beta_1 & \delta_2 & \dots\\
        \vdots & \vdots& \vdots& \ddots\\
      \end{matrix}\right]\,
    \end{equation*}
and  infinite vectors 
\be\label{eq:W and V}\ll W|=(1,0,0,\dots),\quad |V\rr=(1,0,0,\dots)^T.\ee 
As proved in \cite[Section 2.1]{bryc2017asymmetric}, they satisfy  conditions of  the $\dehp$ algebra \eqref{eq:DEHP algebra} with parameters $(q,\alpha,\beta,\gamma,\delta)$.

\subsection{Proof of Theorem \ref{thm: stationary measure in terms of Askey--Wilson integral}}
We define the polynomial $\Pi_n$ by 
$$
\Pi_n\lb t_1,\dots,t_n\rb=(1-q)^n\ll W|(\E+t_1\D)\times\dots\times(\E+t_n\D)|V\rr,
$$
where $\D$, $\E$, $\ll W|$ and $|V\rr$ are specified above. By the matrices $\D$ and $\E$ being tridiagonal and the specific forms \eqref{eq:W and V} of $\ll W|$ and $|V\rr$, one could observe that $\Pi_n$ is actually a polynomial in $t_1,\dots,t_n$ with coefficients depending on $A,B,C,D,q$. As in \cite{derrida1993exact} (see \eqref{eq:MPA open ASEP}), since $\D$, $\E$, $\ll W|$ and $|V\rr$ satisfy the $\dehp$ algebra \eqref{eq:DEHP algebra}, the joint generating function of  stationary measure can be written in terms of $\Pi_n$ as \eqref{eq:MPA open ASEP1} in Theorem \ref{thm: stationary measure in terms of Askey--Wilson integral}. The denominator of \eqref{eq:MPA open ASEP}  (and of \eqref{eq:MPA open ASEP1})  being nonzero can be  guaranteed by our assumption $ABCD\notin\{q^{-l}:l\in\NN\}$, following the arguments in \cite[Appendix A]{mallick1997finite} (see also \cite[Remark 3]{bryc2019matrix}).

Next we will prove the characterization \eqref{PIN} of $\Pi_n$ as integrations with Askey--Wilson signed measures. 
We take $I=I(A,B,C,D)$ as in Proposition \ref{prop:open ASEP parameters in Omega}. In the following we always assume $t_1\leq\dots\leq t_n$ in $I$. 

 We introduce the row vector of polynomials 
    $$\ll\p_t(x)|:=(p_0(x,t),p_1(x,t),\dots).$$
    By Corollary \ref{cor:1} and Corollary \ref{cor:orthogonality for bar w}, we have $$\int_\RR\pi_t(\d x)\ll\p_t(x)|=\ll W|.$$ By Proposition \ref{prop:projection formula}, for $x\in U_s$ we have $$\int_\RR P_{s,t}(x,\d y)\ll\p_t(y)|=\ll\p_s(x)|.$$  
    Clearly, $\ll\p_t(x)|V\rr=1$. The three-term recurrence \eqref{eq:three term recurrence} can also be written in the vector form:
    \be\label{eq:vec}\frac{2\sqrt{t}}{\sqrt{1-q}}x\ll\p_t(x)|=\ll\p_t(x)|(t\x+\y),\ee whence  we have  $$\ll\p_t(x)|\lb(1+t)\bI+\sqrt{1-q}(t\x+\y)\rb=\lb1+t+2\sqrt{t}x\rb\ll\p_t(x)|.$$
We remark that \eqref{eq:vec} coincides with
\cite[equation (1.16)]{bryc2017asymmetric}
modulo the transformation $\ll\p_t(x)|=\Big{\ll}\mathbf{r}_t\lb \frac{2\sqrt{t}}{\sqrt{1-q}}x\rb\Big{|}$.
    
    Using the above relations, we obtain:
    \begin{equation*}
        \begin{split}
            \Pi_n(t_1,\dots,t_n)&=\int_\RR\pi_{t_1}(\d x_1)\ll\p_{t_1}(x_1)|\prod_{i=1}^{\substack{n\\\longrightarrow}}\lb(1+t_i)\bI+\sqrt{1-q}(t_i\x+\y)\rb|V\rr\\
            &=\int_\RR\pi_{t_1}(\d x_1)\lb1+t_1+2\sqrt{t_1}x_1\rb\ll\p_{t_1}(x_1)|\prod_{i=2}^{\substack{n\\\longrightarrow}}\lb(1+t_i)\bI+\sqrt{1-q}(t_i\x+\y)\rb|V\rr\\
            &=\int_\RR\pi_{t_1}(\d x_1)\lb1+t_1+2\sqrt{t_1}x_1\rb
            \int_\RR P_{t_1,t_2}(x_1,\d x_2)
            \ll\p_{t_2}(x_2)|\prod_{i=2}^{\substack{n\\\longrightarrow}}\lb(1+t_i)\bI+\sqrt{1-q}(t_i\x+\y)\rb|V\rr\\
            &=\dots\\
            &=\int_{\RR^n}\prod_{i=1}^n(1+t_i+2\sqrt{t_i}x_i)\bpi_{t_1}(\d x_1)\bP_{t_1,t_2}(x_1,\d x_2)\dots \bP_{t_{n-1},t_n}(x_{n-1},\d x_n)\ll\p_{t_n}(x_n)|V\rr\\
            &=\int_{\RR^n}\prod_{i=1}^n(1+t_i+2\sqrt{t_i}x_i)\pi_{t_1,\dots,t_n}(\d x_1,\dots,\d x_n),
        \end{split}
    \end{equation*}
 which concludes the proof.
\begin{remark}
    We remark that the {\em formal} Chapman–Kolmogorov equation continues to hold for the signed measures $P_{s,t}(x,\d y)$. It is not needed in this paper so we omit the details. Moreover, we point out that the Markov property of the Askey--Wilson processes (i.e. the Chapman–Kolmogorov equation) is {\em not} actually needed in the exploitation of this method in the fan region in \cite{bryc2017asymmetric,bryc2019limit,bryc2022asymmetric}. 
\end{remark} 
 
\section{Limit fluctuations in high/low density phases: Proof of Theorem \ref{thm:fluctuations}}
\label{sec:limit fluctuation}
In this section we will prove Theorem \ref{thm:fluctuations}. The proof has a similar structure as \cite[Section 3]{bryc2019limit} in the fan region, but since we now have Askey--Wilson signed measures, we will need a bound on their total variations given in Appendix \ref{sec:total variation bounds}. 
We will first prove the result in the high density phase, and in the low density phase, it follows by the particle-hole duality. 
\subsection{Proof in the high density phase}
In this subsection we prove Theorem \ref{thm:fluctuations} in the high density phase 
\be\label{eq:high density phase condition}A,C\geq0,\quad B,D\in(-1,0],\quad A>C,\quad A>1.\ee 
We first  adopt a method similar to \cite[Section 3.1]{bryc2019limit} to prove the limit fluctuations under a technical constraint. Then we extend the result to the whole high density phase adopting the `stochastic sandwiching' argument from \cite[Lemma 5.1]{corwin2021stationary}. 

\subsubsection{Proof in a generic sub-region}
In this step we prove Theorem \ref{thm:fluctuations} in a generic sub-region of the high density phase \eqref{eq:high density phase condition} specified by $A/C,ABCD\notin\{q^{-l}:l\in\NN\}$. 

Using \cite[Theorem A.1]{bryc2019limit} (see  earlier works \cite{farrell2006techniques,hoffman2017probability,mukherjea2006note}), one can reduce the proof  of convergence in finite dimensional distribution to the convergence of the  multi-point Laplace transform: 
\begin{proposition}[Theorem A.1 in \cite{bryc2019limit}]
\label{thm:Theorem A.1 from bryc wang}
    Let $\boldsymbol{X}^{(n)}=\lb X_1^{(n)},\dots,X_d^{(n)}\rb$ be a sequence of random variables with Laplace transform 
    $$L_n(\boldsymbol{z})=L_n(z_1,\dots,z_d)=\mathbb{E}\exp\lb\sum_{i=1}^nz_iX_i^{(n)}\rb.$$
     Assume that on an open subset of $\RR^d$ Laplace transforms $L_n$ are finite and converge point-wise to a function $L$.  If on this open subset,  $L$  is the Laplace transform of a random variable $\boldsymbol{Y}=\lb Y_1,\dots,Y_d\rb$, then $\boldsymbol{X}^{(n)}$ converges in distribution to $\boldsymbol{Y}$.
\end{proposition}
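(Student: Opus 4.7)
The plan is to establish the standard multidimensional continuity theorem for Laplace transforms via the usual tightness-plus-uniqueness strategy. I would first prove tightness of $\{\mathbf{X}^{(n)}\}$ using uniform bounds on $L_n$ over a compact subset of $U$, then identify any subsequential weak limit as $\mathbf{Y}$ by appealing to the fact that the Laplace transform of a probability measure on $\RR^d$ is the restriction to $\RR^d$ of a function holomorphic on a complex tube over the interior of its real finiteness domain, and hence is determined by its values on any nonempty open subset of $\RR^d$.

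For the uniform-boundedness step, fix $\mathbf{z}_0 \in U$ and take a small closed cube $K \subset U$ centered at $\mathbf{z}_0$. Pointwise convergence $L_n(\mathbf{z}) \to L(\mathbf{z}) < \infty$ at each vertex of $K$, combined with log-convexity of the Laplace transform (an immediate consequence of H\"older's inequality), propagates to $\sup_n \sup_{\mathbf{z}\in K} L_n(\mathbf{z}) < \infty$. To extract tightness of $\{\mathbf{X}^{(n)}\}$ from this joint bound I would work with the tilted measures $d\widetilde{\mu}_n(\mathbf{x}) = L_n(\mathbf{z}_0)^{-1} e^{\mathbf{z}_0\cdot\mathbf{x}}\, d\mu_n(\mathbf{x})$, whose Laplace transforms $\widetilde{L}_n(\mathbf{w}) = L_n(\mathbf{z}_0+\mathbf{w})/L_n(\mathbf{z}_0)$ are uniformly bounded on a full neighborhood of the origin, yielding tightness of $\{\widetilde{\mu}_n\}$ via an exponential-moment estimate at $\mathbf{0}$. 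Identifying the candidate limit of the tilted laws as $d\widetilde{\mu}(\mathbf{x}) = L(\mathbf{z}_0)^{-1} e^{\mathbf{z}_0\cdot\mathbf{x}}\, d\mu(\mathbf{x})$, where $\mu = \mathrm{Law}(\mathbf{Y})$ is a genuine probability measure of full mass one, then transfers tightness back to $\{\mu_n\}$.

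Given any subsequential weak limit $\mathbf{X}^{(\infty)}$ along $n_k \to \infty$, the uniform bound on $L_{n_k}$ over a slightly larger cube than needed supplies uniform integrability of $\exp(\mathbf{z}\cdot\mathbf{X}^{(n_k)})$ at every $\mathbf{z}$ in the interior of $K$. By weak convergence and uniform integrability,
\[
\mathbb{E}\bigl[\exp(\mathbf{z}\cdot\mathbf{X}^{(\infty)})\bigr] = \lim_{k\to\infty} L_{n_k}(\mathbf{z}) = L(\mathbf{z}) = \mathbb{E}\bigl[\exp(\mathbf{z}\cdot\mathbf{Y})\bigr]
\]
for $\mathbf{z}$ in an open subset of $\RR^d$. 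Analytic continuation to the complex tube forces the two Laplace transforms to agree everywhere they are jointly defined, in particular on the imaginary axis, so the characteristic functions of $\mathbf{X}^{(\infty)}$ and $\mathbf{Y}$ coincide and $\mathbf{X}^{(\infty)} \stackrel{d}{=} \mathbf{Y}$. As every subsequential limit has the same law, the full sequence converges to $\mathbf{Y}$ in distribution.

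The main obstacle is the passage from tightness of the tilted measures $\widetilde{\mu}_n$ back to tightness of the original $\mu_n$, because the exponential factor $e^{\mathbf{z}_0\cdot\mathbf{x}}$ can either inflate or suppress mass depending on direction, and mass of $\mu_n$ drifting to infinity in a direction where $\mathbf{z}_0\cdot\mathbf{x}\to -\infty$ is invisible to the tilted measure alone. Resolving this requires exploiting that $U$ contains the full cube $K$ around $\mathbf{z}_0$ (so that exponential-moment bounds are available in all coordinate directions), or alternatively using that $L$ is already the Laplace transform of a probability measure of full mass to rule out leakage; either way, the argument is standard but not purely mechanical.
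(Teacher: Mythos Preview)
The paper does not supply its own proof of this proposition: it is quoted verbatim as Theorem A.1 from \cite{bryc2019limit}, with pointers to the earlier works \cite{farrell2006techniques,hoffman2017probability,mukherjea2006note} where the argument originates. So there is no ``paper's proof'' to compare against; the authors treat this as a black-box input.

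Your sketch follows the standard tightness-plus-identification route and is essentially the argument found in those references. You have correctly isolated the one genuinely nontrivial step, namely transferring tightness from the exponentially tilted measures $\widetilde\mu_n$ back to the original $\mu_n$, and your proposed resolutions (using that $U$ contains a full cube so exponential moments are available symmetrically in every coordinate direction, or using that the limit $L$ has full mass one to preclude leakage) are both workable. The analytic-continuation step for uniqueness is also standard. So your proposal is a correct outline of the classical proof, and nothing more is required here since the paper itself simply cites the result.
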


We first recall the Laplace transform of the Brownian motion.
For $x_0=0<x_1<\dots<x_d=1$ and variables
 $c_1,\dots,c_d>0$, we denote $s_k = c_k+\cdots+c_d$ for $k=1,\dots,d$. We have:
$$
\mathbb{E}\le \exp\lb-\sum_{k=1}^d{c_k}\B({x_k})\rb\re= \mathbb{E}\le\exp\lb-\sum_{k=1}^d{s_k}(\B({x_k})-\B({x_{k-1}}))\rb\re = \exp\lb\frac{1}{2}\sum_{k=1}^{d}s_k^2(x_k-x_{k-1})\rb.
$$
We consider the Laplace transform of centered height function $h_n^{\HH}(x)$ (given in \eqref{eq:centered height functions}) with argument $\c=(c_1,\dots,c_d)\in\RR_+^d$, where we denote $\xx=(x_1,\dots,x_d)$:
\be \label{eq:definition of Laplace transform of height function}
\ph_{\xx,n}^{\HH}(\c):=\mathbb{E}_{\mu_n}\le\exp\lb-\sum_{k=1}^dc_kh_n^{\HH}(x_k)\rb\re.
\ee 

By Proposition \ref{thm:Theorem A.1 from bryc wang}, it suffices to prove that,   for all $\c$ from an open subset of $\RR_+^d$,
\be\label{eq:suffices to prove high density}
\lim_{n\rightarrow\infty}\ph_{\xx,n}^{\HH}\lb\frac{\c}{\sqrt{n}}\rb
=\exp\lb\frac{A}{2(1+A)^2}\,\sum_{k=1}^{d}\,s_k^2(x_k-x_{k-1})\rb.
\ee

We first write \eqref{eq:definition of Laplace transform of height function} explicitly,  denoting $n_k:=\floor{nx_k}$ for $k=0,\dots,d$ (note that $n_0=0$ and $n_d=n$):
\begin{equation}\label{eq:expression of phi}
    \begin{split}
        \ph_{\xx,n}^{\HH}(\c)&=\mathbb{E}_{\mu_n}\le\exp\lb-\sum_{k=1}^d\sum_{i=n_{k-1}+1}^{n_k}\lb\tau_i-\frac{A}{1+A}\rb\lb c_k+\dots+c_d\rb\rb\re\\
        &=\exp\lb\frac{A}{1+A}\,\sum_{k=1}^d\,s_k(n_k-n_{k-1})\rb
        \mathbb{E}_{\mu_n}\le\prod_{k=1}^{d}\prod_{i=n_{k-1}+1}^{n_k}\lb e^{-s_k}\rb^{\tau_i}\re\\
        &=\exp\lb\frac{A}{1+A}\,\sum_{k=1}^d\,s_k(n_k-n_{k-1})\rb
\frac{1}{Z_n}
\Pi_n\lb\underbrace{e^{-s_1},\dots,e^{-s_1}}_{n_1},\underbrace{e^{-s_2},\dots,e^{-s_2}}_{n_2-n_1},\dots,\underbrace{e^{-s_{d}},\dots,e^{-s_{d}}}_{n_{d}-n_{d-1}}\rb,
    \end{split}
\end{equation}
where $Z_n=\Pi_n(1,\dots,1)$ and in the last step we used \eqref{eq:MPA open ASEP1} of Theorem \ref{thm: stationary measure in terms of Askey--Wilson integral}.

To prove \eqref{eq:suffices to prove high density}, we first express $\ph_{\xx,n}^{\HH}(\c/\sqrt{n})$ as an integral.
Assume \eqref{eq:high density phase condition} and $A/C,ABCD\notin\{q^{-l}:l\in\NN\}$. We choose the open time interval $I$ (containing $1$) from Proposition \ref{prop:open ASEP parameters in Omega}.
We write $s_{k,n}:=s_k/\sqrt{n}$ and $t_{k,n}=e^{-s_{k,n}}$ for $k=1,\dots,d$. 
For sufficiently large $n$, we have $t_{1,n}\leq\dots\leq t_{d,n}$ in $I$.
By \eqref{PIN} of Theorem \ref{thm: stationary measure in terms of Askey--Wilson integral}, one can write:

\be \label{eq:integral expression for Pi}
\begin{split}
    \ph_{\xx,n}^{\HH}\lb\frac{\c}{\sqrt{n}}\rb&=\exp\lb\frac{A}{1+A}\,\sum_{k=1}^d\,s_{n,k}(n_k-n_{k-1})\rb
\frac{1}{Z_n}\\
&\times\int_{\RR^{d}}\prod_{k=1}^{d}\lb 1+t_{k,n}+2\sqrt{t_{k,n}}y_k \rb^{n_k-n_{k-1}}\pi_{t_{1,n},\dots,t_{d,n}}(\d y_1,\dots,\d y_{d}).
\end{split}
\ee


We have the following asymptotics of $Z_n=\Pi_n(1,\ldots,1)$:

\begin{lemma}
    Assume \eqref{eq:high density phase condition} and $A/C,ABCD\notin\{q^{-l}:l\in\NN\}$. Then 
    \be\label{eq:asymptotics of normalizing constant}
    Z_n\sim\frac{(1+A)^{2n}}{ A^n}\pp_0,
    \ee
    where we write $$\pp_0:=p_0^\aa\lb A,B,C,D\rb=\frac{(A^{-2},BC,BD,CD)_\infty}{(B/A,C/A,D/A,ABCD)_\infty}>0.$$
\end{lemma}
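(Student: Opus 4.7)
The key observation is that with all times equal to $1$, the multi-dimensional signed measure $\pi_{1,\ldots,1}$ collapses onto the diagonal. Indeed, our convention $P_{s,s}(x,\d y)=\delta_x(\d y)$ combined with \eqref{eq:definition of multi-time} gives
\begin{equation*}
\pi_{1,\ldots,1}(\d x_1,\ldots,\d x_n) \;=\; \pi_1(\d x_1)\,\delta_{x_1}(\d x_2)\cdots\delta_{x_{n-1}}(\d x_n),
\end{equation*}
provided $1\in I$. The latter is guaranteed by Proposition \ref{prop:open ASEP parameters in Omega} in the present setting: when $C<1$ no extra assumption is needed, while when $C\geq1$ the HD conditions $A>C$ plus $A/C\notin\{q^{-l}:l\in\NN\}$ yield $A/C\notin\{q^l:l\in\Z\}$. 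Substituting into \eqref{PIN} reduces $Z_n$ to the one-dimensional integral
\begin{equation*}
Z_n \;=\; \int_{\RR}\bigl(2(1+x)\bigr)^n\,\nu(\d x;A,B,C,D).
\end{equation*}

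The remainder of the argument is a Laplace-type asymptotic analysis. The support of $\nu(\d x;A,B,C,D)$ is $[-1,1]\cup F(A,B,C,D)$. Since $|B|,|D|<1$, neither $B$ nor $D$ generates atoms; the atoms come only from $A$ (always, as $A>1$) at $y_j^{\aa}=\tfrac12(Aq^j+(Aq^j)^{-1})$ for $j\in\NN$ with $Aq^j\geq1$, and (only if $C\geq1$) from $C$ at $y_k^{\cc}=\tfrac12(Cq^k+(Cq^k)^{-1})$. A direct computation gives $2(1+y_j^{\aa})=(1+Aq^j)^2/(Aq^j)$ and similarly for $y_k^{\cc}$. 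Since the map $u\mapsto(1+u)^2/u=u+2+1/u$ is strictly increasing on $[1,\infty)$ and $A>C$, the unique maximizer of $x\mapsto 2(1+x)$ over all atoms is $y_0^{\aa}=\tfrac12(A+A^{-1})$ with value $\lambda:=(1+A)^2/A$. On $[-1,1]$ the integrand is bounded by $4$, and $4<\lambda$ since $(A-1)^2>0$.

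It remains to extract the leading term. The atom $y_0^{\aa}$ carries mass $\pp_0=p_0^{\aa}(A,B,C,D)$ as defined in \eqref{eq: p_0}, so it contributes $\pp_0\lambda^n$ to $Z_n$. Each of the finitely many other atoms has bounded mass (uniformly bounded by the total variation of $\nu$, which is finite by Remark \ref{rmk:total variation uniformly bounded compact}) and evaluates the integrand to some value $\leq\mu$ with $\mu:=\max\{(1+Aq)^2/(Aq),(1+C)^2/C\}<\lambda$; the continuous part is bounded in absolute value by $4^n\int_{-1}^1|f(x;A,B,C,D)|\,\d x=O(4^n)$, again finite by the total variation bound. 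Hence $Z_n=\pp_0\lambda^n+O(\max\{\mu,4\}^n)=\pp_0\lambda^n(1+o(1))$, which is the desired asymptotics. Positivity $\pp_0>0$ is a direct inspection of the $q$-Pochhammer factors in \eqref{eq: p_0}: each numerator and denominator factor whose argument is either nonpositive (so all $1-\cdot q^j\geq1$) or has modulus strictly less than one (so each $1-\cdot q^j\in(0,1]$) is positive, leaving only $(ABCD)_\infty$ to be handled by a short case analysis according to whether $ABCD<1$ or $ABCD>1$. The main subtlety is ensuring the remainder really is negligible uniformly, but this is exactly the role of the finite-total-variation bound; the core mechanism is the diagonal collapse in the first display.
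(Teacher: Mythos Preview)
Your proof is correct and follows essentially the same approach as the paper's: both reduce $Z_n$ to the one-dimensional integral $\int_{\RR}(2(1+x))^n\,\pi_1(\d x)$, identify the dominant contribution from the atom $y_0^{\aa}(1)=\tfrac12(A+A^{-1})$ with mass $\pp_0$, and bound the remainder using that all other points in the support give a strictly smaller base and the signed measure has finite total variation. The paper's version is somewhat terser, packaging the second-largest point as $y_1^*(1)$ rather than writing out $\mu$ explicitly, and it does not spell out the diagonal collapse $P_{1,1}(x,\d y)=\delta_x(\d y)$ or the verification that $1\in I$, both of which you make explicit; your definition of $\mu$ should technically restrict to atoms that actually exist (e.g.\ drop the $Aq$ term if $Aq<1$), but this does not affect the argument.
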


This result has been known before in the literature, for example in \cite{uchiyama2004asymmetric,bryc2019limit}. See Remark \ref{rmk:partition function}. 
We provide a proof for completeness.
\begin{proof} 
We first introduce some general notations. We denote $y_0(t):=y_0^{\aa}(t)=\frac{1}{2}\lb A\sqrt{t}+\frac{1}{A\sqrt{t}}\rb$. Since $A>C$ and $A>1$, after possibly shrinking the open neighborhood $I$ of $1$, we have $A\sqrt{t}>\max\lb1,\frac{C}{\sqrt{t}}\rb$ for $t\in I$. Therefore for all $t\in I$, $y_0(t)$ is the largest atom of $U_t$.
Denote
$$y_1^*(t):=\max\lb1,\frac{1}{2}\lb Aq\sqrt{t}+
\frac{1}{Aq\sqrt{t}}\rb \one_{Aq\sqrt{t}\geq1}, \frac{1}{2}\lb 
\frac{C}{\sqrt{t}}+\frac{\sqrt{t}}{C} \rb\one_{C/\sqrt{t}\geq1}\rb.$$
then for all $t\in I$ we have $y_1^*(t)<y_0(t)$, and that $\pi_t$ is supported on $U_t\subset \{ y_0(t)\}\cup[-1,y_1^*(t)]$.

    By Theorem \ref{thm: stationary measure in terms of Askey--Wilson integral}, we can write
    $Z_n=2^n\int_{\RR}(1+y)^n\pi_1(\d y)$. Note that the signed measure $\pi_1(\d y)$ is supported on $U_{1}\subset\{y_0(1)\}\cup[-1,y_1^*(1)]$, and the largest atom $y_0(1)$ has mass $\pp_0$. One can write:
    $$\frac{Z_n}{2^n}=\int_{\{y_0(1)\}}(1+y)^n\pi_1(\d y)+\int_{-1}^{y_1^*(1)}(1+y)^n\pi_1(\d y)$$
    The first term equals $(1+y_0(1))^n\pp_0=\frac{(1+A)^{2n}}{2^nA^n}\pp_0$. The second term is bounded above by $(1+y_1^*(1))^n |\pi_1| $, which converges to $0$ after divided by $(1+y_0(1))^n$, since $y_0(1)>y_1^*(1)$.
\end{proof}

Equation \eqref{eq:suffices to prove high density} (and hence the result under $A/C,ABCD\notin\{q^{-l}:l\in\NN\}$) now follows from the following:
\begin{lemma}\label{lem:4.3}
    Assume \eqref{eq:high density phase condition} and $A/C,ABCD\notin\{q^{-l}:l\in\NN\}$. 
     Define  
    $$M_n:=\int_{\RR^{d}}\prod_{k=1}^{d}\phi\lb s_{k,n},y_k\rb^{n_k-n_{k-1}}\pi_{t_{1,n},\dots,t_{d,n}}(\d y_1,\dots,\d y_{d}),$$
    where $$\phi(s,y):=\frac{1+e^{-s}+2e^{-s/2}y}{e^{-sA/(1+A)}}\frac{A}{(1+A)^2}.$$
    Assume the conditions of Lemma \ref{lem:corA}, i.e. $s_{k}\neq 2s_{k+1}$ for $1\leq k\leq d-1$. Then we have
    $$\lim_{n\rightarrow\infty}M_n = \pp_0 \exp\lb\frac{A}{2(1+A)^2}\,\sum_{k=1}^d\,s_k^2 
(x_k-x_{k-1})\rb.$$
\end{lemma}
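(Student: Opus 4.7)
The key input is Lemma~\ref{lem:property on support}: once a coordinate of the signed measure sits at the largest atom $y_0^{\aa}(s)$, the subsequent transition $P_{s,t}$ collapses to the Dirac mass at $y_0^{\aa}(t)$. Since $\pi_{t_{1,n}}$ carries an explicit atom at $y_0^{\aa}(t_{1,n})$ of mass $p_{0,n}^{\aa} := p_0^{\aa}(A\sqrt{t_{1,n}},B\sqrt{t_{1,n}},C/\sqrt{t_{1,n}},D/\sqrt{t_{1,n}})$, the plan is to split
\[
\pi_{t_{1,n}} = p_{0,n}^{\aa}\,\delta_{y_0^{\aa}(t_{1,n})} + \widetilde\pi_{t_{1,n}},
\]
with $\widetilde\pi_{t_{1,n}}$ supported in $[-1, y_1^*(t_{1,n})]$, and iterate the absorbing identity on the first summand. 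This yields $M_n = M_n^{(\mathrm{A})} + M_n^{(\mathrm{B})}$ with the distinguished "atom-path" contribution
\[
M_n^{(\mathrm{A})} = p_{0,n}^{\aa}\prod_{k=1}^d \phi\bigl(s_{k,n}, y_0^{\aa}(t_{k,n})\bigr)^{n_k-n_{k-1}}.
\]

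To evaluate the limit of $M_n^{(\mathrm{A})}$ I would first verify the algebraic identity $1+t+2\sqrt{t}\,y_0^{\aa}(t) = (1+A)(t + 1/A)$, which reduces $\phi(s, y_0^{\aa}(e^{-s}))$ to $(1+Ae^{-s})/[(1+A)e^{-sA/(1+A)}]$, and then Taylor expand around $s=0$ to obtain $\log \phi(s, y_0^{\aa}(e^{-s})) = \tfrac{A}{2(1+A)^2}s^2 + O(s^3)$. Summing over $k$ with $s_{k,n}^2 (n_k - n_{k-1}) = s_k^2 (x_k - x_{k-1}) + O(n^{-1/2})$ together with $p_{0,n}^{\aa} \to \pp_0$ by continuity (Remark~\ref{rmk:atom weights are continuous functions}) produces the claimed right-hand side.

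For $M_n^{(\mathrm{B})}$ the strategy is to show that the integrand decays exponentially in $n$ while the total variation of the signed measure stays bounded. Explicitly, $\phi(0, y) = 2A(1+y)/(1+A)^2$ is strictly increasing with $\phi(0, y_0^{\aa}(1)) = 1$, and since $y_1^*(1) < y_0^{\aa}(1)$, a compactness and continuity argument yields $\rho \in (0,1)$ and $N_0$ such that for all $n \geq N_0$ and all $k$,
\[
|\phi(s_{k,n}, y)| \leq \rho \qquad \text{for every } y \in U_{t_{k,n}} \setminus \{y_0^{\aa}(t_{k,n})\},
\]
while $|\phi(s_{k,n}, y_0^{\aa}(t_{k,n}))|^{n_k-n_{k-1}}$ is bounded by the convergent quantity from $M_n^{(\mathrm{A})}$. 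In $M_n^{(\mathrm{B})}$ the variable $y_1$ is restricted to $\mathrm{supp}(\widetilde\pi_{t_{1,n}})$, producing the factor $\rho^{n_1}$ which decays exponentially; combined with a uniform bound on the total variation of $\pi_{t_{1,n},\dots,t_{d,n}}$ taken from Appendix~\ref{sec:total variation bounds}, this forces $M_n^{(\mathrm{B})} \to 0$.

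I anticipate the delicate step is the uniform total variation estimate: the signed kernels $P_{s,t}$ may carry substantial negative mass as $t \searrow s$, so controlling their iterated product uniformly in $n$ is nontrivial, and this is precisely where the technical hypothesis $s_k \neq 2s_{k+1}$ enters via the appendix bound. The rest is bookkeeping: algebraic simplification at the atom, a standard Taylor expansion, and a uniform comparison of $\phi$ with $1$ on the complement of the atom path.
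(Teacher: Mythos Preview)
Your approach is essentially identical to the paper's: the same decomposition into the atom-path contribution $M_n^{(\mathrm A)}$ (the paper's $M_n^1$) and remainder $M_n^{(\mathrm B)}$ (the paper's $M_n^2$), the same Taylor expansion of $\phi(s,y_0^{\aa}(e^{-s}))$, and the same use of Lemma~\ref{lem:corA} to control the remainder. One small correction: the total variation bound from Appendix~\ref{sec:total variation bounds} is not uniform in $n$ but polynomial (of order $Kn^{d}$ here), which is harmless since the exponential factor $\rho^{n_1}$ with $n_1\sim nx_1$ still dominates.
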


The proof of this lemma will require the following total variation bound of Askey--Wilson signed measures:
\begin{lemma}\label{lem:corA}
    Assume $A,C\geq0$, $B,D\in(-1,0]$ and $ABCD\notin\{q^{-l}:l\in\NN\}$.
    Assume $s_1>\dots>s_d>0$ satisfying $s_k\neq 2s_{k+1}$ for $1\leq k\leq d-1$. Assume that $\theta>0$. Denote  $t_{k,n}:=e^{-s_k/\nal}$ for $1\leq k\leq d$. Then there exists $N$ and $K$ depending only on $A,B,C,D$ and $s_1,\dots,s_d$, such that for any $n\geq N$, the total variation of
    $\pi_{t_{1,n},\dots,t_{d,n}}(\d y_1,\dots,\d y_d)$
    is bounded above by $Kn^{2\al d}$. 
\end{lemma}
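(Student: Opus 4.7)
The plan is to bound $|\pi_{t_{1,n},\ldots,t_{d,n}}|(\RR^d)$ by decomposing along the product structure in \eqref{eq:definition of multi-time}. First I would apply the standard total variation estimate for a signed kernel product,
\[|\pi_{t_{1,n},\ldots,t_{d,n}}|(\RR^d)\;\leq\;|\pi_{t_{1,n}}|(\RR)\,\prod_{k=1}^{d-1}\sup_{x\in U_{t_{k,n}}}|P_{t_{k,n},t_{k+1,n}}(x,\cdot)|(\RR),\]
reducing the problem to estimating $d$ factors. If I can show that the marginal factor is $O(1)$ and each transition supremum is $O(n^{2\al})$, multiplying yields the target bound $K n^{2\al d}$. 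The threshold $N$ is chosen large enough so that all $t_{k,n}$ lie in $I(A,B,C,D)$ and the asymptotic estimates below are valid uniformly.

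The marginal bound is immediate: since $t_{1,n}\to 1$, the parameters $(A\sqrt{t_{1,n}},B\sqrt{t_{1,n}},C/\sqrt{t_{1,n}},D/\sqrt{t_{1,n}})$ stay in a compact subset of $\Omega$, and Remark~\ref{rmk:total variation uniformly bounded compact} gives a uniform upper bound. The transition bound is more delicate, and I would handle it by splitting the supremum according to where $x\in U_{t_{k,n}}$ lies. When $x=\cos\theta_x\in[-1,1]$, the four parameters of $P_{t_{k,n},t_{k+1,n}}(x,\cdot)$ again lie in a compact subset of $\Omega$ (the two $x$-dependent parameters have modulus $\sqrt{t_{k,n}/t_{k+1,n}}<1$), yielding a uniform $O(1)$ bound. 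When $x=y_0^{\aa}(t_{k,n})$ (and, by the analogue with $A$ and $C$ swapped, when $C>A$ and $x=y_0^{\cc}(t_{k,n})$), Lemma~\ref{lem:property on support} forces the kernel to be a delta and the total variation is $1$.

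The remaining cases, where $x$ is a non-maximal atom $y_j^{\eee}(t_{k,n})$ or, on the coexistence line $A=C$, the secondary atom, are where the $n^{2\al}$ growth comes from. Here I would use the explicit formulas in Definition~\ref{def:definition of Askey--Wilson measure}: both the atomic masses $p_j^{\eee}$ and the continuous density $f$ contain $q$-Pochhammer factors $(x/y;q)_\infty$ and $(xy;q)_\infty$ in the denominators, and for certain pairs the argument approaches $1$ at rate $1/n^{\al}$. Specifically, $(cd;q)_\infty=(t_{k,n}/t_{k+1,n};q)_\infty$ vanishes like $(s_k-s_{k+1})/n^{\al}$, and on the coexistence line an additional factor like $(a/c;q)_\infty$ vanishes at the same rate. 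Each such vanishing denominator contributes a factor of $n^{\al}$ to the worst atomic mass; two independent such factors yield the $n^{2\al}$ bound per transition.

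The main obstacle is to execute this case-by-case asymptotic analysis rigorously while tracking cancellations: for example, $(ad;q)_\infty=0$ when $ad=q^{-j}$ kills the continuous density entirely, so one need only bound finitely many atomic contributions in those degenerate cases. One must also verify that no further coincidences can create more than two vanishing Pochhammer factors simultaneously. The hypothesis $s_k\neq 2s_{k+1}$ is precisely what excludes the extra coincidence $t_{k,n}=(t_{k+1,n})^2$, which would cause an additional parameter product (such as $a^2t_{k,n}/t_{k+1,n}$ appearing in $p_j^{\aa}$ via $(ac;q)_\infty$) to approach a value in $\{q^{-l}\}$ and force a third factor of $n^{\al}$. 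Multiplying the $O(1)$ marginal bound with the $d-1$ transition supremums of order $n^{2\al}$ then completes the proof.
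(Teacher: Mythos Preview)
Your product decomposition is exactly the paper's, but two of the shortcuts you take are not valid.

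First, the compactness appeals fail. For the marginal, when $A,C\geq 1$ with $A/C\in\{q^l:l\in\mathbb Z\}$ (in particular on the coexistence line $A=C>1$, which the lemma is meant to cover), the quadruple $(A\sqrt{t_{1,n}},B\sqrt{t_{1,n}},C/\sqrt{t_{1,n}},D/\sqrt{t_{1,n}})$ converges to $(A,B,C,D)$, which violates condition~\textbf{(3)} in Definition~\ref{def:region omega} and hence lies outside $\Omega$; Remark~\ref{rmk:total variation uniformly bounded compact} is then inapplicable, and indeed on the coexistence line the masses $p_0^{\aa},p_0^{\cc}$ of $\pi_{t_{1,n}}$ blow up like $n^{\al}$ because of the factor $(1/t_{1,n})_\infty$ (resp.\ $(t_{1,n})_\infty$) in their denominators. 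The same issue arises in your transition case $x\in[-1,1]$: here $cd=t_{k,n}/t_{k+1,n}\to 1$, so the quadruple approaches the boundary $cd=1$ of $\Omega$ and does not stay in any compact subset.

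Second, there is no ``analogue with $A$ and $C$ swapped'' of Lemma~\ref{lem:property on support}. The kernel in \eqref{eq:transition AW signed} carries only $A,B$ in its first two slots; $C,D$ enter only through the choice of $x\in U_s$. For $x=y_0^{\cc}(s)$ one obtains $P_{s,t}(y_0^{\cc}(s),\d y)=\nu(\d y;A\sqrt{t},B\sqrt{t},C/\sqrt{t},s/(C\sqrt{t}))$, a genuine signed Askey--Wilson measure with atoms and a continuous part, not a Dirac mass.

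The paper instead bounds the atom masses of \emph{both} $\pi_{t_n}$ and $P_{t_n,t_n'}(x,\cdot)$ directly, verifying case by case that every relevant ratio of parameters has the form $|\f/\e|=re^{\ep/\nal}$ with $\ep\neq 0$. This forces at most two linear factors in the denominators of \eqref{eq: p_0}--\eqref{eq: p_j} to be of order $1/\nal$, so each atom mass is $O(n^{2\al})$; taking the product of $d$ such bounds (one marginal, $d-1$ transitions) gives $Kn^{2\al d}$. The hypothesis $s_k\neq 2s_{k+1}$ is needed precisely in the case $x\in[-1,1]$, where $|c/a|=(1/A)e^{(s_{k+1}-s_k/2)/\nal}$ and one must have a nonzero exponent.
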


The above result is a direct corollary of Proposition \ref{prop:total variation} in Appendix \ref{sec:total variation bounds}.

\begin{proof}[Proof of Lemma \ref{lem:4.3}]
We first study the support of the signed measure $\pi_{t_{1,n},\dots,t_{d,n}}$.
By Lemma \ref{lem:property on support}, it is supported  in 
   $$\left\{\lb y_0\lb t_{1,n}\rb,\dots,y_0\lb t_{d,n}\rb\rb\right\}\cup\le-1,y_1^*\lb t_{1,n}\rb\re\times\le-1,y_0\lb t_{2,n}\rb\re\times\dots\times\le-1,y_0\lb t_{d,n}\rb\re.$$
Since this is the union of two disjoint sets, one can split the integral $M_n$ into two parts 
$M_n=M_{n}^1+M_{n}^2$, where 
   \begin{equation*}
       \begin{split}
           M_{n}^1&=\int_{\{\lb y_0(t_{1,n}),\dots,y_0(t_{d,n})\rb\}}\prod_{k=1}^{d}\phi\lb s_{k,n},y_k\rb^{n_k-n_{k-1}}\pi_{t_{1,n},\dots,t_{d,n}}(\d y_1,\dots,\d y_{d})\\
    &=p_0^\aa (t_{1,n})\prod_{k=1}^{d}\phi\lb s_{k,n},y_0(t_{k,n})\rb^{n_k-n_{k-1}},
       \end{split}
   \end{equation*} 
here we used the fact that the mass  of the atom $\lb y_0\lb t_{1,n}\rb,\dots,y_0\lb t_{d,n}\rb\rb\in\RR^d$ is $$p_0^\aa (t_{1,n}) =p_0^\aa(A\sqrt{t_{1,n}},B\sqrt{t_{1,n}},C/\sqrt{t_{1,n}},D/\sqrt{t_{1,n}}),$$ and
\begin{equation*}
        M_{n}^2=\int_{-1}^{y_1^*(t_{1,n})}\int_{-1}^{y_0(t_{2,n})}\dots\int_{-1}^{y_0(t_{d,n})} \prod_{k=1}^{d}\phi\lb s_{k,n},y_k\rb^{n_k-n_{k-1}}\pi_{t_{1,n},\dots,t_{d,n}}(\d y_1,\dots,\d y_{d}).
\end{equation*}
As we will see below, the dominating term will come from $M_{n}^1$.

 We first compute the asymptotics of $M_{n}^1$. As $n\rightarrow\infty$, $p_0^\aa (t_{1,n})\rightarrow\pp_0>0$. As $s\rightarrow0$,
$$\phi(s,y_0(e^{-s})) = \frac 1{1+A}e^{sA/(1+A)}+\frac A{1+A}e^{-s/(1+A)} = 1+\frac{As^2}{2(1+A)^2}+o(s^2).$$
Hence, as $n\rightarrow\infty$,
$$M_{n}^1=p_0^\aa (t_{1,n})\prod_{k=1}^{d}\phi\lb s_{k,n},y_0(t_{k,n})\rb^{n_k-n_{k-1}}\rightarrow\pp_0 \exp\lb\frac{A}{2(1+A)^2}\,\sum_{k=1}^d\,s_k^2 
(x_k-x_{k-1})\rb.$$

By Lemma \ref{lem:corA}, the total variation of $\pi_{t_{1,n},\dots,t_{d,n}}$ can be bounded by $Kn^d$, hence $M_{n}^2$ is bounded by
$$M_{n}^2 \leq Kn^d 
        \phi\lb s_{1,n},y_1^*(t_{1,n})\rb^{n_1}
        \prod_{k=2}^{d}\phi\lb s_{k,n},y_0(t_{k,n})\rb^{n_k-n_{k-1}}.$$

As $n\rightarrow\infty$, $y_1^*(t_{1,n})\rightarrow y_1^*(1)$ and $y_0(t_{1,n})\rightarrow y_0(1)$. Note that $1\leq y_1^*(1)<y_0(1)$ and $\phi(s,y)$ is strictly increasing in $y>0$, hence 
$$
\frac{M_{n}^2}{M_{n}^1}\leq\frac{Kn^d}{p_0^\aa (t_{1,n})}\lb\frac{\phi(s_{1,n},y_1^*(t_{1,n}))}{\phi(s_{1,n},y_0(t_{1,n}))}\rb^{n_1}\rightarrow0
$$
Thus the proof is concluded.
\end{proof}

\subsubsection{Stochastic sandwiching and extension}
We have already proved Theorem \ref{thm:fluctuations} in the generic sub-region $A/C,ABCD\notin\{q^{-l}:l\in\NN\}$ of high density phase \eqref{eq:high density phase condition}. We now adopt the `stochastic sandwiching' argument from \cite{corwin2021stationary} to extend equation \eqref{eq:suffices to prove high density} (and hence the result) to the whole high density phase.
We begin by recording a special case of \cite[Lemma 4.1]{corwin2021stationary}:
\begin{lemma}\label{lem:coupling lemma}
    Fix $q\in[0,1)$, and consider non-negative real numbers
    $$\alpha^{(1)}\leq\alpha^{(2)},\quad\beta^{(1)}\geq\beta^{(2)},\quad\gamma^{(1)}\geq\gamma^{(2)},\quad\delta^{(1)}\leq\delta^{(2)}.$$
    Denote by $\tau^{(j)}_i$ the occupation variable of the $i$-th site under the stationary measure $\mu_n^{(j)}$ of the $n$-site open ASEP with rates $(q,\alpha^{(j)},\beta^{(j)},\gamma^{(j)},\delta^{(j)})$, for $i\in\{1,\ldots,n\}$ and $j\in\{1,2\}$. Then there exists a coupling $\mu_n^{(1,2)}$ of the  two stationary measures $\mu_n^{(1)}$ and $\mu_n^{(2)}$  such that $\mu_n^{(1,2)}$ almost surely 
    $\tau^{(1)}_i\leq\tau^{(2)}_i$ for any $i\in\{1,\ldots,n\}$.
\end{lemma}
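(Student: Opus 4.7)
The plan is to construct an explicit basic (Harris) coupling of the two open ASEPs using common Poisson clocks wherever possible, verify that this coupling preserves the coordinate-wise partial order on $\{0,1\}^n$, and then take the large-time limit to descend to a coupling of the two stationary measures.

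For the construction, I would use a single family of Poisson clocks on each bulk bond $(i,i+1)$, $i=1,\ldots,n-1$: rate $1$ clocks for right-jump attempts and rate $q$ clocks for left-jump attempts, each attempt being executed in both processes subject to the exclusion constraint. Since the bulk rates $1$ and $q$ are identical in both processes, no thinning is needed in the interior. At the boundary I would use the stronger rate together with a Bernoulli thinning. For site $1$, entry clocks ring at rate $\alpha^{(2)}$; every ring tries to insert in process $2$ (if site $1$ is empty), and with probability $\alpha^{(1)}/\alpha^{(2)}$ the same ring also attempts insertion in process $1$. Exit clocks at site $1$ ring at rate $\gamma^{(1)}$; every ring removes a particle in process $1$ (if site $1$ is occupied), and with probability $\gamma^{(2)}/\gamma^{(1)}$ the same ring also removes in process $2$. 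Site $n$ is coupled analogously, the hypotheses $\delta^{(1)}\leq\delta^{(2)}$ and $\beta^{(1)}\geq\beta^{(2)}$ dictating the direction of thinning. Each marginal is easily checked to have the correct infinitesimal generator.

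The key step is attractivity: if $\tau^{(1)}_i\leq\tau^{(2)}_i$ for every $i$ at time $t^-$, the inequality persists through any single clock event. This reduces to a short case analysis. For a right-jump ring on bond $(i,i+1)$ starting from an ordered pair $(\tau^{(1)}_i,\tau^{(2)}_i,\tau^{(1)}_{i+1},\tau^{(2)}_{i+1})$, the only non-trivial case is $(0,1,0,0)$, which becomes $(0,0,0,1)$ after the jump in process $2$ and is still ordered; all other cases, including those blocked by exclusion, are immediate. The left-jump case is symmetric. At boundary events, the ``common'' rings preserve order (both endpoints move together), while the ``extra'' thinned events only shrink $\tau^{(1)}$ (exits in process $1$ at site $1$ or $n$) or only grow $\tau^{(2)}$ (entries in process $2$ at site $1$ or $n$), so they cannot violate the inequality either.

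Having established attractivity, I would initialize the coupled processes from the ordered pair $\tau^{(1)}_0\equiv 0$, $\tau^{(2)}_0\equiv 1$. Since open ASEP on the finite state space $\{0,1\}^n$ is an irreducible continuous-time Markov chain, each marginal distribution converges exponentially fast to its unique stationary measure $\mu_n^{(j)}$, while the joint law at time $t$ is supported on the compact set of order-preserving pairs. Passing to a weak limit (or, as the marginals converge to limits determining uniqueness in the ordered-pair coupling obtained by monotone convergence along the natural filtration, simply taking $t\to\infty$) produces a coupling $\mu_n^{(1,2)}$ with the prescribed marginals and almost surely satisfying $\tau^{(1)}_i\leq\tau^{(2)}_i$. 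The only delicate point is the bookkeeping in the boundary thinnings, ensuring that each of the four inequality hypotheses is exploited in the correct direction; once this is set up, no analytic difficulty remains.
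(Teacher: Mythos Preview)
Your argument is correct and is the standard basic-coupling proof of attractivity for open ASEP. The paper itself does not give a proof of this lemma; it simply records it as a special case of \cite[Lemma 4.1]{corwin2021stationary}, so there is nothing to compare beyond noting that your construction is exactly the one underlying that reference. One small imprecision worth fixing: in the bulk right-jump analysis the configuration $(\tau^{(1)}_i,\tau^{(2)}_i,\tau^{(1)}_{i+1},\tau^{(2)}_{i+1})=(1,1,0,1)$ is also non-trivial (process $1$ jumps while process $2$ is blocked), though the outcome $(0,1,1,1)$ remains ordered, so the conclusion is unaffected.
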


Fix $q\in[0,1)$. Suppose the set of parameters $(\alpha,\beta,\gamma,\delta)$ is in the high density phase (i.e. the corresponding $A,B,C,D$ satisfy $A>C$ and $A>1$). One can choose 
a sequence $(\alpha^{(j)},\beta^{(j)},\gamma^{(j)},\delta^{(j)})$, $j\in\mathbb{N}_+$ satisfying:
\begin{itemize}
    \item [(i)] $\lim_{j\rightarrow\infty}(\alpha^{(j)},\beta^{(j)},\gamma^{(j)},\delta^{(j)})=(\alpha,\beta,\gamma,\delta)$.
    \item [(ii)] $\alpha\leq\alpha^{(j)}$, $\beta\geq\beta^{(j)}$, $\gamma\leq\gamma^{(j)}$, $\delta\geq\delta^{(j)}$ for $j\in\mathbb{N}_+$.
    \item [(iii)] For $j\in \mathbb{N}_+$ the corresponding $(A^{(j)},B^{(j)},C^{(j)},D^{(j)})$ (under map \eqref{eq:defining ABCD}) belong to the high density phase \eqref{eq:high density phase condition} and satisfy  $A^{(j)}/C^{(j)},A^{(j)}B^{(j)}C^{(j)}D^{(j)}\notin\{q^{-l}:l\in\NN\}$. 
\end{itemize}
Such a sequence exists since \eqref{eq:defining ABCD} is a bijection between \eqref{eq:conditions open ASEP} and \eqref{eq:conditions qABCD}, and that 
$A^{(j)}B^{(j)}C^{(j)}D^{(j)}=\gamma^{(j)}\delta^{(j)}/\alpha^{(j)}\beta^{(j)}$.
Consider the $n$-site open ASEP with parameters  $(q,\alpha^{(j)},\beta^{(j)},\gamma^{(j)},\delta^{(j)})$, with occupation variables $\tau_i^{(j)}$ for $1\leq i\leq n$, and height function $h_n^{\HH,(j)}(x)$  for $x\in[0,1]$ (defined analogously by \eqref{eq:centered height functions}).
By Lemma \ref{lem:coupling lemma}, there exists a coupling satisfying $\tau_i\leq\tau_i^{(j)}$
for $1\leq i\leq n$, hence $h_n^{\HH}(x)\leq h_n^{\HH,(j)}(x)$ for $x\in[0,1]$. Therefore, for fixed $\xx\in\RR^d$ and $\c\in\RR^d_+$, one has
$\ph_{\xx,n}^{\HH}(\c)\geq \ph_{\xx,n}^{\HH,(j)}(\c)$.
Using \eqref{eq:suffices to prove high density} for the open ASEP with $(q,\alpha^{(j)},\beta^{(j)},\gamma^{(j)},\delta^{(j)})$, we get
$$\lim_{n\rightarrow\infty}\ph_{\xx,n}^{\HH}\lb\frac{\c}{\sqrt{n}}\rb
\geq
\lim_{n\rightarrow\infty} \ph_{\xx,n}^{\HH,(j)}\lb\frac{\c}{\sqrt{n}}\rb
=\exp\lb\frac{A^{(j)}}{2(1+A^{(j)})^2}\,\sum_{k=1}^{d}\,s_k^2(x_k-x_{k-1})\rb,$$
for  $\cc$ from the open subset of $\RR_+^d$ specified by Lemma \ref{lem:corA}, i.e. $s_{k}\neq 2s_{k+1}$ for $1\leq k\leq d-1$, where we recall that $s_k = c_k+\cdots+c_d$ for $k=1,\dots,d$.
Taking $j\rightarrow\infty$, we get \eqref{eq:suffices to prove high density} for the open ASEP with $(q,\alpha,\beta,\gamma,\delta)$ with $\geq$ instead of $=$. Using exactly the same arguments one can get the reversed inequality. We conclude that \eqref{eq:suffices to prove high density} holds  for $\mathbf{c}$ from the same open subset of $\mathbb R_+^d$. Thus the proof is concluded.
 
\subsection{Proof on the low density phase}
The result for the low density phase follows immediately from result for the
high density phase, by exactly the same argument as \cite[Section 3.2]{bryc2019limit}
using the particle-hole duality, which we also explain here for completeness. 
Consider the $n$-site open ASEP with parameters $(q,\alpha,\beta,\gamma,\delta)$. 
Instead of thinking of particles jumping around, one can view the particles as background and consider  the holes as jumping around. 
In this way, equivalently a hole jumps to the left and right sites with rates $1$ and $q$, and removed at site $1$ with rate $\alpha$ and at site $n$ with rate $\delta$, and enters site $n$ with rate $\beta$ and site $1$ with rate $\gamma$, any move is prohibited if the target site is already occupied (i.e. is a hole). 
This is exactly the $n$-site open ASEP with parameters $(q,\beta,\alpha,\delta,\gamma)$  
if we relabel the sites $\{1,\dots,n\}$ by $\{n, \dots, 1\}$.

Let $\mu_n^{(A,B,C,D)}$ denote the stationary measure of $n$-site open ASEP with parameters $(q,\alpha,\beta,\gamma,\delta)$. Let $\tau_1,\dots,\tau_n$ be occupation variables, and set $\ep_i:=1-\tau_{n-i+1}$. Denote
$$\widehat{h}_n^{\LL}(x):=\sum_{i=1}^{\floor{nx}}\lb\ep_i-\frac{C}{1+C}\rb.$$
The above particle-hole duality shows that $\{\widehat{h}_n^{\LL}(x)\}_{x\in[0,1]}$ under $\mu_n^{(A,B,C,D)}$ has the same law as $\{h_n^{\HH}(x)\}_{x\in[0,1]}$ under $\mu_n^{(C,D,A,B)}$. Therefore, Theorem \ref{thm:fluctuations} in the high density phase shows
$$\frac{1}{\sqrt{n}} \{ \widehat{h}^{\LL}_n(x)\}_{x\in[0,1]}\stackrel{\mathrm{f.d.d.}}{\Longrightarrow}   \frac{\sqrt{C}}{1+C}\{\B(x)\}_{x\in[0,1]},$$
from which one can easily show, see page 1280 in \cite{bryc2019limit}, that
$$\frac{1}{\sqrt{n}} \{ h^{\LL}_n(x)\}_{x\in[0,1]}\stackrel{\mathrm{f.d.d.}}{\Longrightarrow}   \frac{\sqrt{C}}{1+C}\{\B(x)\}_{x\in[0,1]},$$
which concludes the proof.

\section{Density profile on the coexistence line: Proof of Theorem \ref{thm:density profile coexistence line}}
\label{sec:coexistence line}
In this section we prove Theorem \ref{thm:density profile coexistence line}. We will always assume:
\be\label{eq:coexistence line condition}A,C\geq0,\quad B,D\in(-1,0],\quad A=C>1,\quad ABCD\notin\{q^{-l}:l\in\NN\}.\ee

We first compute the Laplace transform of the process $\co(x)$ given by \eqref{eq:coexistence line process}:
$$\co(x):=\frac{Ax+(1-A) \lb x\wedge U\rb }{1+A},$$
where $U\sim U(0,1)$.
For $x_0=0<x_1<\dots<x_d=1$ and  variables
 $c_1,\dots,c_d>0$, we denote $s_k = c_k+\cdots+c_d$ for $k=1,\dots,d$. For convenience we also denote $s_{d+1}=0$. We have:
\begin{equation*}
    \begin{split}
        \mathbb{E}\le\exp\lb-\sum_{k=1}^d{c_k}\co(x_k)\rb\re&=
        \sum_{l=1}^d\mathbb{E}\le\exp\lb-\sum_{k=1}^d{c_k}\co(x_k)\rb\one_{U\in[x_{l-1},x_l]}\re\\
        &=\sum_{l=1}^d\mathbb{E}\le\exp\lb-\sum_{k=1}^{l-1}c_k\frac{x_k}{1+A}-\sum_{k=l}^dc_k \frac{Ax_k+(1-A)U}{1+A}  \rb\one_{U\in[x_{l-1},x_l]}\re\\
         & =\sum_{l=1}^d\,e^{-\frac{1}{A+1}\lb\sum_{k=1}^{l-1}\,c_kx_k+A\sum_{k=l}^d\,c_kx_k\rb}\,\mathbb E\,\le e^{\frac{A-1}{A+1}s_lU} \one_{U\in[x_{l-1},x_l]}\re \\ 
        &=\frac{A+1}{A-1}\sum_{l=1}^d\,\frac{1}{s_l}\,e^{-\frac{1}{A+1}\,\left(\sum_{k=1}^{l-1}\,c_kx_k+A\,\sum_{k=l}^d\,c_kx_k\right)}\,\left(e^{\frac{A-1}{A+1}\,s_l x_l}-e^{\frac{A-1}{A+1}\,s_l x_{l-1}}\right).
    \end{split}
\end{equation*}

We consider the Laplace transform of height function $h_n(x)$ with argument $\c\in\RR_+^d$, denoting $n_k:=\floor{nx_k}$ for $k=0,\dots,d$ (note that $n_0=0$ and $n_d=n$): 
\be \label{eq:definition of Laplace transform of height function coexistence}
\begin{split}
    \Psi_{\xx,n}^{\CL}(\c):&=\mathbb{E}_{\mu_n}\le\exp\lb-\sum_{k=1}^dc_kh_n(x_k)\rb\re 
     = \mathbb{E}_{\mu_n}\le\prod_{k=1}^{d}\prod_{i=n_{k-1}+1}^{n_k}\lb e^{-s_k}\rb^{\tau_i}\re\\
    &=\frac{1}{Z_n}
\Pi_n\lb\underbrace{e^{-s_1},\dots,e^{-s_1}}_{n_1},\underbrace{e^{-s_2},\dots,e^{-s_2}}_{n_2-n_1},\dots,\underbrace{e^{-s_{d}},\dots,e^{-s_{d}}}_{n_{d}-n_{d-1}}\rb,
\end{split}
\ee 
 where  in the last step we used \eqref{eq:MPA open ASEP1} of Theorem \ref{thm: stationary measure in terms of Askey--Wilson integral} with $Z_n=\Pi_n(1,\dots,1)$. 

By Proposition \ref{thm:Theorem A.1 from bryc wang}, it suffices to prove that  for all $\c$ from an open subset of $\RR_+^d$,
\be\label{eq:suffices to prove coexistence}
\lim_{n\rightarrow\infty}\Psi_{\xx,n}^{\CL}\lb\frac{\c}{n}\rb
=\frac{A+1}{A-1}\sum_{l=1}^d\,\frac{1}{s_l}\,e^{-\frac{1}{A+1}\,\left(\sum_{k=1}^{l-1}\,c_kx_k+A\,\sum_{k=l}^d\,c_kx_k\right)}\,\left(e^{\frac{A-1}{A+1}\,s_l x_l}-e^{\frac{A-1}{A+1}\,s_l x_{l-1}}\right).
\ee

To prove \eqref{eq:suffices to prove coexistence}, we first express
$\Psi_{\xx,n}^{\CL}(\c/n)$ as an integral.
We choose the open time interval $I=(1-\ep,1)$ for some small $\ep>0$, from Proposition \ref{prop:open ASEP parameters in Omega}.
{\em Different} from the notations in Section  \ref{sec:limit fluctuation}, we now write $s_{k,n}:=s_k/n$ and $t_{k,n}=e^{-s_{k,n}}$ for $k=1,\dots,d$.
For sufficiently large $n$, we have $t_{1,n}\leq\dots\leq t_{d,n}$ in $I$. 
By \eqref{PIN} of  Theorem \ref{thm: stationary measure in terms of Askey--Wilson integral}, one can write:
\be \label{eq:integral expression for Pi coexistence}
\Psi_{\xx,n}^{\CL}\lb\frac{\c}{n}\rb=\frac{1}{Z_n}\int_{\RR^{d}}\prod_{k=1}^{d}\lb 1+t_{k,n}+2\sqrt{t_{k,n}}y_k \rb^{n_k-n_{k-1}}\pi_{t_{1,n},\dots,t_{d,n}}(\d y_1,\dots,\d y_{d}).
\ee

We now have the following asymptotics of $Z_n=\Pi_n(1,\dots,1)$:

\begin{lemma}\label{lem:asymptotics of Zn coexistence}
    Assume \eqref{eq:coexistence line condition}. Then 
    \be\label{eq:asymptotics of normalizing constant coexistence}
    Z_n\sim\fc_0\frac{A-1}{A+1}n\frac{(1+A)^{2n}}{A^n},
    \ee
    where we write $$\fc_0:= \frac{(A^{-2},AB,BD,AD)_\infty}{(B/A,q,D/A,A^2BD)_\infty}.$$
\end{lemma}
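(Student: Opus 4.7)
The plan rests on the polynomial property of $\Pi_n$: since $\Pi_n$ is a polynomial,
$$Z_n = \Pi_n(1,\dots,1) = \lim_{t \to 1^-} \Pi_n(t,\dots,t) = \lim_{t \to 1^-} \int_\RR (1+t+2\sqrt{t}\,y)^n \pi_t(\d y),$$
where the integral representation holds on the interval $I = (1-\ep,1)$ from Proposition \ref{prop:open ASEP parameters in Omega}. Note that $1 \notin I$, because $A/C = 1 = q^0$ on the coexistence line violates condition \textbf{(3)} of Definition \ref{def:region omega}, so $\pi_1$ is not directly defined. The task reduces to extracting the leading asymptotic of this limit as $n \to \infty$.

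For $t \in I$ slightly below $1$, the signed measure $\pi_t$ possesses two atoms whose positions merge as $t \to 1^-$: $y_0^{\aa}(t) = \tfrac{1}{2}(A\sqrt{t} + (A\sqrt{t})^{-1})$ generated by $a = A\sqrt{t}$, and $y_0^{\cc}(t) = \tfrac{1}{2}(A/\sqrt{t} + \sqrt{t}/A)$ generated by $c = A/\sqrt{t}$. At these atoms,
$$1 + t + 2\sqrt{t}\,y_0^{\aa}(t) = \tfrac{(A+1)(1+At)}{A}, \qquad 1 + t + 2\sqrt{t}\,y_0^{\cc}(t) = \tfrac{(A+1)(A+t)}{A},$$
both tending to $(A+1)^2/A$. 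The mass formulas of Definition \ref{def:definition of Askey--Wilson measure}, with $a = A\sqrt{t}, b = B\sqrt{t}, c = A/\sqrt{t}, d = D/\sqrt{t}$, contain in their denominators the factors $(c/a)_\infty = (1/t)_\infty$ for $p_0^{\aa}$ and $(a/c)_\infty = (t)_\infty$ for $p_0^{\cc}$, each of which vanishes like $(1-t)$ as $t \to 1^-$. Extracting these singular factors yields
$$p_0^{\aa}(t) = -\frac{g(t)}{1-t}, \qquad p_0^{\cc}(t) = \frac{f(t)}{1-t},$$
with $f, g$ continuous near $t = 1$ and $f(1) = g(1) = \fc_0$ (direct substitution into the mass formulas reproduces the claimed $\fc_0$).

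The combined contribution of these two atoms to the integral is therefore
$$\Xi_n(t) := \frac{(A+1)^n}{A^n(1-t)}\bigl[f(t)(A+t)^n - g(t)(1+At)^n\bigr],$$
whose numerator vanishes at $t = 1$. A single application of L'Hopital's rule (differentiating in $t$ and evaluating at $t = 1$) gives
$$\lim_{t \to 1^-} \Xi_n(t) = \frac{(A+1)^n}{A^n}\bigl[n(A-1)\fc_0(A+1)^{n-1} - (f'(1)-g'(1))(A+1)^n\bigr] = n\fc_0\,\frac{A-1}{A+1}\,\frac{(A+1)^{2n}}{A^n}\Bigl(1 + O\Bigl(\tfrac{1}{n}\Bigr)\Bigr),$$
exactly matching the target asymptotic.

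It remains to control the remaining contributions to the integral: the continuous part on $[-1,1]$ and the atom pairs $(y_j^{\aa}(t), y_j^{\cc}(t))$ for $j \geq 1$. The continuous part is bounded by $(1+\sqrt{t})^{2n} \leq 4^n$ times an integrable density, and $4 < (A+1)^2/A$ for $A > 1$. Each pair with $j \geq 1$ exhibits the same divergence-cancellation mechanism as the $j=0$ pair, producing in the limit a term of order $n \lambda_j^n$ with $\lambda_j = (1+Aq^j)(1+(Aq^j)^{-1}) < (A+1)^2/A$, since $x \mapsto (1+x)(1+x^{-1})$ is strictly increasing on $x \geq 1$. Both contributions are therefore of strictly smaller exponential order and hence negligible against $n(A+1)^{2n}/A^n$. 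I expect the main obstacle to be the simultaneous control of the pairwise cancellation for all $j \geq 1$ together with uniform bounds on the continuous density as $t \to 1^-$; a conceptually cleaner alternative is to recast $Z_n(t)$ via the contour-integral formulation of Theorem \ref{thm: contour integral orthogonality Theorem 2.3 AW}, in which the two merging simple poles at $z = A\sqrt{t}\,q^j$ and $z = Aq^j/\sqrt{t}$ fuse into a double pole at $z = Aq^j$ at $t = 1$, and the leading asymptotic emerges directly as the residue at the dominant double pole $z = A$, producing the factor of $n$ via differentiation of $((1+z)(1+z^{-1}))^n$.
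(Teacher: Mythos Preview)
Your proposal is correct and follows essentially the same route as the paper: take $Z_n=\lim_{t\to 1^-}\int_\RR(1+t+2\sqrt t\,y)^n\pi_t(\d y)$, isolate the pair of merging atoms $y_0^\aa(t),y_0^\cc(t)$ whose masses diverge with opposite signs like $\pm\fc_0/(1-t)$, apply L'H\^opital to extract the leading term $n\fc_0\frac{A-1}{A+1}\frac{(1+A)^{2n}}{A^n}$, and then bound the continuous part by $O(4^n)$ and the remaining atom pairs $j\ge 1$ by $O(n\lambda_j^n)$ with $\lambda_j<(1+A)^2/A$. The paper additionally treats the borderline case $A=q^{-m}$ (where the innermost atom $y_m^\aa(t)$ is absent and no cancellation is needed for $j=m$), which you should mention for completeness; your contour-integral remark about merging simple poles becoming a double pole is a nice alternative viewpoint not in the paper.
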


The proof of this lemma is a little technical and is deferred to the end of this section.
\begin{remark}\label{rmk:partition function}
    The quantity $\frac{Z_n}{(1-q)^n}=\ll W|(\E+\D)^n|V\rr$ is called the partition function in the physics literature. 
    The asymptotics of the partition function have been known on many parts of the phase diagram, see \cite[Remark 4.6]{bryc2019limit} for a survey. 
    Early results for some special parameters include \cite[(52),(53) and (55)]{derrida1993exact}  and \cite[(56)]{blythe2000exact}. For general parameters, \cite[(6.6) and (6.9)]{uchiyama2004asymmetric} obtained the asymptotics on $A>1$, $A>C$ and $A,C<1$.
    In the mathematical work \cite[Lemma 3.1 and Lemma 4.5]{bryc2019limit}, the asymptotics of partition function are obtained everywhere in the fan region $AC<1$. We do not find general result on the asymptotics of $Z_n$ on the coexistence line $A=C>1$ (i.e. Lemma \ref{lem:asymptotics of Zn coexistence}) in the literature.
\end{remark}

Equation \eqref{eq:suffices to prove coexistence} (and hence the result) now follows from the following:
\begin{lemma}\label{lem:5.3}
    Assume \eqref{eq:coexistence line condition}. 
    Define:
    $$H_n:=\frac{1}{n}\int_{\RR^{d}}\prod_{r=1}^{d}\psi\lb s_{r,n},y_r\rb^{n_r-n_{r-1}}\pi_{t_{1,n},\dots,t_{d,n}}(\d y_1,\dots,\d y_{d}),$$
    where $$\psi(s,y):=\lb 1+e^{-s}+2e^{-s/2}y\rb\frac{A}{(1+A)^2}.$$
    We recall that $s_k = c_k+\cdots+c_d$, $s_{k,n} =s_k/n$ and $t_{k,n}=e^{-s_{k,n}}$ for $k=1,\dots,d$.
    Assume the conditions of Lemma \ref{lem:corA}, i.e. $s_{k}\neq 2s_{k+1}$ for $1\leq k\leq d-1$.    Then we have
    \be\label{eq:limit of Hn suffices to prove}\lim_{n\rightarrow\infty}H_n = \fc_0 \sum_{l=1}^d\,\frac{1}{s_l}\,e^{-\frac{1}{A+1}\,\left(\sum_{k=1}^{l-1}\,c_kx_k+A\,\sum_{k=l}^d\,c_kx_k\right)}\,\left(e^{\frac{A-1}{A+1}\,s_l x_l}-e^{\frac{A-1}{A+1}\,s_l x_{l-1}}\right).\ee
\end{lemma}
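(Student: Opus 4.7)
The strategy mirrors that of Lemma~\ref{lem:4.3}, but two features of the coexistence line $A=C>1$ force substantial modifications. On the one hand, $\pi_t$ now has two competing top atoms $y_0^\aa(t)=\tfrac{1}{2}(A\sqrt{t}+(A\sqrt{t})^{-1})$ and $y_0^\cc(t)=\tfrac{1}{2}(A/\sqrt{t}+\sqrt{t}/A)$, whose positions coalesce as $t\to 1$. On the other hand, the individual masses of these atoms diverge linearly in $n$ with opposite signs, precisely compensated by the $1/n$ prefactor in $H_n$. I will (i) localize the integral to ``top-atom paths''; (ii) compute asymptotic masses via $q$-Pochhammer expansions; (iii) evaluate $\psi$ at the top atoms explicitly; and (iv) recombine into the target expression.

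\textbf{Localization.} Using the total variation bound $|\pi_{t_{1,n},\dots,t_{d,n}}|\leq Kn^{2d}$ from Lemma~\ref{lem:corA} (applied with $\theta=1$), I will show the integral concentrates on atoms $(y_1,\dots,y_d)$ with each $y_k\in\{y_0^\aa(t_{k,n}),y_0^\cc(t_{k,n})\}$. A direct computation gives $\psi(s,y_0^\aa(e^{-s}))=(1+Ae^{-s})/(1+A)$ and $\psi(s,y_0^\cc(e^{-s}))=(A+e^{-s})/(1+A)$, both tending to $1$ as $s\to 0$. At any other point of $U_t$ (a lower atom $y_j^\eee$ with $j\geq 1$, or a point of $[-1,1]$) one verifies $\psi(0,\cdot)<1$ strictly; the sharpest case is $\psi(0,1)=4A/(1+A)^2<1$, valid since $A\neq 1$. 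Hence $\psi^{n_k-n_{k-1}}$ decays geometrically off the top atoms, which overwhelms the polynomial variation bound and the $1/n$ prefactor. By Lemma~\ref{lem:property on support}, $P_{s,t}(y_0^\aa(s),\cdot)=\delta_{y_0^\aa(t)}$, so once the path enters the $\aa$-atom it stays there; the contributing paths are therefore indexed by a switch position $l\in\{1,\ldots,d+1\}$ via $\eee_k=\cc$ for $k<l$ and $\eee_k=\aa$ for $k\geq l$.

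\textbf{Asymptotic masses and $\psi$-products.} Using $(u;q)_\infty=(1-u)(qu;q)_\infty$ with the expansions $1-t_{k,n}\sim s_k/n$, $1-1/t_{k,n}\sim -s_k/n$, $1-t_{k-1,n}/t_{k,n}\sim c_{k-1}/n$ (where $c_{k-1}=s_{k-1}-s_k$), one extracts the singular factors. On the coexistence line $C=A$, these are precisely the Pochhammer symbols that degenerate in $p_0^\aa$ and $p_0^\cc$ for the marginal $\pi_{t_{1,n}}$ and for the transition measure $P_{t_{k-1,n},t_{k,n}}(y_0^\cc(t_{k-1,n}),\cdot)=\nu(\cdot;A\sqrt{t_{k,n}},B\sqrt{t_{k,n}},C/\sqrt{t_{k,n}},t_{k-1,n}/(\sqrt{t_{k,n}}C))$; all remaining factors evaluate (at $t_{k-1}=t_k=1$, $C=A$) to identical finite values in numerator and denominator and cancel pairwise, leaving
\[p_0^\aa(t_{1,n})\sim -\frac{\fc_0\,n}{s_1},\quad p_0^\cc(t_{1,n})\sim \frac{\fc_0\,n}{s_1},\quad P_k^{\cc\to\cc}\to \frac{s_{k-1}}{s_k},\quad P_k^{\cc\to\aa}\to -\frac{c_{k-1}}{s_k}.\]
A parallel computation using $\sum_k s_k(x_k-x_{k-1})=\sum_j c_jx_j$ and the partial-sum identity $\sum_{k<l}s_k(x_k-x_{k-1})=\sum_{j<l}c_jx_j+s_l x_{l-1}$ shows that the $\psi$-product along the switch-$l$ path converges to $G_l\cdot e^{(A-1)s_lx_{l-1}/(A+1)}$, where $G_l:=\exp\bigl(-\tfrac{1}{1+A}(\sum_{k<l}c_kx_k+A\sum_{k\geq l}c_kx_k)\bigr)$.

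\textbf{Assembly, and main obstacle.} The $\cc$-to-$\cc$ transitions telescope: $\prod_{k=2}^{l-1}(s_{k-1}/s_k)=s_1/s_{l-1}$. Hence the switch-$l$ path contributes to $\lim_{n\to\infty}H_n$ the value $-\fc_0 G_1/s_1$ when $l=1$, $+\fc_0 G_{d+1}/s_d$ when $l=d+1$, and $-\fc_0\frac{c_{l-1}}{s_{l-1}s_l}G_l\,e^{(A-1)s_lx_{l-1}/(A+1)}$ for $2\leq l\leq d$. Writing $c_{l-1}/(s_{l-1}s_l)=1/s_l-1/s_{l-1}$ and using the identity $G_l\,e^{(A-1)s_lx/(A+1)}=G_{l+1}\,e^{(A-1)s_{l+1}x/(A+1)}$ (which follows from $G_l/G_{l+1}=\exp(-(A-1)c_lx_l/(A+1))$), a single reindexing $l\mapsto l+1$ combines the three types of terms into $\fc_0\sum_{l=1}^d\frac{G_l}{s_l}\bigl(e^{(A-1)s_lx_l/(A+1)}-e^{(A-1)s_lx_{l-1}/(A+1)}\bigr)$, matching \eqref{eq:limit of Hn suffices to prove}. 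The hard part will be the $q$-Pochhammer asymptotic: one has to pinpoint exactly which factors vanish on the coexistence line and verify that the surviving factors cancel cleanly between numerator and denominator, producing the tidy ratios above. A secondary, but more mechanical, difficulty is the telescoping bookkeeping in the final reassembly step.
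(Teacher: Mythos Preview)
Your proposal is correct and follows essentially the same route as the paper: localize onto the top-atom ``switch'' paths $\cc^{l-1}\aa^{d-l+1}$ using Lemma~\ref{lem:property on support} and the total-variation bound of Lemma~\ref{lem:corA}, extract the $\pm\fc_0 n/s$ behavior of $p_0^{\aa}$, $p_0^{\cc}$ and the ratios $s_{k-1}/s_k$, $-c_{k-1}/s_k$ for $P^{\cc\to\cc}$, $P^{\cc\to\aa}$, evaluate $\psi$ exactly at the two top atoms, and telescope. The only cosmetic differences are your indexing convention ($l\in\{1,\dots,d+1\}$ versus the paper's $l\in\{0,\dots,d\}$) and your slightly more algebraic telescoping via the identity $G_l e^{(A-1)s_lx_l/(A+1)}=G_{l+1}e^{(A-1)s_{l+1}x_l/(A+1)}$, whereas the paper writes out each $H_{l,n}^1$ limit directly and sums; both arrive at the same expression.
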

\begin{proof}
We denote $y_0^\cc(t):=\frac{1}{2}\lb \frac{A}{\sqrt{t}}+\frac{\sqrt{t}}{A}\rb$ and $y_0^\aa(t):=\frac{1}{2}\lb A\sqrt{t}+\frac{1}{A\sqrt{t}}\rb$. Since $A>1$, after possibly making $\ep>0$ smaller (hence shrinking the interval $I=(1-\ep,1)$), we have $\frac{A}{\sqrt{t}}>A\sqrt{t}>1$ for  $t\in I$. Therefore for all $t\in I$, $y_0^\cc(t)$ and $y_0^\aa(t)$ are the largest and second largest atoms of $U_t$. 
Denote
  $$y_1^*(t):=\max\lb1,\frac{1}{2}\lb Aq\sqrt{t}+
\frac{1}{Aq\sqrt{t}}\rb \one_{Aq\sqrt{t}\geq1}, \frac{1}{2}\lb 
\frac{Aq}{\sqrt{t}}+\frac{\sqrt{t}}{Aq} \rb\one_{Aq/\sqrt{t}\geq1}\rb.$$
For all $t\in I$, we have $y_1^*(t)<y_0^\aa(t)<y_0^\cc(t)$, and that $\pi_t$ is supported on $U_t\subset \{y_0^\aa(t),y_0^\cc(t)\}\cup[-1,y_1^*(t)]$.   

We study the support of the multi-dimensional Askey--Wilson signed measure $\pi_{t_{1,n},\dots,t_{d,n}}$. By Lemma \ref{lem:property on support}, it is supported in   
$\left\{B^{0,n},\dots,B^{d,n}\right\}\cup V^{1,n}\cup\dots\cup V^{d,n}$, where for $0\leq l\leq d$, $B^{l,n}$ are points in $\RR^d$:
$$B^{l,n}:=\lb\underbrace{y_0^\cc\lb t_{1,n}\rb,\dots,y_0^\cc\lb t_{l,n}\rb}_{l},y_0^\aa\lb t_{l+1,n}\rb,\dots,y_0^\aa\lb t_{d,n}\rb \rb, $$
and for $1\leq l\leq d$, $V^{l,n}$ are compact subsets of $\RR^d$:
$$
V^{l,n}:=\prod_{i=1}^{\substack{l-1\\\longrightarrow}}\left\{y_0^\aa\lb t_{i,n}\rb,y_0^\cc\lb t_{i,n}\rb\right\}\times\le-1,y_1^*\lb t_{l,n}\rb\re\times
\prod_{i=l+1}^{\substack{d\\\longrightarrow}}\le-1,y_0^\aa\lb t_{i,n}\rb\re.
$$ 
Since the support of $\pi_{t_{1,n},\dots,t_{d,n}}$ is the union of $2d+1$ disjoint sets $\left\{B^{0,n}\},\dots,\{B^{d,n}\right\}$ and $V^{1,n},\ldots,V^{d,n}$, we will split the integral $H_n$ into $2d+1$ parts 
\be \label{eq:split of Hn}
H_n=\sum_{l=0}^dH_{l,n}^1+\sum_{l=1}^dH_{l,n}^2.
\ee 
Specifically, we define
 for $0\leq l\leq d$, 
\be \label{eq:Hln first integral}
H_{l,n}^1:=\frac{1}{n}\int_{\{B_{l,n}\}}\prod_{r=1}^{d}\psi\lb s_{r,n},y_r\rb^{n_r-n_{r-1}}\pi_{t_{1,n},\dots,t_{d,n}}(\d y_1,\dots,\d y_{d}),
\ee 
and for $1\leq l\leq d$, 
\be  \label{eq:Hln second integral}
H_{l,n}^2:=\frac{1}{n}\int_{V^{l,n}}\prod_{r=1}^{d}\psi\lb s_{r,n},y_r\rb^{n_r-n_{r-1}}\pi_{t_{1,n},\dots,t_{d,n}}(\d y_1,\dots,\d y_{d}).
\ee  
As we will see below, the dominating term will come from $\sum_{l=0}^dH_{l,n}^1$. 
\paragraph{\textbf{\underline{Step 1}}} 
We first study the limit of $H_{l,n}^1$ as $n\rightarrow\infty$, for $0\leq l\leq d$. In view of \eqref{eq:Hln first integral}, one can write
\be  \label{eq:H0n first}
H_{0,n}^1=\frac{1}{n}p_0^\aa\lb t_{1,n} \rb\prod_{r=1}^{d}\psi\lb s_{r,n},y_0^\aa(t_{r,n})\rb^{n_r-n_{r-1}},
\ee  
and for $l\geq1$,
\be \label{eq:Hln first}
\begin{split}
    H_{l,n}^1=&\frac{1}{n}p_0^\cc\lb t_{1,n}\rb\prod_{r=2}^lP^{\cc,\cc}(t_{r-1,n},t_{r,n})
    P^{\cc,\aa}(t_{l,n},t_{l+1,n})\\
    &\times\prod_{r=1}^{l}\psi\lb s_{r,n},y_0^\cc(t_{r,n})\rb^{n_r-n_{r-1}}
    \prod_{r=l+1}^{d}\psi\lb s_{r,n},y_0^\aa(t_{r,n})\rb^{n_r-n_{r-1}},
\end{split}
\ee  
where we use the shorthand 
$p_0^\aa(t):=p\lb y_0^\aa(t)\rb$, $p_0^\cc(t):=p\lb y_0^\cc(t)\rb$ and
$$P^{\cc,\cc}(t,t'):=P_{t,t'}\lb y_0^\cc\lb t\rb,\{y_0^\cc\lb t'\rb\}\rb,\quad
P^{\cc,\aa}(t,t'):=P_{t,t'}\lb y_0^\cc\lb t\rb,\{y_0^\aa\lb t'\rb\}\rb,
$$
for any $t<t'$ in $I$, where we recall that $P_{t,t'}(x,\d y)$ is defined in \eqref{eq:transition AW signed}.

For any $s>s'$, we denote $s_n:=s/n$, $s_n':=s'/n$, $t_n:=e^{-s_n}$ and $t'_n:=e^{-s'_n}$. 
As $n\rightarrow\infty$, we have: 
\begin{align*}
    p_0^\aa\lb t_{n}\rb&=\frac{\lb1/(A^2t_n),AB,BD,AD/t_n\rb_{\infty}}{\lb B/A,1/t_n,D/(At_n),A^2BD\rb_{\infty}}\sim-\fc_0\frac{n}{s},\\
    p_0^\cc\lb t_{n}\rb&=\frac{\lb t_n/A^2,ABt_n,BD,AD\rb_{\infty}}{\lb Bt_n/A,t_n,D/A,A^2BD\rb_{\infty}}\sim\fc_0\frac{n}{s},\\
    P^{\cc,\cc}(t_n,t_n')&=\frac{\lb t_n'/A^2,ABt_n',Bt_n/A,t_n \rb_{\infty}}{\lb Bt_n'/A,t_n',t_n/A^2,ABt_n\rb_{\infty}}\sim\frac{(t_n)_\infty}{(t_n')_\infty}\sim\frac{s}{s'}\\
    P^{\cc,\aa}(t_n,t_n')&=\frac{\lb 1/(A^2t_n'),AB,Bt_n/A,t_n/t_n' \rb_\infty}{\lb B/A,1/t_n',t_n/(A^2t_n'),ABt_n \rb_\infty}\sim\frac{(t_n/t_n')_\infty}{(1/t_n')_\infty}\sim\frac{s'-s}{s'}.
\end{align*}
We use the Taylor expansion of $e^{-x}$ for $x=s/n$:
\begin{align*}
    \psi\lb s_n,y_0^\aa(t_n)\rb&=\frac{1+At_n}{1+A}=1-\frac{1}{n}\frac{sA}{1+A}+O\lb\frac{1}{n^2}\rb, \\
    \psi\lb s_n,y_0^\cc(t_n)\rb&=\frac{t_n+A}{1+A}=1-\frac{1}{n}\frac{s}{1+A}+O\lb\frac{1}{n^2}\rb.
\end{align*}  
Putting them into \eqref{eq:H0n first} and \eqref{eq:Hln first}, we get, as $n\rightarrow\infty$,
\begin{align*}
    H_{0,n}^1&\sim\fc_0\lb-\frac{1}{s_1}\rb\prod_{r=1}^d\exp\lb{-\frac{s_r A\Delta x_r}{1+A}}\rb,\\
    H_{l,n}^1&\sim\fc_0\lb\frac{1}{s_l}-\frac{1}{s_{l+1}}\rb\prod_{r=1}^l \exp\lb{-\frac{s_r\Delta x_r}{1+A}}\rb
\prod_{r=l+1}^d \exp\lb{-\frac{s_r A\Delta x_r}{1+A}}\rb,\quad 1\leq l\leq d-1,\\
H_{d,n}^{1}&\sim\fc_0\frac{1}{s_d}\prod_{r=1}^d \exp\lb{-\frac{s_r\Delta x_r}{1+A}}\rb,
\end{align*} 
where we denote $\Delta x_r=x_r-x_{r-1}$ for $1\leq r\leq d$.
Summing them up, by telescoping,
\be \label{eq:sum of H1}
\begin{split}
\lim_{n\rightarrow\infty}\sum_{l=0}^dH_{l,n}^1&=\fc_0 \sum_{l=1}^d\frac{1}{s_l} \exp\lb{ -\sum_{k=1}^{l-1}\frac{s_k\Delta x_k}{1+A}-\sum_{k=l+1}^d\frac{s_k A\Delta x_k}{1+A} }\rb\lb \exp\lb{-\frac{s_l\Delta x_l}{1+A}}\rb-\exp\lb{-\frac{s_l A\Delta x_l}{1+A}}\rb\rb\\
&=\fc_0 \sum_{l=1}^d\,\frac{1}{s_l}\,\exp\lb{-\frac{1}{A+1}\,\left(\sum_{k=1}^{l-1}\,c_kx_k+A\,\sum_{k=l}^d\,c_kx_k\right)}\rb\,\left(\exp\lb{\frac{A-1}{A+1}\,s_l x_l}\rb-\exp\lb{\frac{A-1}{A+1}\,s_l x_{l-1}}\rb\right).
\end{split}
\ee 
\paragraph{\textbf{\underline{Step 2}}} 
We next study the limit of $H_{l,n}^2$ as $n\rightarrow\infty$, for $1\leq l\leq d$.
Using Lemma \ref{lem:corA} for $\theta=1$, we see that the total variation of signed measure
$\pi_{t_{1,n},\dots,t_{d,n}}$ is bounded above by $Kn^{2d}$. Therefore, in view of the fact that $\psi\lb s,y \rb$ is increasing in $y$, 
\begin{equation}\label{eq:bound of Hln2}
    \begin{split}
        H_{l,n}^2&=\frac{1}{n}\int_{V^{l,n}}\prod_{r=1}^{d}\psi\lb s_{r,n},y_r\rb^{n_r-n_{r-1}}\pi_{t_{1,n},\dots,t_{d,n}}(\d y_1,\dots,\d y_{d})\\
        &\leq \frac{1}{n}(Kn^{2d})\prod_{r=1}^{l-1}\psi\lb s_{r,n},y_0^\cc(t_{r,n})\rb^{n_r-n_{r-1}}
        \psi\lb s_{l,n},y_1^*(t_{l,n})\rb^{n_l-n_{l-1}}
        \prod_{r=l+1}^{d}\psi\lb s_{r,n},y_0^\cc(t_{r,n})\rb^{n_r-n_{r-1}}\\
        &\leq  Kn^{2d-1}\psi\lb s_{l,n},y_1^*(t_{l,n})\rb^{n_l-n_{l-1}}
    \end{split}
\end{equation}
where we have used $\psi\lb s_{r,n},y_0^\cc(t_{r,n})\rb=\frac{t_{r,n}+A}{1+A}<1$ for all $1\leq r\leq d$. Observe that,
$$\lim_{n\rightarrow\infty}\psi\lb s_{l,n},y_1^*(t_{l,n})\rb
=\frac{A}{(1+A)^2}\max\lb4,\frac{(1+Aq)^2}{Aq}\one_{Aq\geq 1}\rb<1.$$
Note that the difference $n_{l}-n_{l-1}$ is of order $n$.
In view of \eqref{eq:bound of Hln2}, we have $\lim_{n\rightarrow\infty}H_{l,n}^2=0$ for $1\leq l\leq d$.

\paragraph{\textbf{\underline{Summary of the proof}}} 
In view of \eqref{eq:split of Hn}, by \eqref{eq:sum of H1} and $\lim_{n\rightarrow\infty}H_{l,n}^2=0$ for $1\leq l\leq d$, we conclude the proof of equation \eqref{eq:limit of Hn suffices to prove}.
This finishes the proof of Lemma \ref{lem:5.3} and hence also the proof of Theorem \ref{thm:density profile coexistence line}.
\end{proof}

Finally, we give the deferred proof of  Lemma \ref{lem:asymptotics of Zn coexistence} giving   asymptotics of  partition function $Z_n=\Pi_n(1,\dots,1)$.
\begin{proof}[Proof of Lemma \ref{lem:asymptotics of Zn coexistence}]
Note that the Askey--Wilson signed measure $\pi_t(\d x)$ is defined  for  $t\in I=(1-\ep,1)$. We define
$Z_n(t):=\Pi_n(t,\dots,t)$, which is a polynomial in $t$. We have $Z_n=Z_n(1)=\lim_{t\rightarrow 1^-}Z_n(t)$, and for $t\in I$,
\be\label{eq:Zn(t)}Z_n(t)=\int_\RR\lb 1+t+2\sqrt{t}x\rb^n\pi_t(\d x).\ee
Recall that $A>1$. Assume $m$ is the largest integer such that $Aq^m\geq 1$.
If $A>q^{-m}$, then for $t\rightarrow1^-$, $U_t$ has atoms $y_j^\cc(t):=\frac{1}{2}\lb \frac{Aq^j}{\sqrt{t}}+\frac{\sqrt{t}}{Aq^j}\rb$ and $y_j^\aa(t):=\frac{1}{2}\lb Aq^j\sqrt{t}+\frac{1}{Aq^j\sqrt{t}}\rb$ for $0\leq j\leq m$. If $A=q^{-m}$, then  for $t\rightarrow1^-$, $U_t$ has atoms $y_j^\cc(t)$ for $0\leq j\leq m$ and $y_j^\aa(t)$ for $0\leq j\leq m-1$. The atom masses are
\begin{align}\label{eq:masses}
    p_j^\cc(t)&=\frac{\lb t/A^2,ABt,BD,AD \rb_\infty}{\lb Bt/A,t,D/A,A^2BD \rb_\infty}
\frac{q^j\lb1-A^2q^{2j}/t\rb\lb A^2/t,AB,A^2,AD/t\rb_j}{(q)_j(1-A^2/t)(A^2/t)^j\prod_{l=1}^j\lb(Bt/A-q^l)(t-q^l)(D/A-q^l)\rb},\\
p_j^\aa(t)&=\frac{\lb 1/(A^2t),AB,BD,AD/t \rb_\infty}{\lb B/A,1/t,D/(At),A^2BD \rb_\infty}
\frac{q^j(1-A^2q^{2j}t)\lb A^2t,ABt,A^2,AD\rb_j}{(q)_j(1-A^2t)(A^2t)^{2j}\prod_{l=1}^j\lb(B/A-q^l)(1/t-q^l)(D/(At)-q^l)\rb}.\label{eq:masses2}
\end{align} 
The equations above give finite numbers since the Askey--Wilson signed measures are well-defined. 

We now analyze the limits of the atoms as $t\rightarrow 1^-$.
For each  $j\in\NN$, if the atoms corresponding to $y_j^\cc(t)$ and $y_j^\aa(t)$ both exist, then  their positions converge to the same limit $y_j(1):=\frac{1}{2}\lb Aq^j+\frac{1}{Aq^j}\rb$. 
We next consider the limits as $t\rightarrow 1^-$ for their masses $\{p_j^\cc(t),p_j^\aa(t)\}$ given by \eqref{eq:masses} and \eqref{eq:masses2} above. We observe that the numerators converge to the same nonzero finite value. In the denominators, except for the term $1-t$ coming from $(t)_{\infty}$ in $p_j^\cc(t)$ and the term $1-1/t$ coming from $(1/t)_{\infty}$ in $p_j^\aa(t)$, all other terms converge to nonzero finite values, and the limits of terms in $p_j^\cc(t)$ are equal to their counterparts in $p_j^\aa(t)$.  In summary, we conclude that as $t\rightarrow 1^-$, one of the masses in $\{p_j^\cc(t),p_j^\aa(t)\}$ approaches $+\infty$ while the other approaches $-\infty$, and both of them approach $\pm\infty$ as constant times $ \frac{1}{1-t}$. 
In the special case $A=q^{-m}$, the atom corresponding to $y_m^\cc(t)$ exists, but the atom corresponding to $y_m^\aa(t)$ does not exist. By a similar observation, $p_m^\cc(t)$ approaches a finite constant as $t\rightarrow 1^-$.

One can write $Z_n(t)=Z_{n,\cont}(t)+\sum_{j=0}^mZ_{n,j}(t)$, where 
$$
Z_{n,j}(t)=\int_{\{y_j^\cc(t),y_j^\aa(t)\}}\lb 1+t+2\sqrt{t}x\rb^n\pi_t(\d x),\quad
Z_{n,\cont}(t)=\int_{-1}^1\lb 1+t+2\sqrt{t}x\rb^n\pi_t(\d x).
$$
In the special case $A=q^{-m}$ and $j=m$, the atom corresponding to $y_m^\aa(t)$ does not exist, and $Z_{n,m}(t)$ is the integral on the single atom $\{y^\cc_{m}(t)\}$. 

We will show below that as $t\rightarrow 1^-$, $Z_{n,\cont}(t)$ and $Z_{n,j}(t)$ for $0\leq j\leq m$ converge to finite numbers, which we denote by $Z_{n,\cont}$ and $Z_{n,j}$. Therefore
we have 
\be  \label{eq:decomposition of Zn}
Z_n=Z_{n,\cont}+\sum_{j=0}^mZ_{n,j}.
\ee  

\paragraph{\textbf{\underline{Step 1}}} We first study $Z_{n,0}$. For $t\in I$, one can write:
\begin{equation*}
    \begin{split}
        Z_{n,0}(t)
        =&\lb1+t+2\sqrt{t}y_0^\aa(t)\rb^np_0^\aa(t)+\lb1+t+2\sqrt{t}y_0^\cc(t)\rb^np_0^\cc(t)\\
        =&(1+A)^n\lb t+\frac{1}{A}\rb^n\frac{\lb \frac{1}{A^2t},AB,BD,\frac{AD}{t} \rb_\infty}{\lb \frac{B}{A},\frac{q}{t},\frac{D}{At},A^2BD \rb_\infty}\frac{t}{t-1}
        +(t+A)^n\frac{(1+A)^n}{A^n}\frac{\lb \frac{t}{A^2},ABt,BD,AD \rb_\infty}{\lb \frac{Bt}{A},qt,\frac{D}{A},A^2BD \rb_\infty}\frac{1}{1-t}\\
        =&\frac{(1+A)^n}{A^n}\frac{1}{t-1}\le(At+1)^nt\frac{\lb \frac{1}{A^2t},AB,BD,\frac{AD}{t} \rb_\infty}{\lb \frac{B}{A},\frac{q}{t},\frac{D}{At},A^2BD \rb_\infty}
        -(t+A)^n\frac{\lb \frac{t}{A^2},ABt,BD,AD \rb_\infty}{\lb \frac{Bt}{A},qt,\frac{D}{A},A^2BD \rb_\infty}\re.
    \end{split}
\end{equation*}
By the L'Hospital rule, we have 
\begin{equation*}
    \begin{split}
        Z_{n,0}=\lim_{t\rightarrow 1^-}Z_{n,0}(t)=&\frac{(1+A)^n}{A^n}(1+A)^n\fc_0\left[
        \frac{nA}{A+1}+1-\frac{n}{A+1}+\pone\log\lb\frac{1}{A^2t}\rb_{\infty}+\pone\log\lb\frac{AD}{t}\rb_{\infty} \right.\\
        &+\pone\log\lb\frac{1}{\lb\frac{q}{t}\rb_{\infty}}\rb+\pone\log\lb\frac{1}{\lb\frac{D}{At}\rb_{\infty}}\rb - \pone\log\lb\frac{t}{A^2}\rb_\infty- \pone\log\lb ABt\rb_\infty\\
        &\left.- \pone\log\lb\frac{1}{\lb\frac{Bt}{A}\rb_\infty}\rb- \pone\log\lb\frac{1}{\lb qt\rb_\infty}\rb\right]. 
    \end{split}
\end{equation*} 
Therefore as $n\rightarrow\infty$, $$Z_{n,0}\sim\fc_0\frac{A-1}{A+1}n\frac{(1+A)^{2n}}{A^n},$$
where we recall that $$\fc_0:= \frac{(A^{-2},AB,BD,AD)_\infty}{(B/A,q,D/A,A^2BD)_\infty}.$$

\paragraph{\textbf{\underline{Step 2}}} Next, for $j\geq 1$, we study  $Z_{n,j}$. Recall that $y_j(1):=y_j^\cc(1)=y_j^\aa(1)=\frac{1}{2}\lb Aq^j+\frac{1}{Aq^j}\rb$.

We first take care of the special case $A=q^{-m}$ and $j=m$. We have
$Z_{n,m}(t)=\lb1+t+2\sqrt{t}y_m^\cc(t)\rb^np_m^\cc(t).$
As $t\rightarrow1^-$, $p_m^\cc(t)$ approaches a finite constant, hence
$$Z_{n,j}=\lim_{t\rightarrow1^-}Z_{n,m}(t)=\lb2+2y_m(1)\rb^n\lim_{t\rightarrow1^-}p_m^\cc(t),$$
which, after divided by 
$\frac{(1+A)^{2n}}{A^n}n=(2+2y_0(1))^nn$, converges to $0$ as $n\rightarrow\infty$.

In other cases, the atoms corresponding to $y_j^\cc(t)$ and $y_j^\aa(t)$ both exist, and 
$$Z_{n,j}(t)=\lb1+t+2\sqrt{t}y_j^\aa(t)\rb^np_j^\aa(t)+\lb1+t+2\sqrt{t}y_j^\cc(t)\rb^np_j^\cc(t).$$
One can observe that, both $p_j^\aa(t)(1-t)$ and $p_j^\cc(t)(1-t)$ can be extended to analytic functions for $t$ in a small neighborhood around $1$. 
By L'Hospital rule, one can take the limit:
$$Z_{n,j}=\lim_{t\rightarrow1^-}Z_{n,j}(t)=-\pone\lb
\lb1+t+2\sqrt{t}y_j^\aa(t)\rb^np_j^\aa(t)(1-t)+\lb1+t+2\sqrt{t}y_j^\cc(t)\rb^np_j^\cc(t)(1-t)
\rb.$$
Therefore, as $n\rightarrow\infty$, 
$$Z_{n,j}\sim\text{Const}\times(2+2y_j(1))^nn,$$ 
which, after divided by 
$\frac{(1+A)^{2n}}{A^n}n=(2+2y_0(1))^nn$, converges to $0$ since $y_0(1)>y_j(1)$ for $j\geq 1$ and such that $Aq^j\ge 1$.

\paragraph{\textbf{\underline{Step 3}}} 
Last, we study $Z_{n,\cont}$.  
The continuous density \eqref{eq:continuous part density} equals
$$f(t;x):=
\frac{(q, ABt, A^2, AD, AB, BD, AD/t)_{\infty}}{2\pi
(A^2BD)_{\infty}\sqrt{1-x^2}} \biggl|\frac{(e^{2\i\theta
})_{\infty}}{(A\sqrt{t}e^{\i\theta}, B\sqrt{t}e^{\i\theta}, Ae^{\i\theta}/\sqrt{t},
De^{\i\theta}/\sqrt{t})_{\infty}} \biggr|^2\one_{|x|<1}.$$ 
If $A=q^{-r}$ for some $r\in\NN$, the continuous density always equals $0$ due to the term $(A^2)_{\infty}$ in the numerator, hence $Z_{n,\cont}(t)=0$. In other cases, for $t$ in a small interval around $1$, since the linear terms in the denominator are uniformly bounded away from $0$, one can bound $|f(t,x)|$ above by a finite uniform constant. Hence 
$Z_{n,\cont}=\lim_{t\rightarrow1^-}Z_{n,\cont}(t)\leq\text{Const}\times4^n,$
which, after divided by 
$\frac{(1+A)^{2n}}{A^n}n>4^nn$, converges to $0$.
\paragraph{\textbf{\underline{Summary of the proof}}} 
In Steps 1-3, we have proved that, as $n\rightarrow\infty$,
$Z_{n,0}\sim\frac{(1+A)^{2n}}{A^n}n\frac{A-1}{A+1}\fc_0$,
and both $Z_{n,j}$ for $j\geq 1$ and $Z_{n,\cont}$ converges to $0$ as $n\rightarrow\infty$, after divided by 
$\frac{(1+A)^{2n}}{A^n}n.$
Hence, by \eqref{eq:decomposition of Zn}, we have 
$Z_n\sim\frac{(1+A)^{2n}}{A^n}n\frac{A-1}{A+1}\fc_0$.
\end{proof}

\appendix
\section{Total variation bounds of Askey--Wilson signed measures}
\label{sec:total variation bounds}

In this appendix we prove a technical total variation bound for the Askey--Wilson signed measures. Although we state and prove the results for general $\al>0$, we will only need the case $\al=1/2$ in Section \ref{sec:limit fluctuation} and $\al=1$ in Section \ref{sec:coexistence line}.
\begin{proposition}
    \label{prop:total variation}
    Assume $A,C\geq0$, $B,D\in(-1,0]$ and $ABCD\notin\{q^{-l}:l\in\NN\}$. Assume $s>s'>0$,
    $s\neq 2s'$, and $\al>0$. 
    We denote $t_n:=e^{-s/\nal}$ and $t'_n:=e^{-s'/\nal}$ for any $n\in\mathbb{N}_+$. 
    Then there exists $N$ and $K$ depending only on $A,B,C,D,s,s'$, such that for any $n\geq N$, the total variations of $\pi_{t_n}(\d x)$ and of $P_{t_n,t_n'}(x,\d y)$ for any $x\in U_{t_n}$ are both bounded above by $Kn^{2\al}$. 
\end{proposition}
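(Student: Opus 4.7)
The plan is to bound the total variation by combining the continuous--part integral $\int_{-1}^{1}|f(x;a,b,c,d)|\,dx$ with the finite sum $\sum|p(x)|$ of absolute atom masses, using the explicit formulas in Definition \ref{def:definition of Askey--Wilson measure}. The task then reduces to estimating $q$-Pochhammer symbols whose arguments depend on $t_n$ or $t_n/t_n'$ as $n\to\infty$, and tracking the balance between vanishing numerator and denominator factors.

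For $\pi_{t_n}$, whose parameters $(a_n,b_n,c_n,d_n)=(A\sqrt{t_n},B\sqrt{t_n},C/\sqrt{t_n},D/\sqrt{t_n})$ converge to $(A,B,C,D)$, Remark \ref{rmk:total variation uniformly bounded compact} immediately yields a uniform $O(1)$ bound whenever the limit lies in $\Omega$. Otherwise the only condition of Definition \ref{def:region omega} that can fail at the limit is (3), since the assumption $ABCD\notin\{q^{-l}\}$ rules out (2) and (4) is structural; this forces $A/C=q^{l}$ for some $l\in\mathbb{Z}$ with $A,C\geq 1$. In that case a single $q$-Pochhammer factor in the denominator of each candidate atom mass $p_{j}^{\eee}$ (namely the one built from $c/a=C/(At_n)$) carries a vanishing term $1-q^{l+j}/t_n$ of size $\asymp n^{-\al}$, while all other factors stay bounded away from zero; the continuous density admits an integrable majorant independent of $n$, exactly as in the proof of Theorem \ref{thm:extending orthogonality}. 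Summing the finitely many atoms yields $\mathrm{TV}(\pi_{t_n})=O(n^{\al})\le O(n^{2\al})$.

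For $P_{t_n,t_n'}(x,\cdot)$ with parameters $a=A\sqrt{t_n'}$, $b=B\sqrt{t_n'}$, $c=\sqrt{t_n/t_n'}(x+\sqrt{x^{2}-1})$, $d=\sqrt{t_n/t_n'}(x-\sqrt{x^{2}-1})$, I would split into cases following Lemma \ref{lem:support of signed measure Pst}. If $x\in[-1,1]$ then $c,d$ are complex conjugates of modulus less than $1$, only $a$ can generate atoms, and the continuous density picks up a vanishing numerator factor $(cd)_\infty=(t_n/t_n')_\infty$ that neutralises any incipient denominator singularity. If $x=y_{0}^{\aa}(t_n)$ then Lemma \ref{lem:property on support} gives total variation equal to $1$. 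Otherwise $x=y_{j}^{\cc}(t_n)$ or $y_{j}^{\aa}(t_n)$ for some $j\geq 0$; the explicit forms of $c,d$ show that each potentially singular denominator factor $(c/a)_\infty$, $(a/c)_\infty$, or $(d/a)_\infty$ is matched, term by term, by a vanishing numerator factor $(cd)_\infty=(t_n/t_n')_\infty$, $(ad)_\infty$, or $(bc)_\infty$ forced by the same parameter coincidence. The hypothesis $s\neq 2s'$ is exactly what excludes the extra alignment $t_n/t_n'=t_n'$, which would otherwise allow two denominator factors to vanish simultaneously while only one numerator factor compensates; under the stated hypotheses the net imbalance is at most two vanishing factors each of size $\asymp n^{-\al}$, giving the claimed $n^{2\al}$ bound.

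The main obstacle is the bookkeeping in the third case: one must enumerate every Pochhammer factor appearing in $f(y;a,b,c,d)$ and in each $p_{j}^{\eee}$ for $\eee\in\{\aa,\bb,\cc,\dd\}$, separate those that stay bounded away from zero from those that collapse as $t_n,t_n'\to 1$, and verify that the numerator--denominator accounting never leaves a worse than $O(n^{2\al})$ residue. The role of $s\neq 2s'$ is precisely to rule out the unique extra degeneracy in that accounting, and once this is checked, the multi-time bound on $\pi_{t_{1,n},\ldots,t_{d,n}}$ in Lemma \ref{lem:corA} follows by multiplying the per-factor $O(n^{2\al})$ bounds.
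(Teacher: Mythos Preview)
Your plan diverges from the paper's, and the cancellation bookkeeping you propose is more delicate than necessary and contains some imprecisions.

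The paper's argument is much simpler. Since each signed measure has total mass $1$ and the continuous density \eqref{eq:continuous part density} has constant sign, it suffices to bound the absolute values of the finitely many atom masses. The numerators in \eqref{eq: p_0}--\eqref{eq: p_j} are uniformly bounded from above because the four Askey--Wilson parameters stay in a compact set; \emph{no vanishing of numerator factors is ever used}. All the work goes into bounding the denominators from below, and this is done through a single unifying criterion: for every pair of distinct $\e,\f\in\{a,c,d\}$ with $\e\geq 1$, one has $|\f/\e|=r\,e^{\epsilon/n^{\theta}}$ for some $r\geq 0$ and $\epsilon\neq 0$. Since $b\in(-1,0]$ always, the ratio $b/\e$ never approaches a nonnegative power of $q$, so at most two linear factors in the denominator of each $p_j^{\eee}$ can be small, each of size $\asymp n^{-\theta}$. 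This yields $O(n^{2\theta})$ directly, and the criterion is verified case by case for $\pi_{t_n}$ and for the three possible locations of $x\in U_{t_n}$.

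Your matching of vanishing numerator against vanishing denominator factors is not the mechanism at work, and several of your specific claims are off. In your Case~1 you invoke the continuous density when the issue is the atom masses; and $(cd)_\infty=(t_n/t_n')_\infty$ \emph{always} has a vanishing factor for $P_{t_n,t_n'}$, independent of any parameter coincidence, so it is not ``forced by the same coincidence'' that makes a given denominator factor small. More seriously, your explanation of why $s\neq 2s'$ is needed is incorrect: the danger is not a counting imbalance of vanishing factors but a loss of \emph{uniformity} in $x$. When $s=2s'$ one has $|c/a|=|d/a|=\sqrt{t_n}/(At_n')=1/A$ with no $n$-dependent perturbation; if in addition $A\in\{q^{-l}:l\in\NN\}$ then for $x=\cos\vartheta\in(-1,1)$ the complex factor $1-q^{l}c/a$ has modulus $|1-e^{i\vartheta}|$, which can be made arbitrarily small as $x\to 1$, so no polynomial-in-$n$ bound survives at all.
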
 
\begin{proof}
 Note that  $t_n<t_n'<1$ and for sufficiently large $n$, points   $t_n,  t_n'$ are in the open interval $I$ from Proposition \ref{prop:open ASEP parameters in Omega}. We also have $B\sqrt{t_n},B\sqrt{t_n'},D/\sqrt{t_n},D/\sqrt{t_n'}\in(-1,0]$, in particular $U_{t_n},U_{t'_n}\subset[-1,\infty)$. In view of \eqref{eq:mass equal 1} and Lemma \ref{lem:support of signed measure Pst}, to bound total variations of $\pi_{t_n}(\d x)$ and $P_{t_n,t_n'}(x,\d y)$, one needs to bound all the masses of the atoms. In our case all atoms are $\geq 1$. 

In view of the fact that $t_n,t_n'\in I$ as $n\rightarrow\infty$, the four entries in 
\be\label{eq:formula for marginal in proof}\pi_{t_n}(\d x)=\nu\lb\d x; A\sqrt{t_n},B\sqrt{t_n},C/\sqrt{t_n},D/\sqrt{t_n}\rb,\ee
and, for $x\in U_{t_n}$, the four entries in
\be\label{eq:formula for transition in proof}P_{t_n,t_n'}(x,\d y)=\nu\lb\d y; A\sqrt{t_n'},B\sqrt{t_n'},\sqrt{\frac{t_n}{t_n'}}\lb x+\sqrt{x^2-1}\rb,\sqrt{\frac{t_n}{t_n'}}\lb x-\sqrt{x^2-1}\rb\rb,\ee
have uniformly bounded norms. In particular, the total number of atoms is uniformly bounded.

We look at the formulas for atom masses \eqref{eq: p_0} and \eqref{eq: p_j}: For $|aq^j|\geq1$,
\be\label{eq:formulas for atom masses in proof}
\begin{split}
    p_0^\aa&=\frac{(a^{-2},bc,bd,cd)_\infty}{(b/a,c/a,d/a,abcd)_\infty},\\
    p_j^\aa&=\frac{(a^{-2},bc,bd,cd)_\infty}{(b/a,c/a,d/a,abcd)_\infty}\frac{q^j(1-a^2q^{2j})(a^2,ab,ac,ad)_j}{(q)_j(1-a^2)a^{4j}\prod_{l=1}^{j}\lb(b/a-q^l)(c/a-q^l)(d/a-q^l)\rb},\quad j\geq 1.
\end{split}
\ee 
The numerators of these masses are uniformly bounded from above, and one needs to bound the denominators away from $0$. Since $abcd$ equals either $ABCD$ (in case of $\pi_{t_n}(\d x)$) or $ABt_n$ (in case of $P_{t_n,t_n'}(x,\d y)$), it is uniformly bounded away from $\{q^{-l}: l\in\NN\}$  (for $ABCD$ it follows by the assumption and for $ABt_n$ due to the fact that $B\leq 0$). Moreover, when $|aq^j|\geq 1$ for some $j\geq 1$, the term $1-a^2$ is uniformly bounded away from $0$. 
Note that we always have $b\in(-1,0]$.
Consequently, by swapping $a$ with $\{c,d\}$, we claim that it suffices to prove the following criterion:
\be\label{eq:crit}
\text{For any two distinct }\e,\f\in\{a,c,d\}\text{ satisfying }\e\geq 1,\text{ we have }|\f/\e|=re^{\ep/\nal}\text{ for some }r\geq 0, \ep\neq 0.
\ee 
If condition \eqref{eq:crit} holds, then for any $l\in\mathbb{Z}$, 
$|1-q^l\f/\e|$ is bounded from below by a uniform positive constant if $r\neq q^{-l}$, and bounded from below by a uniform positive constant times $1/\nal$ if $r=q^{-l}$. Looking at the formulas of atom masses \eqref{eq:formulas for atom masses in proof}, in the denominator of $p_j^\e$ for $j\geq 0$ and $\e\geq 1$, most of the linear terms in the $q$-Pochhammer symbols  are bounded from below by a  positive constant, except for at most two of them that are bounded from below by a  positive constant times $1/\nal$. Hence  $|p_j^\e|$ is bounded from above by a uniform constant times $n^{2\al}$.

We now turn to the proof of condition \eqref{eq:crit}. We first consider $\pi_{t_n}(\d x)$ given by \eqref{eq:formula for marginal in proof}. We have $B\sqrt{t_n},D/\sqrt{t_n}\in(-1,0]$, and $\left|\frac{A\sqrt{t_n}}{C/\sqrt{t_n}}\right|=At_n/C=(A/C)e^{-s/\nal}$, so \eqref{eq:crit} is satisfied. 

We then consider $P_{t_n,t_n'}(x,\d y)$ given by \eqref{eq:formula for transition in proof}, for $x\in U_{t_n}$. We split it into the following cases: 

\begin{itemize}
    \item [Case 1.] $x\in[-1,1]$. Then $\sqrt{\frac{t_n}{t_n'}}\lb x+\sqrt{x^2-1}\rb$ and $\sqrt{\frac{t_n}{t_n'}}\lb x-\sqrt{x^2-1}\rb$ are complex conjugate with norm $<1$. 
    Note that $\vert\sqrt{\frac{t_n}{t_n'}}\lb x\pm\sqrt{x^2-1}\rb/\lb A\sqrt{t_n'}\rb\vert=\sqrt{t_n}/(At_n')=(1/A)e^{(s'-s/2)/\nal}$. In view of the assumption $s\neq 2s'$,  \eqref{eq:crit} is satisfied.
    \item [Case 2.] $x=\frac{1}{2}\lb q^kC/\sqrt{t_n}+(q^kC/\sqrt{t_n})^{-1}\rb$ and $q^kC/\sqrt{t_n}>1$. Then $\sqrt{\frac{t_n}{t_n'}}\lb x+\sqrt{x^2-1}\rb=Cq^k/\sqrt{t_n'}$
    and $\sqrt{\frac{t_n}{t_n'}}\lb x-\sqrt{x^2-1}\rb=t_n/(q^kC\sqrt{t_n'})$. The three ratios between $A\sqrt{t_n'}$, $Cq^k/\sqrt{t_n'}$ and $t_n/(q^kC\sqrt{t_n'})$ are respectively
    $(A/(Cq^k))e^{-s'/\nal}$, $(C^2q^{2k})e^{s/\nal}$ and $(ACq^k)e^{(s-s')/\nal}$, so \eqref{eq:crit} is satisfied.
    \item [Case 3.] $x=\frac{1}{2}\lb q^kA\sqrt{t_n} +(q^kA\sqrt{t_n})^{-1}\rb$ and $q^kA\sqrt{t_n}>1$. Then $\sqrt{\frac{t_n}{t_n'}}\lb x+\sqrt{x^2-1}\rb=(A q^k)t_n/\sqrt{t_n'}$
    and $\sqrt{\frac{t_n}{t_n'}}\lb x-\sqrt{x^2-1}\rb=1/(Aq^k\sqrt{t_n'})$. The three ratios between $A\sqrt{t_n'}$, $(A q^k)t_n/\sqrt{t_n'}$ and $1/(Aq^k\sqrt{t_n'})$ are respectively $q^{-k}e^{(s-s')/\nal}$, $A^2q^{2k}e^{-s/\nal}$ and $A^2q^ke^{-s'/\nal}$ so \eqref{eq:crit} is satisfied.
\end{itemize}
Thus the proof is concluded.
\end{proof}
 
\bibliographystyle{goodbibtexstyle}
\bibliography{AskeyWilson}

\end{document}